\author{Andrew Rajchert\\
Supervisor: Dmitry Badziahin}
\title{On the Irrationality Exponents of Mahler Numbers}
\numberwithin{equation}{chapter}
\newtheorem{Definition}[equation]{Definition}
\newtheorem{Theorem}[equation]{Theorem}
\newtheorem{Proposition}[equation]{Proposition}
\newtheorem{Lemma}[equation]{Lemma}
\newtheorem{Corollary}[equation]{Corollary}
\theoremstyle{remark}
\newcommand{\N}{\mathbb{N}}
\newcommand{\R}{\mathbb{R}}
\newcommand{\Q}{\mathbb{Q}}
\newcommand{\Z}{\mathbb{Z}}
\newcommand{\C}{\mathbb{C}}
\newcommand{\rad}{\text{rad}}
\begin{document}    

\pagenumbering{roman}

\renewcommand{\Today}{December 2021}
\maketitle          
\tableofcontents    


%
%
%
%
%
%

\newpage\setcounter{page}{1}\pagenumbering{arabic}

\chapter{Introduction} 
An irrationality exponent, sometimes called an irrationality measure, is a way of measuring how well an irrational number can be approximated by rational numbers. As $\Q$ is dense in $\R$, it is certainly possible to find arbitrarily accurate rational approximations to any given real number, however the irrationality exponent also takes into account how "complicated" the rational approximation needs to be through the size of the denominator. Additionally, the irrationality exponent takes into account how well an irrational number may be consistently approximated, so infrequent accurate approximations are discarded.

\begin{Definition}
For a fixed $\alpha \in \R$, Consider the inequality
\begin{align}
\Big|\alpha - \frac{p}{q}\Big| < \frac{1}{q^{\mu}}, \label{expo}
\end{align}

where $p, q \in \Z$ and $\mu \in \R$. Consider the set of $\mu \in \R$ such that there are infinitely many integer solutions for $p$ and $q$ to (\ref{expo}). The irrationality exponent of $\alpha$, denoted by $\mu(\alpha)$, is defined as the supremum of this set.
\end{Definition}
It is already known, by Dirichlet's approximation theorem, that for any $\alpha \in \mathbb{R}$, we have $\mu(\alpha) \geq 2$ \cite{hardy}. Furthermore, for any $\mu \in [2, \infty]$, there exists an $\alpha \in \mathbb{R}$ such that $\mu(\alpha) = \mu$ \cite{sondow}.\\
\\
The irrationality exponent is a powerful and interesting tool in number theory and the study of transcendental numbers. One early application of the irrationality exponent is the definition of the Liouville numbers, the set of irrational numbers $\alpha \in \R\setminus \Q$ such that $\mu(\alpha) = \infty$. Not only was it shown that these numbers exist, but they are also relatively easy to construct and are transcendental, thus becoming the first known examples of transcendental numbers \cite{kempner}. Over 100 years later, Roth's theorem was proven, which states that that all irrational algebraic numbers have an irrationality exponent of 2, providing a clear way to prove some numbers are transcendental, a result for which Klaus Roth won the 1958 Fields medal for discovering \cite{roth}.\\
\\
While a interesting and unique tool, the irrationality exponent of any given irrational number can be extremely difficult to compute, and almost immediately leads to open problems in mathematics. For example, as of the time of writing, the best known bounds on $\mu(\pi)$ are $2 \leq \mu(\pi) \leq 7.1032...$ \cite{pi}. In this thesis, we aim to study the irrationality exponents of a certain class of numbers, called \textit{Mahler Numbers}, as these numbers have several properties that makes it feasible to compute the irrationality exponent, despite the fact that the Mahler numbers capture a great variety of real numbers. Before we define this, we give some more notation useful for the rest of the report.

\begin{Definition}
Fix $n \in \Z$, and let $(c_k)_{k \geq -n}$ be a sequence in $\Q$. We say
\begin{align*}
f(z) = \sum_{k=-n}^\infty c_{k}z^{-k},
\end{align*}
is a formal Laurent series in $\Q$. The set of all formal Laurent series in $\Q$ is denoted $\Q((z^{-1}))$.
\end{Definition}

For our purposes, we will only be dealing with formal Laurent series over $\Q$ as opposed to any other set. If treated as a function, this is a power series of $1/z$, with some added polynomial part, hence it has some radius of convergence $\rho \in [0, \infty]$ where $f(z)$ converges in $\mathbb{C}$ for any $|z| > \rho$ and diverges for $|z| < \rho$. Although there are choices of the $c_k$ coefficients such that the function does not converge anywhere in $\C$, this only presents an issue when evaluating $f(z)$, but we may still treat it as a power series.

\begin{Definition}
For $f \in \Q((z^{-1}))$, we say $f$ is a Mahler Function if there exists some integers $n \geq 1, d \geq 2$ and polynomials $P_0, P_1, ..., P_n, Q \in \Q[z]$ such that $P_0(z)P_n(z) \neq 0$ and
\begin{align}
\sum_{i = 0}^n P_i(z)f(z^{d^i}) = Q(z), \label{oldMahler}
\end{align}
for any $z$ inside the disc of convergence of $f$. For any integer $b$ in the disc of convergence of $f$, $f(b)$ is called a Mahler number.
\end{Definition}

We aim to study a subset of the Mahler functions. More specifically, given $f \in \Q((z^{-1}))$ satisfies the functional equation,
\begin{align}
f(z) = \frac{A(z)f(z^d) + C(z)}{B(z)},\label{equation}
\end{align}
where $A(z), B(z), C(z) \in \Q[z], A(z)B(z) \neq 0$, what can we say about the irrationality exponents of the Mahler numbers associated with $f$? Can they be computed in a (relatively) fast manner, and what values do they take? This question is motivated Adamczewski and Rivoal, who ask if the irrationality exponent of an automatic number is always rational \cite{adam}. We will not define what an automatic number is, but simply state that it forms a subset of the Mahler numbers \cite{badziahin}, and thus a significant step into solving their proposed problem is achieved by investigating Mahler numbers.\\
\\
For simplicity, unless explicitly stated otherwise, the term Mahler function is used to describe any Mahler function satisfying (\ref{equation}) for the rest of this thesis, and similarly with Mahler numbers. Note that it is not guaranteed that a Mahler number is irrational. It is possible that a function of the form $P(z)/Q(z), P, Q \in \Q[z]$ satisfies (\ref{equation}), leading to all associated Mahler numbers being rational. As these are of little interest in terms of the irrationality exponent, we will only focus on irrational Mahler numbers, and thus avoid situations where a rational function may satisfy (\ref{equation}).\\
\\
The case of $C(z) = 0$ is studied in depth by Badziahin, who finds the irrationality exponent of many Mahler numbers originating from (\ref{equation}) are rational, partially answering the question posed by Adamczewski and Rivoal. Furthermore, given a Mahler function with $C(z) = 0$ and some additional minor assumptions, there is a (relatively) simple method to calculate the irrationality exponent of these Mahler numbers exactly \cite{badziahin}.\\
\\
The purpose of this report is to produce results that are analogous to that of Badziahin in this slightly more generalised form, where $C(z)$ may be non-zero. Furthermore, we cover this in a more approachable manner, including relevant background and introductory information so the work can be understood by a wider range of readers.

\chapter{Continued Fractions for Formal Laurent Series}
An essential tool for this essay is the theory of continued fractions for formal Laurent series. Given a function $f \in \Q((z^{-1}))$, it is possible to produce a sequence of rational functions that approximate $f$ very well. If $f$ is a Mahler function, we can
evaluate the rational function approximations of $f$ at an integer, which will produce a sequence of rational approximations for the associated Mahler number. This provides a way to get a lower bound on what the irrationality exponent of the Mahler number may be.\\
\\
The study of continued fractions for formal Laurent series is based on the theory of continued fractions for real numbers, a more well known area with many parallels. Traditionally continued fractions for real numbers is used to study irrationality exponents, however the functional equation (\ref{equation}) provides a helpful path when using continued fractions for formal Laurent series instead, especially when the Mahler number itself may be challenging to compute.\\
\\
It may be helpful to have some background knowledge on continued fractions for real numbers to contextualise the results of this chapter, however it is certainly not required to understand them. Before we cover continued fractions, we first study the space of formal Laurent series and introduce it as a metric space.\\
\\
We can equip the set $\Q((z^{-1}))$ with a metric $d$ to form a metric space, where the metric is defined as

\begin{align*}
d(f(z), g(z)) := \begin{cases}
0 &\text{ if $f(z) = g(z)$}\\
2^{\deg(f(z) - g(z))} &\text{ otherwise.}
\end{cases}
\end{align*}
All required conditions for a metric are simple to verify with the possible exception of the triangle inequality, which we will check. If $f(z) = g(z), f(z) = h(z) \text{ or } g(z) = h(z)$, then clearly $d(f(z), h(z)) \leq d(f(z), g(z)) + d(g(z), h(z))$. Otherwise,

\begin{align*}
d(f(z), h(z)) &= 2^{\deg(f(z)-g(z) + g(z) -h(z))}\\
&\leq 2^{\max(\deg(f(z)-g(z)), \deg(g(z)-h(z)))}\\
&\leq 2^{\deg(f(z)-g(z))} + 2^{\deg(g(z)-h(z))} = d(f(z), g(z)) + d(g(z), h(z)).
\end{align*}
It can also be shown that this metric space is complete. Indeed, let $f_n(z)$ be a Cauchy sequence in $\Q((z^{-1}))$, and consider the $z^k$ terms for some $k \in \Z$ as $n$ varies. As $f_n$ is a Cauchy sequence, there exists a $n_0 \in \N$ where for all $m, n \geq n_0$, $d(f_n(z), f_m(z)) < 2^k$. This implies $\deg(f_{n_0}(z) - f_n(z)) < k$ for all $n \geq n_0$, and hence the $x^k$ coefficient is eventually constant.\\
\\
Set $f(z)$ to be the formal Laurent series such that for each $k \in \Z$, the $x^k$ coefficient is the constant eventually reached by the $x^k$ terms in the $f_n(z)$ sequence. For any $\epsilon > 0$, fix an integer $k$ such that $k < \log_2(\epsilon)$. By construction there exists a $n_0$ where $\deg(f_n(z) - f(z)) < k$ for all $n \geq n_0$ as the coefficients of $f_n$ will eventually match those of $f$, hence $d(f_n(x), f(x)) < \epsilon$ for all $n \geq n_0$. This immediately implies $f_n \to f$ with respect to this metric, and as $f(x) \in \Q((z^{-1}))$, we have completeness of the space.\\
\\
We can now introduce continued fractions. The rest of this chapter makes use of the proofs and concepts given by Alf van der Poorten \cite{poorten}, however we present it much more rigorously with added structure similar to that of Hardy and Wright's discussion of continued fractions for real numbers \cite{hardy}.

\begin{Definition}
Let $(a_n(z))_{n \geq 0}$ be a sequence of functions in $\Q((z^{-1}))$ such that $\deg(a_n(z)) \geq 1$ for $n \geq 1$. Then for any $N \geq 0$,
\begin{align*}
[a_0(z), a_1(z), a_2(z), ..., a_N(z)] := a_0(z) + \frac{1}{a_1(z) + \frac{1}{a_2(z) + \frac{1}{... + \frac{1}{a_N(z)}}}}.
\end{align*}
This is called a finite continued fraction. Assuming the right hand side converges in $\Q((z^{-1}))$ as $N \to \infty$, we may have an infinite continued fraction:

\begin{align*}
[a_0(z), a_1(z), a_2(z), ...] &:= \lim_{N \to \infty} a_0(z) + \frac{1}{a_1(z) + \frac{1}{a_2(z) + \frac{1}{... + \frac{1}{a_N(z)}}}}\\
&= a_0(z) + \frac{1}{a_1(z) + \frac{1}{a_2(z) + \frac{1}{...}}}.
\end{align*}
\end{Definition}

This expression is a little cumbersome, but thankfully we can simplify it with the following proposition.

\begin{Proposition}
\label{prop1}
Let $(a_n(z))_{n \geq 0}$ be a sequence in $\Q((z^{-1}))$ such that $\deg(a_n(z)) \geq 1$ for $n \geq 1$. Let $p_0(z) = a_0(z)$, $q_0(z) = 1$, $p_1(z) = a_1(z)a_0(z) + 1$, $q_1(z) = a_1(z)$ and for $n \geq 2$, $p_n(z) = a_n(z)p_{n-1}(z) + p_{n-2}(z)$, $q_n(z) = a_n(z)q_{n-1}(z) + q_{n-2}(z)$. Then for $n \geq 0$,
\begin{align*}
[a_0(z), a_1(z), ..., a_n(z)] = \frac{p_n(z)}{q_n(z)}
\end{align*}
\end{Proposition}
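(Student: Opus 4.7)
The plan is to proceed by induction on $n$, using the standard trick that appending one more entry to a continued fraction is the same as perturbing the last entry of a shorter one.

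The base cases $n=0$ and $n=1$ are immediate from the definitions: $[a_0(z)] = a_0(z) = p_0(z)/q_0(z)$, and $[a_0(z), a_1(z)] = a_0(z) + 1/a_1(z) = (a_1(z)a_0(z) + 1)/a_1(z) = p_1(z)/q_1(z)$. For the inductive step, I would assume the claim for all continued fractions of length $n$ (for some $n \geq 1$) and establish it for length $n+1$. The core observation is the identity
\begin{align*}
[a_0(z), a_1(z), \dots, a_n(z), a_{n+1}(z)] = \bigl[a_0(z), a_1(z), \dots, a_{n-1}(z),\, a_n(z) + \tfrac{1}{a_{n+1}(z)}\bigr],
\end{align*}
which follows directly by unwinding the innermost two levels of the nested fraction.

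Before applying the induction hypothesis to the right-hand side, I need to check that the modified sequence $a_0(z), \dots, a_{n-1}(z), a_n(z) + 1/a_{n+1}(z)$ still satisfies the hypotheses of the proposition, i.e.\ lies in $\Q((z^{-1}))$ with the degree condition on entries past the first. Since $\deg(a_{n+1}(z)) \geq 1$, the reciprocal $1/a_{n+1}(z)$ is a well-defined element of $\Q((z^{-1}))$ of degree $\leq -1$, while $\deg(a_n(z)) \geq 1$, so their sum has degree $\deg(a_n(z)) \geq 1$ as required. Applying the induction hypothesis then gives
\begin{align*}
\bigl[a_0(z), \dots, a_{n-1}(z),\, a_n(z) + \tfrac{1}{a_{n+1}(z)}\bigr] = \frac{\bigl(a_n(z) + \tfrac{1}{a_{n+1}(z)}\bigr) p_{n-1}(z) + p_{n-2}(z)}{\bigl(a_n(z) + \tfrac{1}{a_{n+1}(z)}\bigr) q_{n-1}(z) + q_{n-2}(z)},
\end{align*}
and here I use the key fact that $p_{n-1}(z), p_{n-2}(z), q_{n-1}(z), q_{n-2}(z)$ are unchanged under this perturbation, since they depend only on $a_0(z), \dots, a_{n-1}(z)$.

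Multiplying numerator and denominator by $a_{n+1}(z)$ and grouping, the numerator becomes $a_{n+1}(z)\bigl(a_n(z) p_{n-1}(z) + p_{n-2}(z)\bigr) + p_{n-1}(z) = a_{n+1}(z) p_n(z) + p_{n-1}(z) = p_{n+1}(z)$ by the recurrence, and similarly the denominator becomes $q_{n+1}(z)$. This closes the induction. I expect the main subtlety to lie not in the algebra, which is routine, but in being careful with the indexing of the inductive hypothesis and in verifying that the perturbed entry $a_n(z) + 1/a_{n+1}(z)$ genuinely satisfies the standing degree assumption, so that the inductive step is legitimately an instance of the same statement.
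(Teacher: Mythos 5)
Your proof is correct and follows essentially the same route as the paper: induction via the identity $[a_0,\dots,a_n,a_{n+1}] = [a_0,\dots,a_{n-1},a_n + 1/a_{n+1}]$, together with the observation that $p_{n-1},p_{n-2},q_{n-1},q_{n-2}$ are unaffected by perturbing the last entry (and you rightly check the degree condition on the perturbed entry, as the paper does). The only small indexing point is that your inductive step applied at $n=1$ would invoke $p_{-1}(z),q_{-1}(z)$, which are not defined by the recurrence; the paper sidesteps this by verifying $n=0,1,2$ directly and inducting only from $n\geq 2$, and you should either do the same or adopt the convention $p_{-1}(z)=1$, $q_{-1}(z)=0$.
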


\begin{proof}
We verify this by induction. The result is obvious for $n = 0, 1$ and 2 by basic algebraic manipulations, so assume it holds for some fixed $n \geq 2$. We now try to prove it for $n+1$.\\
\\
Consider a slightly different sequence $(a'_k(z))_{k \geq 0}$ where $a'_k(z) = a_k(z)$ for $k \neq n$, and $a'_n(z) = a_n(z) + \frac{1}{a_{n+1}(z)}$. As $\deg(a_k(z)) \geq 1$ for $k \geq 1$, we have $\deg(a'_k(z)) \geq 1$ for all $k \geq 1$ as well, thus the continued fraction involving these terms is well defined.\\
\\
Let $p'_k(z)$ and $q'_k(z)$ be defined in a similar manner to $p_k(z)$ and $q_k(z)$, except based off the sequence $(a'_k(z))_{k \geq 0}$. As $p'_{n-1}(z), p'_{n-2}(z), q'_{n-1}(z)$ and $q'_{n-2}(z)$ depend only on $a'_k(z)$ for $k \leq n-1$, which are identical to the $a_k(z)$ for $k \leq n-1$, we must have $p'_{n-1}(z) = p_{n-1}(z)$ and similarly for $p'_{n-2}(z), q'_{n-1}(z)$ and $q'_{n-2}(z)$. We can apply our inductive hypothesis on the continued fraction of $a'_k(z)$ to find

\begin{align*}
[a_0(z), a_1(z), ..., a_n(z), a_{n+1}(z)] &= [a_0(z), a_1(z), ..., a_n(z) + \frac{1 }{a_{n+1}(z)}]\\
&= [a'_0(z), a'_1(z), ..., a'_n(z)]\\
&= \frac{p'_n(z)}{q'_n(z)}\\
&= \frac{a'_n(z)p'_{n-1}(z) + p'_{n-2}(z)}{a'_n(z)p'_{n-1}(z) + p'_{n-2}(z)}\\
&= \frac{(a_n(z) + \frac{1}{a_{n+1}(z)})p_{n-1}(z) + p_{n-2}(z)}{(a_n(z) + \frac{1}{a_{n+1}(z)})q_{n-1}(z) + q_{n-2}(z)}\\
&= \frac{a_{n+1}(z)(a_n(z)p_{n-1}(z) + p_{n-2}(z)) + p_{n-1}(z)}{a_{n+1}(z)(a_n(z)q_{n-1}(z) + q_{n-2}(z)) + q_{n-1}(z)}\\
&= \frac{a_{n+1}(z)p_n(z) + p_{n-1}(z)}{a_{n+1}(z)q_n(z) + q_{n-1}(z)} = \frac{p_{n+1}(z)}{q_{n+1}(z)}.
\end{align*}
This completes the inductive step, hence the proposition is true for all $n \geq 0$.\\
\\
\end{proof}

The elements of the sequence $(p_n(z)/q_n(z))_{n \geq 0}$ are called \textit{convergents}, and are much easier to deal with than the continued fraction itself. As the name suggests, $p_n(z)/q_n(z)$ converges in $\Q((z^{-1}))$ as $n \to \infty$, which we will show momentarily.\\
\\
Also note that if $(a_n(z))_{n \geq 0}$ are all polynomials in $\Q[z]$, we have $p_n(z)$ and $q_n(z)$ being polynomials and hence $p_n(z)/q_n(z)$ is a rational function, thus we have a sequence of rational functions converging to some function in $\Q((z^{-1}))$, exactly what we desire in our overarching plan of approximating Mahler functions. 

\begin{Proposition}
\label{prop2}
Let $(a_n(z))_{n \geq 0}$ be a sequence of functions in $\Q((z^{-1}))$ such that $\deg(a_n(z)) \geq 1$ for $n \geq 1$, and let $(p_n(z)/q_n(z))_{n \geq 1}$ be the convergents formed by the $(a_n(z))_{n \geq 0}$ sequence. Then $p_n(z)/q_n(z)$ converges to some $f \in \Q((z^{-1}))$ as $n \to \infty$.
\end{Proposition}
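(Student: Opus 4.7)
The plan is to show that the convergents form a Cauchy sequence and invoke completeness of $\Q((z^{-1}))$ established just above. The engine is the usual determinant identity for continued fractions, adapted to the Laurent series setting.

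First I would prove by induction on $n$ the identity
\begin{align*}
p_n(z)q_{n-1}(z) - p_{n-1}(z)q_n(z) = (-1)^{n-1}.
\end{align*}
The base case $n = 1$ is immediate from $p_1 q_0 - p_0 q_1 = (a_1 a_0 + 1) - a_0 a_1 = 1$. For the inductive step, using the recurrences $p_n = a_n p_{n-1} + p_{n-2}$ and $q_n = a_n q_{n-1} + q_{n-2}$, the $a_n$ terms cancel and we obtain $-(p_{n-1}q_{n-2} - p_{n-2}q_{n-1}) = -(-1)^{n-2} = (-1)^{n-1}$. Dividing through gives
\begin{align*}
\frac{p_n(z)}{q_n(z)} - \frac{p_{n-1}(z)}{q_{n-1}(z)} = \frac{(-1)^{n-1}}{q_n(z)q_{n-1}(z)}.
\end{align*}

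Next I would control $\deg(q_n(z))$. Since $\deg(a_n(z)) \geq 1$ for $n \geq 1$, a short induction on the recurrence $q_n = a_n q_{n-1} + q_{n-2}$ shows that $\deg(q_n) = \deg(a_n) + \deg(q_{n-1}) > \deg(q_{n-2})$, so that the leading term cannot cancel, and in particular $\deg(q_n(z)) \geq n$. Consequently
\begin{align*}
d\Bigl(\tfrac{p_n(z)}{q_n(z)}, \tfrac{p_{n-1}(z)}{q_{n-1}(z)}\Bigr) = 2^{-\deg(q_n q_{n-1})} \leq 2^{-(2n-1)},
\end{align*}
which tends to $0$ as $n \to \infty$.

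Finally I would observe that the metric $d$ is ultrametric: since $\deg(f+g) \leq \max(\deg f, \deg g)$, the triangle inequality strengthens to $d(f,h) \leq \max(d(f,g), d(g,h))$. In an ultrametric space a sequence is Cauchy as soon as the distances between consecutive terms tend to zero, which we have just shown. Hence $(p_n(z)/q_n(z))_{n \geq 1}$ is Cauchy, and by the completeness of $\Q((z^{-1}))$ proved earlier in this chapter it converges to some $f \in \Q((z^{-1}))$.

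I expect the only subtlety to be the degree estimate for $q_n$: one needs to verify that $a_n q_{n-1}$ strictly dominates $q_{n-2}$ in degree so that no leading-coefficient cancellation occurs in $q_n$. This follows cleanly from the hypothesis $\deg(a_n) \geq 1$ together with the induction, but it is the step most likely to be glossed over. Everything else is formal manipulation of the recurrences and a direct appeal to the completeness result.
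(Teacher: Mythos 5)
Your proof is correct and rests on the same two pillars as the paper's: the determinant identity $p_n(z)q_{n-1}(z)-p_{n-1}(z)q_n(z)=(-1)^{n-1}$ and the strict growth of $\deg(q_n(z))$, followed by an appeal to the completeness of $\Q((z^{-1}))$. The one genuine difference is how Cauchy-ness is extracted: you observe that $d$ is an ultrametric, so it suffices that the consecutive distances $d\bigl(\tfrac{p_n(z)}{q_n(z)},\tfrac{p_{n-1}(z)}{q_{n-1}(z)}\bigr)=2^{-\deg(q_{n-1}(z)q_n(z))}$ tend to $0$, whereas the paper telescopes $\tfrac{p_n(z)}{q_n(z)}-\tfrac{p_m(z)}{q_m(z)}$ into the alternating sum $\sum_{k=m}^{n-1}(-1)^{k}/(q_k(z)q_{k+1}(z))$ and reads off its degree from the dominant term (which is itself an implicit use of the same ultrametric property). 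Your route is slightly cleaner for the proposition in isolation, but the paper's telescoping produces the series representation (\ref{convseries}) of the limit as a by-product, which is then reused in the proof of Lemma \ref{lemma1}; with your argument that identity would still have to be derived separately. The subtlety you flag --- that $a_n(z)q_{n-1}(z)$ strictly dominates $q_{n-2}(z)$ in degree, so no leading-term cancellation occurs in $q_n(z)$ --- is exactly the point the paper also verifies by induction, so nothing is missing.
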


\begin{proof}
As $\Q((z^{-1}))$ is a complete metric space, it is enough to show that the sequence of convergents is a Cauchy sequence. First note,
\begin{align*}
&p_n(z)q_{n-1}(z) - p_{n-1}(z)q_n(z)\\
&= (a_n(z)p_{n-1}(z) + p_{n-2}(z))q_{n-1}(z) - p_{n-1}(z)(a_n(z)q_{n-1}(z) + q_{n-2}(z))\\
&= (-1)(p_{n-1}(z)q_{n-2}(z) - p_{n-2}(z)q_{n-1}(z)).\\
\end{align*}
Also observe $p_1(z)q_0(z) - p_0(z)q_1(z) = (a_0(z)a_1(z)+1) - a_0(z)a_1(z) = 1$, thus with a simple induction it follows that

\begin{align}
p_n(z)q_{n-1}(z) - p_{n-1}(z)q_n(z) = (-1)^{n-1}. \label{cor1}
\end{align}
Using this, for any fixed $n > 0$,

\begin{align*}
\frac{p_n(z)}{q_n(z)} &= \frac{p_{n-1}(z)}{q_{n-1}(z)} + \frac{(-1)^{n-1}}{q_{n-1}(z)q_n(z)}\\
&= \frac{p_{n-2}(z)}{q_{n-2}(z)} + \frac{(-1)^{n-2}}{q_{n-2}(z)q_{n-1}(z)} + \frac{(-1)^{n-1}}{q_{n-1}(z)q_n(z)}\\
&\vdots\\
&= \frac{p_0(z)}{q_0(z)} + \sum_{k=0}^{n-1} \frac{(-1)^{k}}{q_{k}(z)q_{k+1}(z)}\\
&= a_0(z) + \sum_{k=0}^{n-1} \frac{(-1)^{k}}{q_{k}(z)q_{k+1}(z)}.\\
\end{align*}
Also observe that the degrees of the $q_n(z)$ are strictly increasing. Indeed, $\deg(q_1(z)) \geq 1 >  0 = \deg(q_0(z))$. Assuming $\deg(q_k(z)) > \deg(q_{k-1}(z))$ for some $k \geq 1$, we also have

\begin{align*}
\deg(q_{k+1}(z)) &= \deg(a_{k+1}(z)q_k(z) + q_{k-1}(z))\\
&= \deg(a_{k+1}(z)q_k(z)) > \deg(q_k(z)).
\end{align*}
It follows by induction that the degree of $q_n(z)$ is strictly increasing over $n$, and thus the degrees of the terms of the sum

\begin{align*}
    \sum_{k=0}^{n-1} \frac{(-1)^{k}}{q_{k}(z)q_{k+1}(z)},
\end{align*}
are strictly decreasing over $k$. Now showing that $p_n(z)/q_n(z)$ is a Cauchy sequence, fix $\epsilon > 0$. For any $1 \leq m < n$,
\begin{align*}
\frac{p_n(z)}{q_n(z)} - \frac{p_m(z)}{q_m(z)} &= \sum_{k = m}^{n-1} \frac{(-1)^{k}}{q_k(z)q_{k+1}(z)}.
\end{align*}
As the degrees of the terms are strictly decreasing, it follows that the degree of this sum is $-\deg(q_m(z))-\deg(q_{m+1}(z)) < -2\deg(q_m(z))$, hence by definition of the metric,
\begin{align*}
d\Big(\frac{p_n(z)}{q_n(z)}, \frac{p_m(z)}{q_m(z)}\Big) < 2^{-2\deg(q_m(z))}.
\end{align*}
As $\deg(q_m(z))$ is strictly increasing, simply set $n_0$ such that $-2\deg(q_{n_0}(z)) \leq \log_2(\epsilon)$, then for all $m, n \geq n_0$, 
\begin{align*}
d\Big(\frac{p_n(z)}{q_n(z)}, \frac{p_m(z)}{q_m(z)}\Big) < \epsilon.
\end{align*}
As this can be done for every $\epsilon > 0$, it follows the sequence of convergents is a Cauchy sequence and hence must converge in $\Q((z^{-1}))$ by completeness, with the limiting function expressed as
\begin{align}
\label{convseries}
f(z) := \lim_{n \to \infty} \frac{p_n(z)}{q_n(z)} = a_0(z) + \sum_{k=0}^\infty \frac{(-1)^{k}}{q_k(z)q_{k+1}(z)}.
\end{align}

\end{proof}

\begin{Corollary}
\label{corollay}
Let $p_n(z)/q_n(z)$ be a convergent of $f(z) \in \Q((z^{-1}))$ such that $p_n(z), q_n(z) \in \Q[z]$. Then $p_n(z)$ and $q_n(z)$ are coprime. That is, $\gcd(p_n(z), q_n(z))$ is a constant.
\end{Corollary}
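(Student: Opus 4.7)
The plan is to read the corollary off directly from identity~(\ref{cor1}) already established in the proof of Proposition~\ref{prop2}, namely
\[
p_n(z)q_{n-1}(z) - p_{n-1}(z)q_n(z) = (-1)^{n-1}.
\]
This is a Bezout-type relation: it exhibits the nonzero constant $(-1)^{n-1}$ as a $\Q[z]$-linear combination of $p_n(z)$ and $q_n(z)$, so any common polynomial divisor of the pair is forced to divide a unit of $\Q[z]$ and is therefore itself a unit.

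Concretely, I would first dispose of the trivial case $n = 0$, where $q_0(z) = 1$ and coprimality is immediate. For $n \geq 1$, the relevant setting (noted just after Proposition~\ref{prop1}) is that the partial quotients $a_k(z)$ lie in $\Q[z]$, which by an easy induction on the recurrences of Proposition~\ref{prop1} ensures that $p_{n-1}(z), q_{n-1}(z) \in \Q[z]$ as well. Then, supposing $g(z) \in \Q[z]$ divides both $p_n(z)$ and $q_n(z)$, it must divide the $\Q[z]$-combination $p_n(z)q_{n-1}(z) - p_{n-1}(z)q_n(z)$, which by (\ref{cor1}) equals $(-1)^{n-1}$. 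Hence $g(z)$ divides a nonzero element of $\Q$, so $g(z)$ is itself a nonzero constant; this is exactly the statement that $\gcd(p_n(z), q_n(z))$ is constant.

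The heavy lifting has already been done in establishing (\ref{cor1}), and there is no genuine obstacle remaining. The only subtlety worth flagging is ensuring that $p_{n-1}(z)$ and $q_{n-1}(z)$ lie in $\Q[z]$, so that the divisibility step takes place entirely within $\Q[z]$ rather than in the larger ring $\Q((z^{-1}))$ where the notion of ``divides'' is less useful; this is automatic once the $a_k(z)$ are taken to be polynomials, the regime in which the corollary is intended to apply.
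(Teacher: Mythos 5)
Your proposal is correct and follows essentially the same route as the paper: both deduce coprimality directly from the Bezout-type identity (\ref{cor1}), observing that any common divisor of $p_n(z)$ and $q_n(z)$ must divide the constant $(-1)^{n-1}$. Your extra remarks about the $n=0$ case and about the partial quotients being polynomials are sensible but not substantively different from the paper's argument.
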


\begin{proof}
From equation (\ref{cor1}), $p_n(z)q_{n-1}(z)-p_{n-1}(z)q_n(z) = (-1)^{n-1}$. Note however, for any pair of polynomials $P(z), Q(z) \in \Q[z]$, $p_n(z)P(z) + q_n(z)Q(z)$ is always a multiple of $\gcd(p_n(z), q_n(z))$. We immediately conclude that $\gcd(p_n(z), q_n(z))$ divides 1, hence it must be some constant.
\end{proof}
While this establishes that all continued fractions converge in $\Q((z^{-1}))$, the power of continued fractions is that given $f \in \Q((z^{-1}))$, it is very simple to find a sequence $(a_n(z))_{n \geq 0} \in \Q[z]$ such that $f = [a_0(z), a_1(z), ...]$.\\
\\
For $f \in \Q((z^{-1}))$, let $\lfloor f \rfloor$ be the polynomial part of $f$. That is, if $f(z) = \sum_{k=-n}^\infty c_{-k}z^{-k}$, then $\lfloor f(z) \rfloor = \sum_{k=0}^n c_kz^k$ if $n \geq 0$, and $\lfloor f(z) \rfloor = 0$ otherwise.

\begin{Theorem} [Continued Fraction Algorithm for Laurent Series]

Assume $f(z) \in \Q((z^{-1}))$ is not a rational function, and define $a_0(z) = \lfloor f(z) \rfloor$, $f_1(z) = (f(z) - a_0(z))^{-1}$. For $n \geq 1$, inductively define the sequences,
\begin{align*}
a_n(z) &= \lfloor f_n(z) \rfloor\\
f_{n+1}(z) &=  (f_n(z) - a_n(z))^{-1}.
\end{align*}
Then $f_n(z)$ is well defined for all $n \geq 1$, and

\begin{align*}
[a_0(z), a_1(z), a_2(z), ...] = f(z).
\end{align*}
\end{Theorem}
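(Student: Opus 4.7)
The plan is to prove two separate things: (a) the recursion producing $f_n$ never breaks down, and (b) the resulting continued fraction actually converges to $f$ in the metric on $\Q((z^{-1}))$.

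For (a), I would first show by a quick induction that $\deg(f_n) \geq 1$ for every $n \geq 1$: by construction $f_{n-1}(z) - a_{n-1}(z)$ consists only of terms of negative degree, so its inverse $f_n$ has degree at least $1$, and therefore $a_n = \lfloor f_n \rfloor$ also has degree at least $1$. This in particular ensures that the sequence $(a_n(z))_{n \geq 0}$ satisfies the hypotheses of Propositions \ref{prop1} and \ref{prop2}. The real content of well-definedness is that $f_n(z) - a_n(z)$ is never zero, equivalently that $f_n$ is never a polynomial. I would prove this by contrapositive: if some $f_n$ were a polynomial, then $f_{n-1} - a_{n-1} = 1/f_n$ would be rational, hence $f_{n-1}$ would be rational, and iterating downwards $f_1$ and finally $f = a_0 + 1/f_1$ would be rational, contradicting the hypothesis.

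For (b), I would establish by induction on $n$ the identity
\[
f(z) \;=\; [a_0(z), a_1(z), \ldots, a_n(z), f_{n+1}(z)].
\]
The base case $f = a_0 + 1/f_1 = [a_0, f_1]$ is immediate from the definition of $f_1$, and the inductive step simply rewrites the final entry $f_n$ as $a_n + 1/f_{n+1}$ and reuses the algebraic manipulation that underlies Proposition \ref{prop1}. Applying that proposition with $f_{n+1}$ in the role of the final entry (legitimate since $\deg(f_{n+1}) \geq 1$) gives
\[
f(z) \;=\; \frac{f_{n+1}(z)\,p_n(z) + p_{n-1}(z)}{f_{n+1}(z)\,q_n(z) + q_{n-1}(z)}.
\]
Subtracting $p_n/q_n$ and using identity (\ref{cor1}) produces the telescoped expression
\[
f(z) - \frac{p_n(z)}{q_n(z)} \;=\; \frac{(-1)^{n}}{q_n(z)\bigl(f_{n+1}(z)\,q_n(z) + q_{n-1}(z)\bigr)}.
\]
Since $\deg(f_{n+1}) \geq 1$ and the degrees of $q_n$ are strictly increasing (as noted in Proposition \ref{prop2}), the denominator has degree at least $2\deg(q_n)+1$, so the right-hand side has degree tending to $-\infty$. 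Hence $p_n/q_n \to f$ in the metric, and by Proposition \ref{prop2} the same sequence converges to $[a_0(z), a_1(z), \ldots]$; uniqueness of limits then yields the theorem.

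The main obstacle is the well-definedness argument in step (a), since that is the one place where the non-rationality hypothesis on $f$ is genuinely used, and the downward chain of implications needs to be packaged cleanly. The convergence computation in step (b) is, by contrast, essentially the telescoping identity used in Proposition \ref{prop2}, adapted so that the final entry is the non-polynomial tail $f_{n+1}(z)$ rather than $a_{n+1}(z)$.
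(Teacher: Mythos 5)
Your proposal is correct and follows essentially the same route as the paper's proof: the well-definedness argument is the same non-rationality chain (the paper packages it via $f = [a_0,\ldots,a_n]$ being rational, you unwind it step by step, which is equivalent), and the convergence argument uses the identical identity $f = [a_0,\ldots,a_n,f_{n+1}]$ together with (\ref{cor1}) and the degree bound $\deg\bigl(q_n(z)f(z)-p_n(z)\bigr) < -\deg(q_n(z))$.
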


\begin{proof}
First, we need to check $f_n(z)$ is well defined for all $n \geq 1$. This will be true if $f(z) \neq a_0(z)$ and $f_n(z) \neq a_n(z)$ for all $n \geq 1$, as then we never have the issue of dividing by zero. Clearly this first condition is satisfied as $f(z) = a_0(z) = \lfloor f(z) \rfloor$ immediately implies $f(z)$ is a rational function, a contradiction. Suppose there exists a $n \geq 1$ where $f_n(z) = a_n(z)$. Then by construction,
\begin{align*}
f(z) &= a_0(z) + \frac{1}{f_1(z)}\\
&= a_0(z) + \frac{1}{a_1(z) + \frac{1}{f_2(z)}}\\
&\vdots\\
&= [a_0(z), a_1(z), ..., f_n(z)]\\
&= [a_0(z), a_1(z), ..., a_n(z)].
\end{align*}
By definition, $a_k(z)$ is a polynomial for all $k \geq 1$, hence this implies $f(z)$ is a rational function, a contradiction. It follows that the $f_n(z)$ sequence is well defined for all $n \geq 1$. Additionally note that for $n \geq 1$, $\deg(a_n(z)) \geq \deg(f_n(z)) = -\deg(f_{n-1}(z) - \lfloor f_{n-1}(z)\rfloor) \geq 1$, hence the continued fraction is well defined.\\
\\
Now showing that the continued fraction converges to $f(z)$, let $(p_n(z)/q_n(z))_{n \geq 0}$ be the convergents of the continued fraction $[a_0(z), a_1(z), a_2(z), ...]$. We have just shown that for any $n \geq 0$, $f(z) = [a_0(z), a_1(z), ..., a_n(z), f_{n+1}(z)]$. Let $(p_k'(z)/q_k'(z))_{0 \leq k \leq n+1}$ be the convergents of this finite continued fraction. By the recursive definition from Proposition \ref{prop1}, $p_k(z) = p_k'(z)$ and $q_k(z) = q_k'(z)$ for $1 \leq k \leq n$, hence
\begin{align*}
f(z) &= \frac{p_{n+1}'(z)}{q_{n+1}'(z)}\\
&= \frac{f_{n+1}(z)p_n(z) + p_{n-1}(z)}{f_{n+1}(z)q_{n}(z) + q_{n-1}(z)}.
\end{align*}
It follows that

\begin{align*}
q_n(z)f(z)-p_n(z) &= q_n(z)\frac{f_{n+1}(z)p_{n}(z) + p_{n-1}(z)}{f_{n+1}(z)q_{n}(z) + q_{n-1}(z)} - p_n(z)\\
&= \frac{p_{n-1}(z)q_n(z) - p_n(z)q_{n-1}(z)}{f_{n+1}(z)q_{n}(z) + q_{n-1}(z)}\\
&= \frac{(-1)^n}{f_{n+1}(z)q_{n}(z) + q_{n-1}(z)},
\end{align*}
where for the last equality, we have applied (\ref{cor1}). Note that, $\deg(f_{n-1}(z) - \lfloor f_{n-1}(z) \rfloor) < 0$, hence by definition of the $a_n(z)$ and $f_n(z)$ sequences, $\deg(f_n(z)) \geq 1$ for $n \geq 1$. Recalling the degrees of $q_n(z)$ were strictly increasing, we must have
\begin{align*}
\deg(q_n(z)f(z)-p_n(z)) &= -\deg(f_{n+1}(z)q_{n}(z) + q_{n-1}(z)) \\
&< -\deg(q_n(z)).
\end{align*}
This implies that the degree of $f(z) - p_n(z)/q_n(z)$ must converge to $-\infty$ as $n \to \infty$, hence by definition of the metric on $\Q((z^{-1}))$, as $n \to \infty$
\begin{align*}
d\Big(f(z), \frac{p_n(z)}{q_n(z)}\Big) \to 0.
\end{align*}
It follows the convergents converge to $f(z)$, and hence

\begin{align*}
f(z) = \lim_{n \to \infty}\frac{p_n(z)}{q_n(z)} = [a_0(z), a_1(z), a_2(z), ...].
\end{align*}
\end{proof}

We say $(p_n(z)/q_n(z))_{n \geq 0}$ is the sequence of convergents of $f \in \Q((z^{-1}))$ if it is the sequence of convergents for the continued fraction given by the continued fraction algorithm applied to $f$. It should be noted that, although we assumed that $f(z)$ was not a rational function to guarantee that the algorithm will not terminate, this is not strictly required. If $f(z)$ is a rational function, then it can be shown that the algorithm will terminate once an exact representation of $f$ is reached, leading to a finite continued fraction expression of $f$. As rational functions are not very interesting in the context of Mahler numbers, we don't place as much emphasis on cases such as these.\\
\\
We conclude this chapter with two very useful properties about continued fractions for Laurent series which allows us to very quickly check if a rational function is a convergent of a given Laurent series, and also show that the convergents are very good approximations of $f(z)$ in regards to how large the degree of the denominator of the convergent is.

\begin{Lemma}
\label{lemma1}
Let $(p_n(z)/q_n(z))_{n \geq 0}$ be the sequence of convergents for a non-rational formal Laurent series $f(z) \in \Q((z^{-1}))$. Then $\deg(q_n(z)f(z)-p_n(z)) = -\deg(q_{n+1}(z))$.
\end{Lemma}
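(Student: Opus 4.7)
The plan is to leverage the identity derived during the proof of the Continued Fraction Algorithm, namely
\begin{align*}
q_n(z)f(z) - p_n(z) = \frac{(-1)^n}{f_{n+1}(z)q_n(z) + q_{n-1}(z)},
\end{align*}
so that the lemma reduces to showing $\deg\bigl(f_{n+1}(z)q_n(z) + q_{n-1}(z)\bigr) = \deg(q_{n+1}(z))$.

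First, I would handle the denominator on the right. Since the degrees of $q_n(z)$ are strictly increasing (as established in Proposition \ref{prop2}) and $\deg(f_{n+1}(z)) \geq 1$, the term $f_{n+1}(z)q_n(z)$ strictly dominates $q_{n-1}(z)$ in degree, so the degree of the sum is simply $\deg(f_{n+1}(z)) + \deg(q_n(z))$.

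Next, I would compare this with $\deg(q_{n+1}(z))$. By the recurrence $q_{n+1}(z) = a_{n+1}(z)q_n(z) + q_{n-1}(z)$ and again the strict monotonicity of the degrees of $q_n(z)$ (combined with $\deg(a_{n+1}(z)) \geq 1$), we get $\deg(q_{n+1}(z)) = \deg(a_{n+1}(z)) + \deg(q_n(z))$. The link between the two expressions is the observation that $a_{n+1}(z) = \lfloor f_{n+1}(z)\rfloor$ and $\deg(f_{n+1}(z)) \geq 1$, which forces $\deg(a_{n+1}(z)) = \deg(f_{n+1}(z))$ since the fractional (negative-degree) part of $f_{n+1}(z)$ contributes nothing to the leading term. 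Combining these two equalities yields $\deg(f_{n+1}(z)q_n(z) + q_{n-1}(z)) = \deg(q_{n+1}(z))$, and substituting into the boxed identity above delivers the claim.

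The main obstacle, such as it is, is the bookkeeping: being careful that the strict inequality $\deg(f_{n+1}(z)q_n(z)) > \deg(q_{n-1}(z))$ really does let us ignore the $q_{n-1}(z)$ term, and verifying cleanly that $\deg\lfloor f_{n+1}(z)\rfloor = \deg(f_{n+1}(z))$ whenever $\deg(f_{n+1}(z)) \geq 1$. Neither is deep, but both are needed to make the degree arithmetic rigorous rather than merely plausible.
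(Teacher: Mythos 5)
Your proof is correct, but it takes a different route from the paper's. The paper derives the lemma from the series representation
\begin{align*}
f(z) - \frac{p_n(z)}{q_n(z)} = \sum_{k = n}^\infty \frac{(-1)^{k}}{q_k(z)q_{k+1}(z)},
\end{align*}
established in Proposition \ref{prop2}: after multiplying by $q_n(z)$, the strict monotonicity of $\deg(q_k(z))$ shows the $k=n$ term $(-1)^n/q_{n+1}(z)$ dominates the tail, giving the degree immediately. You instead work from the complete-quotient identity $q_n(z)f(z)-p_n(z) = (-1)^n/(f_{n+1}(z)q_n(z)+q_{n-1}(z))$ from the proof of the Continued Fraction Algorithm, and close the argument with the observation that $\deg(a_{n+1}(z)) = \deg(\lfloor f_{n+1}(z)\rfloor) = \deg(f_{n+1}(z))$ whenever $\deg(f_{n+1}(z)) \geq 1$ — a fact the paper's route never needs. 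Both arguments rest on the same two pillars (the recurrence for $q_{n+1}(z)$ and the strict growth of $\deg(q_n(z))$), so neither is deeper than the other; yours is arguably more self-contained in that it avoids invoking the infinite series, at the cost of the extra floor-function bookkeeping and of needing the convention $p_{-1}(z)=1$, $q_{-1}(z)=0$ to cover the case $n=0$ of the complete-quotient identity, which you should state explicitly for full rigour.
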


\begin{proof}
Using the series expansion from (\ref{convseries}),
\begin{align*}
f(z) - \frac{p_n(z)}{q_n(z)} &= a_0(z) + \sum_{k = 0}^\infty \frac{(-1)^{k}}{q_k(z)q_{k+1}(z)} - a_0(z) - \sum_{k = 0}^{n-1} \frac{(-1)^{k}}{q_k(z)q_{k+1}(z)}\\
&= \sum_{k = n}^\infty \frac{(-1)^{k}}{q_k(z)q_{k+1}(z)}.
\end{align*}
Thus by multiplying by $q_n(z)$ and using the fact that the degrees of $q_n(z)$ are strictly increasing,

\begin{align*}
\deg(q_n(z)f(z) - p_n(z)) &= \deg\Big( \frac{(-1)^{n}}{q_{n+1}(z)} + \sum_{k = n+1}^\infty \frac{(-1)^{k}q_n(z)}{q_k(z)q_{k+1}(z)}\Big)\\
&= \deg\Big( \frac{(-1)^{n}}{q_{n+1}(z)}\Big) = -\deg(q_{n+1}(z)).
\end{align*}
\end{proof}

An immediate corollary of this is that $\deg(q_n(z)f(z)-p_n(z))$ is strictly decreasing over $n$, and so the quality of the rational approximation $p_n(z)/q_n(z)$ is consistently improving in our metric space.

\begin{Theorem}[Criterion for Convergents of Laurent Series]
\label{criterion}
Let $p(z), q(z) \in \Q[z]$ be coprime and $f(z) \in \Q((z^{-1}))$. Then $\deg(q(z)f(z)-p(z)) < -\deg(q(z))$ if and only if $p(z)/q(z)$ is a convergent of $f$.
\end{Theorem}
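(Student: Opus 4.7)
The plan is to prove the two directions separately, with the forward direction being essentially a restatement of the preceding Lemma and the reverse direction requiring a bit more work.

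For the forward direction, suppose $p(z)/q(z) = p_n(z)/q_n(z)$ as rational functions for some $n$. By Corollary \ref{corollay} the polynomials $p_n, q_n$ are coprime, as are $p, q$ by hypothesis, so $p = c\,p_n$ and $q = c\,q_n$ for some nonzero constant $c \in \Q$. Applying Lemma \ref{lemma1} I get $\deg(q_n f - p_n) = -\deg(q_{n+1})$, and scaling by $c$ gives $\deg(qf - p) = -\deg(q_{n+1})$. Since the degrees of $q_k$ are strictly increasing, $\deg(q_{n+1}) > \deg(q_n) = \deg(q)$, so $\deg(qf - p) < -\deg(q)$ as required.

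For the reverse direction, assume $\deg(qf-p) < -\deg(q)$; I want to exhibit $p/q$ as a convergent. Since $\deg(q_0) = 0$ and the $\deg(q_k)$ are strictly increasing to infinity, I pick the unique $n \geq 0$ with $\deg(q_n) \leq \deg(q) < \deg(q_{n+1})$. The key computation is the identity
\begin{align*}
pq_n - p_n q \;=\; -q_n(qf - p) + q(q_n f - p_n),
\end{align*}
obtained by adding and subtracting $q q_n f$. I then bound the degrees of the two terms on the right: for the first, $\deg(q_n) + \deg(qf - p) < \deg(q_n) - \deg(q) \leq 0$, and for the second, $\deg(q) + \deg(q_n f - p_n) = \deg(q) - \deg(q_{n+1}) < 0$ by Lemma \ref{lemma1} and the choice of $n$. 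So $pq_n - p_n q$ has strictly negative degree, yet it is a polynomial, hence it must be the zero polynomial. Thus $pq_n = p_n q$, and coprimality of both pairs forces $p = c\,p_n$ and $q = c\,q_n$ for some constant $c$, so $p/q$ is the convergent $p_n/q_n$.

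The main obstacle is getting the degree bookkeeping in the reverse direction exactly right: I need both the inequality $\deg(q_n) \leq \deg(q)$ (from the choice of $n$) and the strict inequality $\deg(qf - p) < -\deg(q)$ (from the hypothesis) to combine so that each of the two summands in the identity has negative degree. Everything else is automatic once the zero conclusion is reached, because Corollary \ref{corollay} plus the coprimality assumption immediately pin down $p$ and $q$ as scalar multiples of $p_n$ and $q_n$. A small edge case to mention briefly is when $f$ is rational and the continued fraction terminates; the same argument applies, since $\deg(q_k)$ is still strictly increasing on the (finite) index range and one just needs to check that the chosen $n$ lies in that range, which follows from the degree bound on $q$.
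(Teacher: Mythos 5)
Your proof is correct, and while your forward direction matches the paper's, your reverse direction takes a genuinely different route. The paper argues by contraposition: assuming $p(z)/q(z)$ is \emph{not} a convergent, it chooses $n$ with $\deg(q_{n-1}(z)) \leq \deg(q(z)) < \deg(q_n(z))$, uses the determinant identity (\ref{cor1}) to solve $q = aq_{n-1} + bq_n$, $p = ap_{n-1} + bp_n$ for polynomials $a(z), b(z)$, rules out $a = 0$ or $b = 0$, matches the degrees of $a q_{n-1}$ and $b q_n$, and then computes $\deg(q(z)f(z)-p(z)) = \deg(a(z)) - \deg(q_n(z)) \geq -\deg(q(z))$. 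You instead prove the implication directly: the identity $pq_n - p_nq = -q_n(qf-p) + q(q_nf - p_n)$ together with your choice of $n$ forces the polynomial $pq_n - p_nq$ to have negative degree, hence to vanish, and coprimality (via Corollary \ref{corollay}) then identifies $p/q$ with $p_n/q_n$. Your argument is shorter and avoids the case analysis in the paper's decomposition (nonvanishing of $a$ and $b$, equality of $\deg(aq_{n-1})$ and $\deg(bq_n)$); what the paper's approach buys in exchange is the sharper complementary information that every non-convergent satisfies $\deg(qf - p) \geq -\deg(q)$ with an explicit expression for that degree, and it exhibits the useful technique of expanding an arbitrary fraction in the basis of two consecutive convergents. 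Your closing remark about the rational (terminating) case is slightly glossed over --- if $\deg(q)$ exceeds the degree of the final convergent's denominator the required $n$ does not exist --- but the paper likewise restricts attention to non-rational $f$, so this does not count against you.
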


\begin{proof}
If $p(z)/q(z) = p_n(z)/q_n(z)$ is a convergent of $f$, the statement is clear from Lemma \ref{lemma1}, as $\deg(q(z)f(z)-p(z)) = -\deg(q_{n+1}(z)) < -\deg(q(z))$. Now suppose that $p(z)/q(z)$ is not a convergent of $f$. Fix $n$ such that $\deg(q_{n-1}(z)) \leq \deg(q(z)) < \deg(q_n(z))$, and consider the system of equations for $a(z)$ and $b(z)$,
\begin{align*}
q(z) = a(z)q_{n-1}(z) + b(z)q_n(z)\\
p(z) = a(z)p_{n-1}(z) + b(z)p_n(z).    
\end{align*}
From (\ref{cor1}) the corresponding matrix has determinant $(-1)^{n-1}$, hence there exists unique polynomial solutions for $a(z)$ and $b(z)$. If either $a(z)$ or $b(z)$ are zero, then $p(z)/q(z)$ is a convergent, so this cannot be the case. Furthermore, if $\deg(a(z)q_{n-1}(z)) \neq \deg(b(z)q_n(z))$, we will then find $\deg(q(z)) \geq \deg(q_n(z))$, a contradiction to our choice of $n$. Thus,
\begin{align*}
\deg(a(z)) &= \deg(b(z)) + \deg(q_n(z)) - \deg(q_{n-1}(z))\\
&\geq \deg(q_n(z)) - \deg(q(z)).
\end{align*}
In addition, by using Lemma \ref{lemma1},

\begin{align*}
\deg(a(z)(q_{n-1}(z)f(z)-p_{n-1}(z))) &= \deg(a(z))-\deg(q_n(z))\\
&= \deg(b(z)) - \deg(q_{n-1}(z))\\
&> \deg(b(z)) - \deg(q_{n+1}(z))\\
&= \deg(b(z)(q_n(z)f(z)-p_n(z))).
\end{align*}
Putting this all together,

\begin{align*}
\deg(q(z)f(z)-p(z)) &= \deg(a(z)(q_{n-1}(z)f(z)-p_{n-1}(z)) + b(z)(q_n(z)f(z)-p_n(z)))\\
&= \deg(a(z)(q_{n-1}(z)f(z)-p_{n-1}(z)))\\
&= \deg(a(z))-\deg(q_n(z))\\
&\geq -\deg(q(z)).
\end{align*}
\end{proof}

This criterion gives $\deg(f(z) - \frac{p(z)}{q(z)}) < -2\deg(q(z))$ if and only if $p(z)/q(z)$ is a convergent of $f(z)$, suggesting that only the convergents achieve such a high quality of approximation relative to the degree of the denominator. This is somewhat analogous to the irrationality exponent where we seek good rational approximations relative to the size of the denominators of the approximations. It is this suggestion that makes looking at the convergents of formal Laurent series a powerful candidate when discussing irrationality exponents.

\chapter{Approximation Properties of Convergents}

Recall, we are investigating formal Laurent series $f \in \Q((z^{-1}))$ that satisfy
\begin{align}
f(z) &= \frac{A(z)f(z^d) +C(z)}{B(z)}, \label{Mahler}
\end{align}
where $A, B, C \in \Q[z]$, $A(z)B(z) \neq 0$. If $C(z) = 0$, then the equation reduces down to the case of Badziahin which has already been analysed in detail, so for the rest of this essay we make the assumption $C(z) \neq 0$. Although $A, B$ and $C$ can have non-integer coefficients, we will assume they have integer coefficients as we can simply multiply the numerator and denominator of (\ref{Mahler}) by a rational constant to adjust the coefficients.\\
\\
The goal of this chapter is to find an expression for the irrationality exponent of a Mahler number in terms of the sequence of convergents of $f$. While continued fractions for Laurent series give the best approximations of $f$ in $\Q((z^{-1}))$, this does not mean the approximation is accurate when evaluating them at integers, despite this holding intuitively. In the special case of functions satisfying (\ref{Mahler}) however, it is possible to produce countably many rational approximations for a Mahler number $f(b)$ from any given convergent of $f$. This allows us to form an upper and lower bound of the irrationality exponent $\mu(f(b))$, which happen to be equal, giving an exact expression for the irrationality exponent. As we are working to the same goal as Badziahin but in a slightly more generalised context, much of the working in this chapter follows the same line of logic as his, but applied to (\ref{Mahler}). \\
\\
To simplify the working in this section, we make use of some asymptotic notation.

\begin{Definition}
Let $(x_n)_{n \geq 0}, (y_n)_{n \geq 0}$ be sequences in $\R$. We say:
\begin{itemize}
    \item $x_n \ll y_n$ if there exists a constant $C > 0$ such that $|x_n| \leq C|y_n|$ for all $n \geq 0$. Similarly, $x_n \gg y_n$ if there exists a $c > 0$ such that $|x_n| \geq c|y_n|$ for all $n \geq 0$. \footnote{This is the same as Big-O and Big-$\Omega$ notation, however we denote the relationship with inequalities here since we repeatedly make use of the transitivity of this property.}\\
    \item $x_n = \Theta(y_n)$ if $x_n \ll y_n$ and $x_n \gg y_n$.\\
    \item $x_n = o(y_n)$ if for all $\epsilon > 0$, there exists a $n_0 \in \N$ such that $|x_n| < \epsilon |y_n|$ for all $n \geq n_0$.
\end{itemize}
\end{Definition}

\section{Lower bound for $\mu(f(b))$}

As before, let $(p_n(z)/q_n(z))_{n \geq 0}$ be the sequence of convergents for $f$. We can define another strictly increasing sequence  $(d_n)_{n \geq 0} \in \N$ such that $d_n = \deg(q_n(z))$. With this, we can give a lower bound on the irrationality exponent of some Mahler numbers.

\begin{Lemma}
\label{lowerbound}
Let $f(z) \in \Q((z^{-1}))$ be a Mahler function that satisfies (\ref{Mahler}) and is not a rational function. Let $b \in \Z, |b| \geq 2$ be inside the disc of convergence of $f$ such that $A(b^{d^t})B(b^{d^t}) \neq 0$ for all $t \in \N$. Then the irrationality exponent of $f(b)$ is bounded from below by
\begin{align*}
\mu(f(b)) \geq 1 + \limsup_{k \to \infty} \frac{d_{k+1}}{d_k}
\end{align*}
\end{Lemma}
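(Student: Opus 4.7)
My approach is to promote each convergent $p_n(z)/q_n(z)$ of $f$ into an infinite family of integer rational approximations to $f(b)$ by iterating the functional equation (\ref{Mahler}). After scaling we may assume $p_n(z), q_n(z) \in \Z[z]$. Setting $P_n^{(0)}(z) = p_n(z)$, $Q_n^{(0)}(z) = q_n(z)$, I would define recursively
$$P_n^{(t)}(z) = A(z)\, P_n^{(t-1)}(z^d) + C(z)\, Q_n^{(t-1)}(z^d), \qquad Q_n^{(t)}(z) = B(z)\, Q_n^{(t-1)}(z^d),$$
which remain in $\Z[z]$. A direct substitution of (\ref{Mahler}) into the expression $Q_n^{(t)}(z) f(z) - P_n^{(t)}(z)$ produces the identity $Q_n^{(t)}(z) f(z) - P_n^{(t)}(z) = A(z)\bigl(Q_n^{(t-1)}(z^d) f(z^d) - P_n^{(t-1)}(z^d)\bigr)$, which telescopes to
$$Q_n^{(t)}(z) f(z) - P_n^{(t)}(z) \;=\; \prod_{i=0}^{t-1} A(z^{d^i}) \cdot \bigl(q_n(z^{d^t}) f(z^{d^t}) - p_n(z^{d^t})\bigr).$$

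Applying Lemma \ref{lemma1} to the last factor and summing the resulting geometric series in the exponents, I would obtain $\deg Q_n^{(t)} = d^t d_n + \deg(B)\tfrac{d^t - 1}{d - 1}$ and $\deg\bigl(Q_n^{(t)} f - P_n^{(t)}\bigr) = \deg(A)\tfrac{d^t - 1}{d - 1} - d^t d_{n+1}$. The hypothesis $A(b^{d^t}) B(b^{d^t}) \neq 0$ for every $t \in \N$ guarantees that each evaluation $A(b^{d^i})$ and $B(b^{d^i})$ is nonzero, so the leading term of each Laurent series survives the substitution $z = b$. Since a Laurent series of degree $-N$ converging at the integer $b$ with $|b| \geq 2$ has magnitude $\Theta(|b|^{-N})$ (the tails decay geometrically), both $|Q_n^{(t)}(b)|$ and $|Q_n^{(t)}(b) f(b) - P_n^{(t)}(b)|$ satisfy magnitude estimates that match the degrees computed above, with implied constants depending only on $n$ and on $A, B, C$.

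Dividing, the rational approximation $P_n^{(t)}(b)/Q_n^{(t)}(b) \in \Q$ satisfies $\bigl|f(b) - P_n^{(t)}(b)/Q_n^{(t)}(b)\bigr| \ll |Q_n^{(t)}(b)|^{-\mu_{n,t}}$ with
$$\mu_{n,t} \;=\; 1 + \frac{d^t d_{n+1} - \deg(A)\tfrac{d^t - 1}{d - 1} + O(1)}{d^t d_n + \deg(B)\tfrac{d^t - 1}{d - 1} + O(1)}.$$
Sending $t \to \infty$ for fixed $n$ and then $n \to \infty$ along a subsequence with $d_{n+1}/d_n \to L := \limsup_k d_{k+1}/d_k$, this quantity tends to $1 + L$; a diagonal choice of $(n_k, t_k)$ then yields integer approximations witnessing exponent arbitrarily close to $1 + L$. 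Provided infinitely many of them are \emph{distinct} reduced rationals, we conclude $\mu(f(b)) \geq 1 + L$. The main technical obstacle is ruling out accidental coincidences $P_n^{(t)}(b)/Q_n^{(t)}(b) = P_{n'}^{(t')}(b)/Q_{n'}^{(t')}(b)$ at the specific integer $b$: the underlying rational functions are pairwise distinct because they converge in $\Q((z^{-1}))$ to the non-rational $f$, and each coincidence forces $b$ to be a root of a fixed nonzero polynomial in $z$, so only finitely many $(n', t')$ can collide with any given $(n, t)$, which is enough to extract infinitely many distinct approximants.
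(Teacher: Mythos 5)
Your proposal is correct and follows essentially the same route as the paper: your iterates $P_n^{(t)}, Q_n^{(t)}$ are exactly the paper's $p_{k,m}, q_{k,m}$ from (\ref{qkmForm}) and (\ref{pkmForm}), your telescoped identity is (\ref{MahlerRepeated}), and the passage from degrees to magnitudes at $z=b$ is the content of the paper's Lemma \ref{lemma1.5}, proved there via absolute convergence of the infinite products $\prod_{i} A(b^{d^i})/(\alpha b^{r_a d^i})$ --- which is what your ``tails decay geometrically'' remark is implicitly invoking. One small imprecision: the implied constants in your $\Theta$-estimates are \emph{not} independent of $t$, since they carry factors $|\alpha|^t$ and $|\beta|^t$ from the leading coefficients of $A$ and $B$, so the $O(1)$ terms in your formula for $\mu_{n,t}$ should really be $O(t)$; as $O(t) = o(d^t)$ this does not affect the limit, and the paper handles it by absorbing these factors into the $o(1)\cdot d^m$ exponents in (\ref{qkm}) and (\ref{pkm}).
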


\begin{proof}
By Lemma $\ref{lemma1}$, for any fixed $k \in \N$ we can write
\begin{align}
\label{cfexp}
q_k(z)f(z) - p_k(z) &= \sum_{i = d_{k+1}}^\infty c_{k, i}z^{-i}
\end{align}
where $c_{k, i} \in \R$ for all $k \geq 0$ and $i \geq d_{k+1}$, and $c_{k, d_{k+1}} \neq 0$ for all $k \geq 0$. Replacing $z$ with $z^d$ and using the Mahler functional equation (\ref{Mahler}),
\begin{align}
\label{Mahlerapply}
q_k(z^d)B(z)f(z)-(p_k(z^d)A(z)+q_k(z^d)C(z)) = A(z)\sum_{i = d_{k+1}}^\infty c_{k, i}z^{-di}
\end{align}
we can repeat this procedure an additional $m-1$ times to find
\begin{align}
\label{MahlerRepeated}
q_{k, m}(z)f(z)-p_{k, m}(z) = U_m(z)\sum_{i = d_{k+1}}^\infty c_{k, i}z^{-d^mi}
\end{align}
where 
\begin{align}
\label{qkmForm}
q_{k, m}(z) := q_k(z^{d^m})\prod_{t=0}^{m-1}B(z^{d^t}), \quad U_m(z) := \prod_{t=0}^{m-1}A(z^{d^t}).
\end{align}
and $p_{k, m}(z)$ satisfies a recursive formula given by

\begin{align*}
p_{k, m}(z) = p_{k, m-1}(z^d)A(z)+q_{k, m-1}(z^d)C(z), \quad p_{k, 1}(z) = p_k(z^d)A(z)+q_k(z^d)C(z).
\end{align*}
Although not obvious, it can be checked that the closed formula
\begin{align}
p_{k, m}(z) = p_k(z^{d^m})\prod_{t = 0}^{m-1} A(z^{d^t}) + q_k(z^{d^m}) \sum_{u=0}^{m-1} \Big[\Big(\prod_{t=0}^{u-1} A(z^{d^t})\Big) C(z^{d^u})\Big(\prod_{t=u+1}^{m-1} B(z^{d^t})\Big)\Big], \label{pkmForm}
\end{align}
satisfies this recursive relationship. Indeed, by relabeling the summation and product indexes, a loose inductive proof is as follows:

\begin{align*}
&p_{k, m}(z) = p_{k, m-1}(z^d)A(z) + q_{k, m-1}(z^d)C(z)\\
&= A(z)p_{k}(z^{d^{m-1}d})\prod_{t = 1}^{m-1} A(z^{d^t}) + A(z)q_{k}(z^{d^{m-1}d}) \sum_{u=0}^{m-2} \Big[\Big(\prod_{t=1}^{u} A(z^{d^t})\Big) C(z^{d^{u+1}})\Big(\prod_{t=u+2}^{m-1} B(z^{d^t})\Big)\Big]\\
&+ C(z)q_k(z^{d^{m-1}d})\prod_{t=1}^{m-1}B(z^{d^t})\\
&= p_{k}(z^{d^{m}})\prod_{t = 0}^{m-1} A(z^{d^t}) + q_k(z^{d^m})\sum_{u=0}^{m-1} \Big[\Big(\prod_{t=0}^{u-1} A(z^{d^t})\Big) C(z^{d^u})\Big(\prod_{t=u+1}^{m-1} B(z^{d^t})\Big)\Big].\\
\end{align*}
Strictly speaking, the above expression has products that are not well defined as the lower index is larger than the upper index, however as a convention for the rest of this essay, we take their value to be 1. By evaluating $q_{k, m}(z)$ and $p_{k, m}(z)$ at $z = b$, (\ref{MahlerRepeated})  suggests that $p_{k, m}(b)/q_{k, m}(b)$ may be a reasonable approximation of $f(b)$ for large $m$. Furthermore, as $A, B$ and $C$ have integer coefficients, $q_{k, m}(b)$ and $p_{k, m}(b)$ will be rational. We must now find out how large the right hand side of (\ref{MahlerRepeated}) and $q_{k, m}(b)$ grows over $m$ to find an expression for the irrationality exponent. This is achieved with the following Lemma.

\begin{Lemma}
\label{lemma1.5}
Let $b \in \R, |b| > 1$ be inside the disc of convergence of $f$. Assume that for all $t \in \N$, $A(b^{d^t})B(b^{d^t}) \neq 0$. Then for each $k \geq 0$,
\begin{align*}
|q_{k, m}(b)| = \Theta(\beta^m |b|^{d^m(\frac{r_b}{d-1} + d_k)}), \quad \text{and} \quad |q_{k, m}(b)f(b)-p_{k, m}(b)| = \Theta(\alpha^m|b|^{d^m(\frac{r_a}{d-1}-d_{k+1})})
\end{align*}
with asymptotic dependence on $m$, where $\deg(A(z)) = r_a$, $\deg(B(z)) = r_b$ and the leading coefficients of $A(z)$ and $B(z)$ given by $\alpha$ and $\beta$ respectively.
\end{Lemma}

\begin{proof}[Proof of Lemma \ref{lemma1.5}]
Using (\ref{cfexp}), for any $k \geq 0$,
\begin{align*}
z^{d_{k+1}}(q_k(z)f(z)-p_k(z)) = \sum_{i=0}^\infty c_{k, i+d_{k+1}}z^{-i}.
\end{align*}
As we let $z \to \infty$, the RHS tends to the non-zero constant $c_{k, d_{k+1}}$. As $|b| > 1$ and $d \geq 2$, $|b|^{d^m}$ tends to $\infty$ as $m \to \infty$. We substitute this for our value of $z$ to give
\begin{align*}
\Big|\sum_{i=d_{k+1}}^\infty c_{k, i}b^{-d^m i} \Big| = \Theta(|b|^{-d^m d_{k+1}}),
\end{align*}
with asymptotic dependence on $m$. Now note we can write
\begin{align*}
\prod_{t=0}^\infty \frac{A(z^{d^t})}{\alpha z^{r_ad^t}} = \prod_{t = 0}^\infty P(z^{-d^t}),
\end{align*}
where $P(z)$ is a polynomial with $P(0) = 1$. We claim this infinite product converges absolutely if $|z| > 1$. This is obvious if $P(z) = 1$, otherwise we write $P(z) = 1 + z^iQ(z)$ where $i \geq 1$ and $Q$ is a polynomial with a non-zero constant term. Using relatively standard properties of infinite products, the infinite product converges absolutely if
\begin{align*}
\sum_{t = 0}^\infty |z^{-id^t}Q(z^{-d^t})| < \infty.
\end{align*}
Applying the ratio test will immediately show this series converges when $|z| > 1$. Indeed,
\begin{align*}
\frac{z^{-id^{t+1}}Q(z^{-d^{t+1}})}{z^{-id^{t}}Q(z^{-d^{t}})} &= z^{id^t(1-d)}\frac{Q(z^{-d^{t+1}})}{Q(z^{-d^{t}})} \longrightarrow 0 \times \frac{Q(0)}{Q(0)} = 0.
\end{align*}
We also know $A(b^{d^t}) \neq 0$ for all integers $t \geq 0$ and $|b| > 1$, hence
\begin{align*}
\prod_{t=0}^{m-1} \frac{A(b^{d^t})}{\alpha b^{r_ad^t}},
\end{align*}
converges to a non-zero number as $m \to \infty$. It follows
\begin{align*}
&\Big|\prod_{t=0}^{m-1} \frac{A(b^{d^t})}{\alpha b^{r_ad^t}}\Big| = \Theta(1),\\
\implies &\Big|\prod_{t=0}^{m-1} A(b^{d^t}) \Big| = \Theta(|\alpha|^m |b|^{(1+d + ... + d^{m-1})r_a}) = \Theta(|\alpha|^m|b|^{\frac{d^m}{d-1}r_a}).
\end{align*}
An analogous argument applies to a similar product over $B(b^{d^t})$. Putting this together,
\begin{align*}
|q_{k, m}(b)| = \Big|q_k(b^{d^m}) \prod_{t=0}^{m-1} B(b^{d^t}) \Big| &= \Theta(|\beta|^m|b|^{d^m(\frac{r_b}{d-1}+d_k)})\\
|q_{k, m}(b)f(b)-p_{k, m}(b)| &= \Big|\prod_{t = 0}^{m-1}A(b^{d^t}) \sum_{i=d_{k+1}}^\infty c_{k, i}b^{-d^mi}\Big|\\
&= \Theta(|\alpha|^m |b|^{d^m(\frac{r_a}{d-1}-d_{k+1})})
\end{align*}
\end{proof}

Applying this Lemma and continuing the proof of Lemma \ref{lowerbound}, we can say there exists a sequence $(x_m)_{m \geq 1}$ and constants $c, C \in \R$ such that $0 < c < x_m < C$ for all $m$ and,
\begin{align*}
|q_{k, m}(b)| &= x_m |\beta|^m|b|^{d^m(\frac{r_b}{d-1}+d_k)}\\
&= |b|^{d^m(\frac{r_b}{d-1}+d_k) + m\ln|\beta|/\ln|b| + \ln(x_m)/\ln|b|}.\\
\end{align*}
A similar manipulation can be done for $|q_{k, m}(b)f(b) - p_{k, m}(b)|$. Noting that the $d^m$ term grows much faster than all other terms in the exponent, we can write
\begin{align}
|q_{k, m}(b)| &= b^{d^m(d_k+\frac{r_b}{d-1}+o(1))} \label{qkm}\\
|q_{k, m}(b)f(b)-p_{k, m}(b)| &=  b^{-d^m(d_{k+1}-\frac{r_a}{d-1}+o(1))},\label{pkm}
\end{align}
where the $o(1)$ terms converge to zero as $m \to \infty$.\\
\\
Fix $\epsilon > 0$ and set $k = k(\epsilon)$ such that 
\begin{align*}
 d_k > \frac{1}{\epsilon} \max\Big(\frac{r_a}{d-1}+1, \frac{r_b}{d-1}+1\Big). 
\end{align*}
This is possible as $d_k$ is monotone increasing to $\infty$. Now choose $m_0 = m_0(k)$ such that absolute value of the $o(1)$ terms are smaller than $1/2$ for all $m \geq m_0$.\\
\\
\underline{Claim:}
\begin{align*}
|q_{k, m}(b)|^{-\frac{d_{k+1}(1+\epsilon)}{d_k(1-\epsilon)}} < |q_{k, m}(b)f(b)-p_{k, m}(b)| < |q_{k, m}(b)|^{-\frac{d_{k+1}(1-\epsilon)}{d_k(1+\epsilon)}}.
\end{align*}
Supposing this claim is true, then by increasing $m$, we gain an infinite sequence of rational approximations to $f(b)$ with the quality of approximation in terms of the size of the denominator, exactly as desired for the irrationality exponent. As $\epsilon$ tends to zero, $k$ tends to $\infty$ as a function of $\epsilon$ and the claim suggests that
\begin{align*}
\mu(f(b)) \geq 1 + \limsup_{k \to \infty}\frac{d_{k+1}}{d_k}.
\end{align*}
This has not been completely justified, as we have only verified that $p_{k, m}(b)$ and $q_{k, m}(b)$ are rational rather than integers, as the definition of the irrationality exponent requires. Note however that by (\ref{qkmForm}) and (\ref{pkmForm}), we can verify that the denominator of $p_{k, m}(b)$ and $q_{k, m}(b)$ are independent of $m$, as $A, B$ and $C$ were assumed to have integer coefficients, and the denominators of $p_k(b^{d^m})$ and $q_k(b^{d^m})$ do not increase over $m$.\\
\\
It follows that for each $k$, there exists a constant $N \in \Z$ such that
\begin{align*}
|q_{k, m}(b)|^{-\frac{d_{k+1}(1+\epsilon)}{d_k(1-\epsilon)} + \ln(N)/\ln(|q_{k, m}(b)|)} &< |Nq_{k, m}(b)f(b)-Np_{k, m}(b)| \\
&< |q_{k, m}(b)|^{-\frac{d_{k+1}(1-\epsilon)}{d_k(1+\epsilon)} + \ln(N)/\ln(|q_{k, m}(b)|)},
\end{align*}
where $Nq_{k, m}(b)$ and $Np_{k, m}(b)$ are integers for all $m$. As $m$ increases, so does $|q_{k, m}(b)|$ by Lemma \ref{lemma1.5}, hence when we take the supremum over all exponents, the additional $\ln(N)/\ln(|q_{k, m}(b)|)$ factor tends to zero and will not affect the irrationality exponent. This resolves our problem, and hence shows
\begin{align*}
\mu(f(b)) \geq 1 + \limsup_{k \to \infty} \frac{d_{k+1}}{d_k}.
\end{align*}
We are left to prove the previous claim.\\
\\
\underline{Proof of Claim:}
From (\ref{qkm}), (\ref{pkm}) and our assumptions on the $o(1)$ terms, the claim quickly reduces down to showing the following two inequalities:
\begin{align*}
(d_k + \frac{r_b}{d-1} - \frac{1}{2})\frac{d_{k+1}(1+\epsilon)}{d_k(1-\epsilon)} > d_{k+1}-\frac{r_a}{d-1}+ 1/2\\
d_{k+1}-\frac{r_a}{d-1} - 1/2 > (d_k + \frac{r_b}{d-1} + \frac{1}{2})\frac{d_{k+1}(1-\epsilon)}{d_k(1+\epsilon)}.
\end{align*}
We show both in order. First,
\begin{align*}
&(d_k + \frac{r_b}{d-1} - \frac{1}{2})\frac{d_{k+1}(1+\epsilon)}{d_k(1-\epsilon)} - ( d_{k+1}-\frac{r_a}{d-1}+ 1/2) \\
&= \frac{4\epsilon d_{k+1}d_k - d_{k+1}(1+\epsilon)}{2d_k(1-\epsilon)} + \frac{r_a + r_b}{d-1} - \frac{1}{2}\\
&> \frac{d_{k+1}(\frac{4r_a}{d-1} + 4 - 1 - \epsilon)}{2d_k(1-\epsilon)} + \frac{r_a + r_b}{d-1} - \frac{1}{2}\\
&> \frac{3r_a}{d-1} + \frac{r_b}{d-1} + 1 - \frac{\epsilon}{2}> 0.
\end{align*}
Simply rearranging terms gives the first inequality. For the second inequality,

\begin{align*}
&(d_{k+1}-\frac{r_a}{d-1} - 1/2) - (d_k + \frac{r_b}{d-1} + \frac{1}{2})\frac{d_{k+1}(1-\epsilon)}{d_k(1+\epsilon)}\\
&> d_{k+1}\frac{2\epsilon}{1+\epsilon} - \frac{r_a}{d-1} - \frac{1}{2} - \frac{d_{k+1}(1-\epsilon)}{d_k(1+\epsilon)}(\epsilon d_k-\frac{1}{2})\\
&= d_{k+1}\epsilon - \frac{r_a}{d-1}-\frac{1}{2} + \frac{d_{k+1}(1-\epsilon)}{2d_k(1+\epsilon)}\\
&> \frac{r_a}{d-1}+1 - \frac{r_a}{d-1}-\frac{1}{2}> 0.
\end{align*}
This proves the claim.\\
\\

\end{proof}

\section{Upper bound for $\mu(f(b))$}

Before we evaluate the upper bound, we first require a lemma about irrationality exponents in general. In the previous section, we found a series of rational approximations to $f(b)$ which we were able to use to get a lower bound on the irrationality exponent, however, as far as we know at this point, there may exist better rational approximations which give a larger irrationality exponent. Although it may not seem intuitive, it is possible to show that if there are too many "good" rational approximations to a number, then there cannot exist many "great" rational approximations. This concept will allow us to bound the irrationality exponent from above, which happens to intersect our lower bound found previously. We formalise this idea in the following Lemma.


\begin{Lemma}
Fix $\alpha \in \R$. Assume there exists two sequences, $\big(\frac{p_n}{q_n}\big)_{n \geq 0} \in \Q$ and  $\big(\frac{p_n'}{q_n'}\big)_{n \geq 0} \in \Q$ forming rational approximations for $\alpha$. Furthermore, suppose there exists three real valued sequences $(\theta_n)_{n \geq 0}, (\delta_n)_{n \geq 0}$ and $(\tau_n)_{n \geq 0}$ with $\theta_n$, $\delta_n, \tau_n > 0$ such that for all $n \in \N$,
\begin{enumerate}
    \item $q_n' \ll q_n^{\theta_n}$.\\
    \item $\Big| \alpha - \frac{p_n'}{q_n'}\Big| = \Theta((q_n')^{-1-\tau_n})$ and $\tau_n$ is bounded from above as $n \to \infty$.\footnote{If $\tau_n$ is unbounded, then this condition immediately implies $\mu(\alpha) = \infty$.}\\
    \item $(q_n')^{\tau_n} \gg q_{n+1}^{\delta_{n+1}} \text{ and } q_n^{\delta_n} \to \infty \text{ as } n \to \infty$.
\end{enumerate}
Under these conditions, we have
\begin{align*}
\mu(\alpha) \leq \limsup_{n \to \infty} \max\Big(1 + \frac{\theta_n}{\delta_n}, \frac{(1+\tau_n)\theta_n}{\delta_n}\Big).\\
\end{align*}
\end{Lemma}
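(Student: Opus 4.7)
The plan is to show that $\mu \leq \limsup_n \max\bigl(1 + \theta_n/\delta_n,\, (1+\tau_n)\theta_n/\delta_n\bigr)$ for every $\mu < \mu(\alpha)$, which is equivalent to the stated bound. Fix such a $\mu$: the definition of the irrationality exponent yields infinitely many coprime pairs $(p,q)$ with $q \to \infty$ and $|\alpha - p/q| < q^{-\mu}$, and the task is to bound $\mu$ uniformly in terms of an auxiliary index $n = n(q)$. I would take $n$ to be the \emph{largest} integer with $(q_n')^{\tau_n} \leq 2Cq$, where $C$ is the upper constant from condition~2. This index is well defined and tends to $\infty$ with $q$, since condition~3 together with $q_k^{\delta_k} \to \infty$ forces $(q_k')^{\tau_k} \to \infty$; by construction $(q_{n+1}')^{\tau_{n+1}} > 2Cq$. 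Applying condition~3 at index $n$ and then condition~1 yields the size estimate
\[
q_{n+1}' \leq C_1 q_{n+1}^{\theta_{n+1}} \ll q^{\theta_{n+1}/\delta_{n+1}},
\]
which is the fulcrum of the argument.

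I would then compare $p/q$ with $p_{n+1}'/q_{n+1}'$. If they coincide in lowest terms, $q = q_{n+1}'$; the display above forces $\theta_{n+1} \geq \delta_{n+1}$ for large $q$, and condition~2 gives $|\alpha - p/q| \asymp q^{-1-\tau_{n+1}}$, producing $\mu \leq 1 + \tau_{n+1} + o(1) \leq (1+\tau_{n+1})\theta_{n+1}/\delta_{n+1} + o(1)$. Otherwise $|p/q - p_{n+1}'/q_{n+1}'| \geq 1/(qq_{n+1}')$, and the triangle inequality reads
\[
\tfrac{1}{qq_{n+1}'} \leq q^{-\mu} + C(q_{n+1}')^{-1-\tau_{n+1}}.
\]
If the second term on the right dominated, one would extract $(q_{n+1}')^{\tau_{n+1}} \leq 2Cq$, contradicting the maximality of $n$. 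Hence the first term dominates, $q^{\mu-1} \leq 2q_{n+1}' \ll q^{\theta_{n+1}/\delta_{n+1}}$, and $\mu \leq 1 + \theta_{n+1}/\delta_{n+1} + o(1)$. In every case $\mu \leq \max\bigl(1 + \theta_{n+1}/\delta_{n+1},\, (1+\tau_{n+1})\theta_{n+1}/\delta_{n+1}\bigr) + o(1)$, and letting $q \to \infty$ so that $n \to \infty$ and taking $\limsup$ completes the proof.

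The main obstacle is precisely this choice of comparison index. With the naive choice $q_n^{\delta_n} \leq q < q_{n+1}^{\delta_{n+1}}$ and comparison against $p_n'/q_n'$, a recalcitrant sub-case appears in which $p/q$ is a strictly better approximant to $\alpha$ than $p_n'/q_n'$; the triangle inequality then only produces a lower bound on $\mu$, not an upper one. Shifting one index up so that $(q_{n+1}')^{\tau_{n+1}} > 2Cq$ holds by maximality rules out the analogous dangerous sub-case by direct contradiction with the choice of $n$, after which all remaining estimates are routine size comparisons via conditions 1 and 3.
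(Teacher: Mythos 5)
Your proof is correct and follows essentially the same route as the paper's: both choose the comparison index by the same threshold condition $(q'_m)^{\tau_m} > 2Cq$ with the preceding index at or below the threshold (you take the largest $n$ below and compare with $n+1$; the paper takes the smallest $n$ above and compares with $n$), bound $q'$ at that index by $q^{\theta/\delta}$ via conditions 3 and 1, and split into the cases $p/q = p'/q'$ and $p/q \neq p'/q'$ with the reverse triangle inequality. The only cosmetic difference is that you bound every $\mu < \mu(\alpha)$ directly rather than showing that any $\mu$ exceeding the claimed bound admits only finitely many solutions.
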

Although many of the claims of this report are the work of Badziahin extended to the case of $C(z) \neq 0$, it should be noted that this Lemma is slightly stronger than that of Badziahin's. The differences include the fact that we do not assume anything about how well $\big(\frac{p_n}{q_n}\big)_{n \geq 0}$ approximates $\alpha$ or if $\theta_n \geq 1$ for all $n \in \N$ \cite{badziahin}. Although these extra assumptions will be met for our purposes, it is certainly possible to use this Lemma in contexts other than for Mahler numbers where we don't have them.

\begin{proof}
First assume that $1 + \frac{\theta_n}{\delta_n}$ and $(1+\tau_n)\frac{\theta_n}{\delta_n}$ are bounded from above, as if this is not the case, the statement of the Lemma is obvious as $\mu(\alpha) \leq \infty$ always.\\
\\
Let $\frac{p}{q}$ be any rational number approximating $\alpha$, and assume $q$ is large. Let $c_1$ be the constant from condition (b) such that
\begin{align*}
\Big| \alpha - \frac{p_n'}{q_n'}\Big| \leq \frac{c_1}{(q_n')^{1+\tau_n}},
\end{align*}
for all $n \geq 0$. Fix $n \in \N$ to be the smallest integer such that $2c_1q \leq (q_n')^{\tau_n}$, which is possible as (c) gives us $(q_n')^{\tau_n}$ diverging to $\infty$ as $n \to \infty$. With this, we can bound $q_n'$ from above in terms of $q$. Letting the relevant asymptotic constants being denoted by $c_i$ for $i \geq 1$, 
\begin{align*}
q_n' &\leq c_2q_n^{\theta_n}\\
&= c_2 (q_n^{\delta_n})^{\frac{\theta_n}{\delta_n}}\\
&\leq c_2 (c_3(q_{n-1}')^{\tau_{n-1}})^{\frac{\theta_n}{\delta_n}}\\
&< c_2(2c_1c_3q)^{\frac{\theta_n}{\delta_n}}.
\end{align*}
Consider the following two cases for our approximation $\frac{p}{q}$:\\
\\
\underline{Case 1:} $\frac{p}{q} = \frac{p_n'}{q_n'}$. Then by condition (b) and our bound on $q_n'$, there exists a constant $c_4$ such that
\begin{align*}
\Big| \alpha - \frac{p}{q}\Big| &\geq \frac{c_4}{(q_n')^{1+\tau_n}}\\
&> \frac{c_4}{c_2^{1+\tau_n}(2c_1c_3q)^{(1+\tau_n)\frac{\theta_n}{\delta_n}}}.
\end{align*}
Note that as $\tau_n$ and $(1 + \tau_n)\frac{\theta_n}{\delta_n}$ are bounded from above, we may bound $c_2^{1+\tau_n}(2c_1c_4)^{(1 + \tau_n)\frac{\theta_n}{\delta_n}}$ from above by constants independent of $q$ and $n$. This means there exists some constant $C_1 > 0$ such that

\begin{align*}
\Big| \alpha - \frac{p}{q}\Big| > \frac{C_1}{q^{(1 + \tau_n)\frac{\theta_n}{\delta_n}}}.
\end{align*}
\underline{Case 2:} $\frac{p}{q} \neq \frac{p_n'}{q_n'}$. By the reverse triangle inequality, our chosen $n$ and our bound on $q_n'$,
\begin{align*}
\Big|\alpha - \frac{p}{q}\Big| &\geq \Big|\frac{p}{q}-\frac{p_n'}{q_n'}\Big| - \Big| \alpha - \frac{p_n'}{q_n'}\Big|\\
&\geq \frac{1}{qq_n'} - \frac{c_1}{(q_n')^{1+\tau_n}}\\
&\geq \frac{1}{qq_n'} - \frac{1}{2qq_n'}\\
&> \frac{1}{2qc_2(2c_1c_3q)^{\frac{\theta_n}{\delta_n}}}\\
&= \frac{c_1c_3}{c_2(2c_1c_3q)^{1 + \frac{\theta_n}{\delta_n}}}.
\end{align*}
Again using the fact that $1 + \frac{\theta_n}{\delta_n}$ is bounded from above, there exists some constant $C_2 > 0$ such that

\begin{align*}
\Big|\alpha - \frac{p}{q}\Big| &> \frac{C_2}{q^{1 + \frac{\theta_n}{\delta_n}}}.
\end{align*}
To finish the proof, fix $\mu$ to be some number larger than our claimed upper bound for the irrationality exponent. It follows there exists some $n_0 \in \N$ such that $\mu > 1 + \frac{\theta_n}{\delta_n}$ and $\mu > (1 + \tau_n)\frac{\theta_n}{\delta_n}$ for all $n \geq n_0$. Fix some $n_1 \geq n_0$ such that $(q_n')^{\tau_n} < (q_{n_1}')^{\tau_{n_1}}$ for all $n \leq n_0$. It follows that in any case, if $\frac{p}{q}$ is a rational approximation for $\alpha$ where $2c_1q > (q'_{n_1})^{\tau_{n_1}}$, our fixed choice for $n$ will be larger than $n_0$ hence in all cases, setting $C = \min(C_1, C_2)$,
\begin{align*}
\Big| \alpha - \frac{p}{q}\Big| > \frac{C}{q^\mu}.
\end{align*}
An elementary fact about irrationality exponents is that due the requirement of this inequality being satisfied by infinitely many $q$ integers, the constant $C$ may be ignored in favour of the asymptotic growth of $q^\mu$. Our working implies there is an upper bound on $q$ if we are to attain an irrationality exponent of $\mu$, hence there cannot be an infinite number of sufficiently accurate approximations to $\alpha$. This means the irrationality exponent must be less than $\mu$. Taking the infimum over all choices of $\mu$ gives the required result.
\end{proof}

\begin{Theorem}
\label{thm1.10}
Let $f(z) \in \Q((z^{-1}))$ be a Mahler function that satisfies (\ref{Mahler}) and is not a rational function. Let $b \in \Z, |b| \geq 2$ be inside the disc of convergence of $f$ such that $A(b^{d^t})B(b^{d^t}) \neq 0$ for all $t \in \N$. Then the irrationality exponent of $f(b)$ is given by
\begin{align*}
\mu(f(b)) = 1 + \limsup_{k \to \infty} \frac{d_{k+1}}{d_k}.
\end{align*}
\end{Theorem}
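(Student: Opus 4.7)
The lower bound $\mu(f(b)) \geq 1 + \limsup_{k \to \infty} d_{k+1}/d_k$ is already established by Lemma \ref{lowerbound}, so it remains to prove the matching upper bound. My plan is to apply the general irrationality-exponent Lemma (the one with sequences $\theta_n$, $\delta_n$, $\tau_n$) to the integer rational approximations $Np_{k, m}(b)/Nq_{k, m}(b)$ produced in the proof of Lemma \ref{lowerbound}, but with one parameter held fixed.

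Concretely, fix an integer $M$ large enough that the asymptotics of Lemma \ref{lemma1.5} are valid, and consider $P_n = Np_{n, M}(b)$, $Q_n = Nq_{n, M}(b)$ for $n$ in some tail $n \geq k_0$. Apply the Lemma with the same sequence played on both sides, $p_n/q_n = p_n'/q_n' = P_n/Q_n$, so that $\theta_n = 1$. The key payoff of fixing $M$ is that the $|\alpha|^M$, $|\beta|^M$ prefactors in Lemma \ref{lemma1.5} become universal constants in $n$, and a short computation using that lemma gives the uniform $\Theta$-bound $|f(b) - P_n/Q_n| = \Theta(Q_n^{-1 - \tau_n})$ with $\tau_n = (d_{n+1} - r_a/(d-1))/(d_n + r_b/(d-1))$, satisfying condition (b). A direct calculation then shows that condition (c) is satisfied by taking $\delta_{n+1} = (d_{n+1} - r_a/(d-1))/(d_{n+1} + r_b/(d-1))$, which tends to $1$. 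Since $\theta_n/\delta_n \to 1$ and $\limsup \tau_n = \limsup_k d_{k+1}/d_k$, the Lemma yields
\begin{align*}
\mu(f(b)) \leq \limsup_{n \to \infty} \max\Big(1 + \frac{\theta_n}{\delta_n},\; (1 + \tau_n)\frac{\theta_n}{\delta_n}\Big) = \max\Big(2,\; 1 + \limsup_{k \to \infty} \frac{d_{k+1}}{d_k}\Big),
\end{align*}
and since $d_k$ is strictly increasing (so $d_{k+1}/d_k > 1$) the right-hand side simplifies to $1 + \limsup_{k} d_{k+1}/d_k$. The cases where this limsup is infinite reduce to $\mu(f(b)) = \infty$ via the footnote to condition (b).

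The main obstacle is verifying the $\Theta$ condition (b) uniformly in $n$: the expressions in Lemma \ref{lemma1.5} carry multiplicative factors of $|\alpha|^m, |\beta|^m$ and bounded but $m$-dependent terms, all of which would, if $m$ were allowed to grow with $n$, produce additive $O(m)$ corrections in the exponent that would prevent a single choice of $\tau_n$ from making the $\Theta$-relation uniform. Freezing $M$ independent of $n$ collapses all of these into a single implicit constant for the $\Theta$, leaving $\tau_n$ to depend cleanly on the $k$-index alone. Combined with the lower bound of Lemma \ref{lowerbound}, this proves equality.
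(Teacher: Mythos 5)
Your reduction to the upper bound is correct and the arithmetic of your choices of $\theta_n$, $\tau_n$, $\delta_n$ is internally consistent, but the argument has a fatal gap at exactly the step you flag as the main obstacle: condition (b) cannot be verified with $M$ frozen and $n=k\to\infty$. The $\Theta$-estimates of Lemma \ref{lemma1.5} are asymptotic in $m$ \emph{for each fixed $k$}, and the implied constants genuinely depend on $k$: they come from the leading coefficient of $q_k$, from the nonzero but otherwise uncontrolled coefficient $c_{k,d_{k+1}}$, and from how far the evaluation point $b^{d^m}$ must be pushed before the leading terms of $q_k(z)$ and of $\sum_{i\geq d_{k+1}}c_{k,i}z^{-i}$ dominate. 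For a fixed argument $b^{d^M}$ and $k\to\infty$ there is no reason for $|q_k(b^{d^M})|$ to be comparable to $|b|^{d^M d_k}$ (the coefficients of $q_k$ are not controlled, and $q_k(b^{d^M})$ could even vanish or be arbitrarily small for some $k$), nor for the tail $\sum_{i\geq d_{k+1}}c_{k,i}b^{-d^M i}$ to be comparable to $|b|^{-d^M d_{k+1}}$. This is precisely the warning at the start of Chapter 3: a convergent of the Laurent series need not remain a good approximation once evaluated at an integer. The $|\alpha|^m,|\beta|^m$ prefactors you worry about are the harmless part; the $k$-dependence of the implicit constants is the real issue, and freezing $M$ makes it worse rather than better, because $k$ becomes the variable tending to infinity.

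The paper's proof does the opposite: it confines $k$ to a finite window $[k_0,K]$ (so the finitely many $k$-dependent constants can be absorbed), lets $m\to\infty$ so that the asymptotics of Lemma \ref{lemma1.5} apply, uses \emph{consecutive} indices $k$ and $k+1$ for the two sequences in the auxiliary lemma, and chooses $K$ precisely so that condition (c) survives the wrap-around from $(K,m)$ to $(k_0,m+1)$; the window is then pushed to infinity at the end so that the $\epsilon$-errors vanish and the limsup of $\tau_n$ recovers $\limsup_k d_{k+1}/d_k$. To salvage your version you would need a form of Lemma \ref{lemma1.5} with constants uniform in $k$, which is not available and is essentially the reason the $m$-iteration exists at all.
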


\begin{proof}[Proof of Theorem]
The proof is an application of the previous lemma. Fix some $k_0 \in \N$, and let $K$ be the smallest integer such that $d_{K+1} > d d_{k_0+1}-r_a+d+1$. Recall from the proof of the lower bound, we found that for any $\epsilon > 0$, there exists a $k = k(\epsilon)$ and $m_0 = m(k)$ such that for $m \geq m_0$,

\begin{align*}
| q_{k, m}(b)|^{-\frac{d_{k+1}(1+\epsilon)}{d_k(1-\epsilon)}} < |q_{k, m}(b)f(b)-p_{k, m}(b)| <| q_{k, m}(b)|^{-\frac{d_{k+1}(1-\epsilon)}{d_k(1+\epsilon)}}.
\end{align*}
Now consider a $m > M = M(k_0) = \max(m(k_0), m(k_0+1), ..., m(K))$. It follows there exists a sequence $(\epsilon_{k, m})_{k_0 \leq k \leq K, m \geq M}$ dependent on $k_0$ such that,

\begin{align}
\label{1.11}
\Big| f(b) - \frac{p_{k, m}(b)}{q_{k, m}(b)}\Big| = \Theta\Big(|q_{k, m}(b)|^{-1-(\frac{d_{k+1}}{d_k} + \epsilon_{k, m})}\Big).
\end{align}
Furthermore, $\sup_{k_0 \leq k \leq K, m \geq M}|\epsilon_{k, m}|$ tends to zero as $k_0$ tends to $\infty$. Now consider the following sequences; for any integer $n \geq 1$, write $n$ uniquely in the form $n = u(K-k_0) + v$ where $u, v \in \N, 1 \leq v \leq K-k_0$. Then, 
\begin{align*}
\frac{P_n}{Q_n} &:= \frac{p_{k_0+v-1, M+u}(b)}{q_{k_0+v-1, M+u}(b)}\\
\frac{P_n'}{Q_n'} &:= \frac{p_{k_0+v, M+u}(b)}{q_{k_0+v, M+u}(b)}\\
\delta_{n} &:= \frac{d_{k_0+v}}{d_{k_0+v-1}} + \epsilon_{k_0+v-1, M+u}\\
\tau_{n} &:= \frac{d_{k_0+v+1}}{d_{k_0+v}} + \epsilon_{k_0+v, M+u}\\
\theta_n &:= \frac{d_{k_0+v} + \frac{r_b}{d-1} + o(1)}{d_{k_0+v-1} + \frac{r_b}{d-1} + o(1)} = \frac{d_{k_0+v}}{d_{k_0+v-1}} + \epsilon^*_{k_0+v-1, M+u}.
\end{align*}
Here the $o(1)$ terms refer to the same $o(1)$ term from (\ref{pkm}), and $\epsilon^*_{k_0+v-1, M+u} \to 0$ as $n \to \infty$. We now check each of the conditions of the previous lemma. For condition (a), using (\ref{pkm}),

\begin{align*}
|Q_n'| &= |q_{k_0+v, M+u}(b)|\\
&= b^{d^{M+u}(d_{k_0+v}+\frac{r_b}{d-1} + o(1))}\\
&= b^{d^{M+u}(d_{k_0+v-1}+\frac{r_b}{d-1} + o(1))\theta_n} = |Q_n|^{\theta_n}.
\end{align*}
Now for condition (b), if $\tau_n$ is bounded, the condition clearly holds and using (\ref{1.11}). If $\tau_n$ is not bounded, then the statement of the Theorem is obvious as Lemma \ref{lowerbound} immediately gives $\mu(f(b)) \geq \infty$ and thus we have equality and we don't need to consider this case. For condition (c), it is easy to observe that $\delta_n$ does not converge to zero, and thus by Lemma \ref{lemma1.5}, $Q_n^{\delta_n} \to \infty$. \\
\\
Writing $n = u(K-k_0) + v$ as before, if $v \neq K-k_0$, condition (c) is clear as $(Q_n')^{\tau_n} = Q_{n+1}^{\delta_{n+1}}$. If $n = u(K-k_0)$ for some integer $u$, then by Lemma \ref{lemma1.5} and our definition of $K = K(k_0)$,
\begin{align*}
|q_{K, M+u}(b)f(b) - p_{K, M+u}(b)| &= b^{-d^{M+u}(d_{K+1} - \frac{r_a}{d-1} + o(1))}\\
&< b^{-d^{M+u+1}(d_{k_0+1}-\frac{r_a}{d-1}+o(1))}\\
&= |q_{k_0, M+u+1}(b)f(b) - p_{k_0, M+u+1}(b)|.
\end{align*}
By applying (\ref{1.11}) and the definitions of $\tau_n$ and $\delta_n$, we must have $(Q_n')^{\tau_n} \gg Q_{n+1}^{\delta_{n+1}}$, verifying condition (c) in all cases.\\
\\
Now applying the lemma,
\begin{align*}
\mu(f(b)) \leq \limsup_{u \to \infty} \max_{k_0 \leq v < K}\Big(1 + \frac{\theta_{u(K-k_0) + v}}{\delta_{u(K-k_0)+v}}, (1+ \tau_{u(K-k_0) + v})\frac{\theta_{u(K-k_0)+v}}{\delta_{u(K-k_0)+v}}\Big).
\end{align*}
By definition, $\theta_n/\delta_n$ tends to 1 as $k_0 \to \infty$, however $\tau_{u(K-k_0)+v}$ tends to $d_{k_0+v+1}/d_{k_0+v}$ as $k_0$ tends to $\infty$ since the $\epsilon$ terms approach zero. Since $K-k_0$ increases as $k_0$ tends to $\infty$, it follows

\begin{align*}
\mu(f(b)) \leq \limsup_{n \to \infty}\Big(1 + \frac{d_{n+1}}{d_n}\Big).
\end{align*}
This exactly coincides with the lower bound of Lemma \ref{lowerbound}, hence we attain equality.
\end{proof}


\chapter{Computing the Irrationality Exponent}
While the previous chapter successfully finds an expression for the irrationality exponent in terms of the degrees of the denominator of the convergents of $f$, this expression is not easy to compute. In general, there is no closed formula for what the convergents of a formal Laurent series may be and usually they can only be computed sequentially. This makes any limit involving the convergents of a function very difficult to compute.\\
\\
The aim of this chapter is to investigate the convergents of a Mahler function satisfying (\ref{Mahler}) more deeply, with eventual goal of finding a practical method to calculate what this limit is. Again, much of this chapter achieves the same goals as Badziahin, however we must modify the various statements to account for the fact that $C(z) \neq 0$.\\
\\
Under the conditions of Theorem \ref{thm1.10}, we have found the irrationality exponent of a Mahler number $f(b)$ to be given by
\begin{align*}
\mu(f(b)) = 1 + \limsup_{k \to \infty} \frac{d_{k+1}}{d_k},
\end{align*}
where $d_k = \deg(q_k(z))$, the degree of the denominator of the $k$-th convergent of $f$. As we are essentially studying the differences between the $d_k$, we denote the set of the degrees, $\Phi$.
\begin{align*}
\Phi = \Phi(f) := \{d_k : k \in \N\} \subseteq \N.
\end{align*}
We say the tuple $[u, v]$ is a \textit{gap in $\Phi$ of size $r$} if $u, v \in \Phi$, $r = v-u > 0$ and $(u, v) \cap \Phi = \emptyset$ (that is, $u$ and $v$ are consecutive elements in $\Phi$). For further simplicity, we will also relabel the subscripts of the convergents from now so the subscript is equal to the degree of the denominator, as this is sufficient to identify which convergent we are discussing. That is, $p_u(z)/q_u(z)$ is the convergent of $f$ such that $\deg(q_u(z)) = u$.\\
\\
Note that if the size of all gaps in $\Phi$ are bounded, then $\mu(f(b)) = 2$. This is because for any upper bound on the gap size $C > 0$,
\begin{align*}
\limsup_{k \to \infty} \frac{d_{k+1}}{d_k}
&\leq \lim_{u \to \infty} \frac{u+C}{u} = 1.
\end{align*}
It follows that we can simply consider gaps that are larger than any arbitrary constant. We say a gap $[u, v]$ is \textit{big} if

\begin{align*}
v - u > \frac{r_a + r_b}{d-1}.
\end{align*}
The reason we use this constant specifically is clear in the context of the following lemma.

\begin{Lemma}
\label{lemma2.1}
Let $[u, v]$ be a big gap in $\Phi$. Then (after simplifying) the fraction
\begin{align*}
\frac{A(z)p_u(z^d) + C(z)q_u(z^d)}{B(z)q_u(z^d)},
\end{align*}
is a convergent of $f$. Furthermore, the size of the gap corresponding to this convergent is larger than $v-u$, the size of the gap $[u, v]$.
\end{Lemma}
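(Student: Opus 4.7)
The plan is to apply the Criterion for Convergents of Laurent Series (Theorem \ref{criterion}) to $P(z) := A(z)p_u(z^d) + C(z)q_u(z^d)$ over $Q(z) := B(z)q_u(z^d)$. The central algebraic step is to rewrite $Q(z)f(z) - P(z)$ using the Mahler functional equation: substituting $B(z)f(z) = A(z)f(z^d) + C(z)$ collapses the expression to $A(z)\bigl[q_u(z^d)f(z^d) - p_u(z^d)\bigr]$, which reduces everything to the approximation error of $p_u/q_u$ evaluated at $z^d$.

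Next I would extract degrees. Because $[u,v]$ is a gap in $\Phi$, the convergent immediately following $p_u/q_u$ has denominator of degree $v$, so Lemma \ref{lemma1} gives $\deg(q_u(z)f(z) - p_u(z)) = -v$. Substituting $z \mapsto z^d$ multiplies every Laurent exponent by $d$, so $\deg(Q(z)f(z) - P(z)) = r_a - dv$ and $\deg(Q(z)) = r_b + du$. Writing $g := \deg\gcd(P, Q)$, Theorem \ref{criterion} applied to the reduced fraction $(P/G)/(Q/G)$ unpacks to the inequality $r_a + r_b - d(v-u) < 2g$. The big gap hypothesis $(d-1)(v-u) > r_a + r_b$ makes the left-hand side strictly negative, so the criterion holds for every $g \geq 0$, yielding the first claim.

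For the second claim, the reduced denominator has degree $u' := r_b + du - g$, and a further application of Lemma \ref{lemma1} puts its successor in the convergent sequence at denominator degree $v' := dv + g - r_a$ (obtained from $-v' = \deg((Q/G)f - P/G) = r_a - dv - g$). Hence the new gap has size
\begin{align*}
v' - u' = d(v - u) + 2g - (r_a + r_b),
\end{align*}
and a second use of $(d-1)(v-u) > r_a + r_b$ delivers $d(v-u) - (r_a + r_b) > v - u$, so $v' - u' > v - u$ regardless of the value of $g \geq 0$.

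The main obstacle is that the degree $g$ of the common factor $\gcd(P, Q)$ is unknown a priori, so the convergent criterion cannot be applied to the unreduced pair directly; in principle $g$ could erase a large amount of degree from both numerator and denominator. The virtue of the \emph{big} gap condition is that it is calibrated exactly so that the criterion remains valid even in the worst case $g = 0$, and the same slack produces the strict growth of gap size in the second part.
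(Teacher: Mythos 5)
Your proposal is correct and follows essentially the same route as the paper: both apply Theorem \ref{criterion} to the reduced fraction, use the Mahler functional equation to identify $\deg(Q(z)f(z)-P(z)) = r_a - dv$ (adjusted by the gcd degree), and verify the criterion and the gap-growth claim via the same inequality $r_a + r_b - d(v-u) - 2g < -(v-u) - 2g < 0$ coming from the big-gap hypothesis. No gaps to report.
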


\begin{proof}
Let $G(z) = \gcd(A(z)p_u(z^d) + C(z)q_u(z^d), B(z)q_u(z^d))$, and let $r_g = \deg(G(z))$. It follows that, after simplifying, the claimed convergent of $f$ is
\begin{align*}
\frac{(A(z)p_u(z^d) + C(z)q_u(z^d))/G(z)}{B(z)q_u(z^d)/G(z)}.
\end{align*}
To show that this is a convergent of $f$, we will use Theorem \ref{criterion}. That is, it's enough to show
\begin{align*}
\deg\Big(\frac{B(z)q_u(z^d)}{G(z)}f(z) - \frac{A(z)p_u(z^d) + C(z)q_u(z^d)}{G(z)}\Big) &< -\deg\Big(\frac{B(z)q_u(z^d)}{G(z)}\Big)\\
&= -r_b - du + r_g.
\end{align*}
We can recall from (\ref{Mahlerapply}),

\begin{align*}
\deg\Big(\frac{B(z)q_u(z^d)}{G(z)}f(z) - \frac{A(z)p_u(z^d) + C(z)q_u(z^d)}{G(z)}\Big) = r_a - dv - r_g,
\end{align*}
thus it is enough to show that $r_a - dv - r_g < -r_b-du+r_g$. Checking this is true, we use the fact that $[u, v]$ is a big gap to find
\begin{align*}
(r_a - dv - r_g) - (-r_b-du+r_g) &= r_a + r_b - d(v-u) - 2r_g\\
&< -(v-u) - 2r_g < 0.
\end{align*}
It follows that the given fraction is a convergent. Applying Lemma \ref{lemma1} and using the fact that $[u, v]$ is a big gap, we find the associated gap size of the new convergent must be
\begin{align*}
(-r_a+dv+r_g) - (r_b+du-r_g) &> (v-u) + 2r_g \geq v-u.
\end{align*}
As the gap size of the new convergent is larger than $v-u$, we are done.\\
\\
\end{proof}
The power of this Lemma is that, given one convergent of $f$ that corresponds to a big gap, we can use it in addition to $A(z), B(z), C(z)$ and $d$ to find another convergent of $f$, and this will correspond to another big gap. Given one big gap, it is possible to apply this lemma repeatedly to produce a sequence of gaps strictly increasing in size, which is intuitively helpful in computing the limit superior from Theorem \ref{thm1.10}.\\
\\
To formalise this, we will introduce some terminology for these gaps.

\begin{Definition}
Let $[u, v]$ be a gap in $\Phi$. We say that the gap $[u', v']$ is the direct successor of the gap $[u, v]$ if
\begin{align*}
\frac{p_{u'}(z)}{q_{u'}(z)} &= \frac{A(z)p_u(z^d) + C(z)q_u(z^d)}{B(z)q_u(z^d)}\\
\end{align*}

We also say $[u, v]$ is a primitive gap if $[u, v]$ is a big gap and there does not exist any other big gap $[u', v']$ such that $[u, v]$ is the direct successor of $[u', v']$.
\end{Definition}

Suppose that $[u_0, v_0]$ is a primitive gap in $\Phi$. By Lemma \ref{lemma2.1}, it is possible to produce an infinite sequence of big gaps $([u_n, v_n])_{n \geq 0}$ such that $[u_{n+1}, v_{n+1}]$ is the direct successor of $[u_n, v_n]$ for all $n \geq 0$. This sequence is hereby called the \textit{primitive gap sequence} from $[u_0, v_0]$.\\
\\
Any primitive gap sequence cannot contain the same gap twice as the gap size is strictly increasing by Lemma \ref{lemma2.1}. Furthermore, $u_n$ and $v_n$ are strictly increasing. To see this, we recall from the proof of Lemma \ref{lemma2.1} that for any big gap $[u_n, v_n]$, we have
\begin{align*}
u_{n+1} = du_n + r_b -r_{g, n}\\
v_{n+1} = dv_n - r_a + r_{g, n},
\end{align*}
where $r_{g, n}$ is the degree of some polynomial, and hence not negative. Recalling $[u_n, v_n]$ is a big gap, 
\begin{align*}
(d-1)(v_n-u_n) &> r_a + r_b\\
\implies dv_n - r_a &> r_b+(d-1)u_n \geq u_n
\end{align*}
It follows $v_{n+1} \geq dv_n-r_a > u_n$. As gaps in $\Phi$ cannot overlap, the $[u_{n+1}, v_{n+1}]$ must occur after, or be the same gap, as $[u_n, v_n]$, however we know it cannot be the same gap as by Lemma \ref{lemma2.1} the gap size is strictly increasing. This only leaves the possibility of the direct successor occurring after.\\
\\
By definition, all big gaps in $\Phi$ are either themselves primitive gaps, or are the direct successor of another big gap which occurs before it. It follows that every big gap falls into at least one primitive gap sequence, hence the set of all primitive gap sequences covers the entire set of big gaps in $\Phi$, From this evidence, you may be inclined to form the hypothesis that the formula deduced from Theorem \ref{thm1.10} can be written as

\begin{align}
\label{hypothesis}
\mu(f(b)) = 1 + \sup_{[u_0, v_0] \text{ is a primitive}}\Big\{\limsup_{n \to \infty} \frac{v_n}{u_n}\Big\} \cup \{1\}.
\end{align}
The union with $\{1\}$ is required if there are no primitive gaps in $\Phi$. If this is true, it would make calculating the limit superior much easier as from the working of Lemma \ref{lemma2.1}, we can write $u_{n+1}$ and $v_{n+1}$ in terms of $u_n$ and $v_n$. This equality is in fact true, however we cannot declare this just yet, as the limit superior is the maximum limit of all possible subsequences, not just the primitive gap sequences. If there are an infinite number of primitive gap sequences, the maximal subsequence may possibly have only finitely many terms across each primitive gap sequence, causing the primitive gap sequences to fail to capture the true maximal subsequence. To show that (\ref{hypothesis}) is correct, we require an additional Lemma.

\begin{Lemma}
\label{lemma2.4}
Let $[u, v]$ be a big gap in $\Phi$ and let $([u_n, v_n])_{n \geq 0}$ be the primitive gap sequence that contains $[u, v]$. If $u\neq 0$ or $r_b \neq 0$, then,
\begin{align*}
\frac{v-\frac{r_a}{d-1}}{u+\frac{r_b}{d-1}} \leq \limsup_{n \to \infty} \frac{v_n}{u_n}. 
\end{align*}
Furthermore, if $u > \frac{r_a}{d-1}$, then

\begin{align*}
\limsup_{n \to \infty} \frac{v_n}{u_n} \leq \frac{v+\frac{r_b}{d-1}}{u - \frac{r_a}{d-1}}.
\end{align*}
\end{Lemma}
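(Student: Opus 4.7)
The plan is to exploit a shift of coordinates that converts the recursion along the primitive gap sequence into a clean form. From the computation inside the proof of Lemma \ref{lemma2.1}, we have $u_{n+1} = du_n + r_b - r_{g,n}$ and $v_{n+1} = dv_n - r_a + r_{g,n}$, where $r_{g,n} = \deg G_n \ge 0$ for $G_n = \gcd(A(z)p_{u_n}(z^d) + C(z)q_{u_n}(z^d),\, B(z)q_{u_n}(z^d))$. Reindexing so that $[u_0, v_0] = [u, v]$ (which does not affect $\limsup_n v_n/u_n$), I set $\tilde v_n := v_n - r_a/(d-1)$ and $\tilde u_n := u_n + r_b/(d-1)$, and the recursion becomes $\tilde v_{n+1} = d\tilde v_n + r_{g,n}$, $\tilde u_{n+1} = d\tilde u_n - r_{g,n}$, which unfolds to
$$\tilde v_n = d^n(\tilde v_0 + \rho_n), \qquad \tilde u_n = d^n(\tilde u_0 - \rho_n), \qquad \rho_n := \sum_{i=0}^{n-1} \frac{r_{g,i}}{d^{i+1}}.$$
Since $r_{g,n} \ge 0$, the sequence $\rho_n$ is nondecreasing, hence $\tilde v_n/\tilde u_n = (\tilde v_0 + \rho_n)/(\tilde u_0 - \rho_n)$ is itself monotone nondecreasing in $n$. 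As $u_n$ is strictly increasing in $\mathbb{Z}_{\ge 0}$ (noted in the text preceding the lemma), $u_n \to \infty$, so $v_n/u_n$ and $\tilde v_n/\tilde u_n$ share the same limit in the extended reals; the limsup is in fact a limit.

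The lower bound follows immediately. The hypothesis ``$u \neq 0$ or $r_b \neq 0$'' is precisely $\tilde u_0 > 0$, and combined with $\rho_n \ge 0$ this yields $\tilde v_n/\tilde u_n \ge \tilde v_0/\tilde u_0 = (v - r_a/(d-1))/(u + r_b/(d-1))$; passing to the limit gives the desired bound.

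The main work is in the upper bound, where the obstacle is to bound $r_{g,n}$ uniformly in $n$. The plan is to show $r_{g,n} \le r_a + r_b$ by a GCD chase: since $G_n \mid B(z) q_{u_n}(z^d)$ and $G_n \mid A(z)p_{u_n}(z^d) + C(z)q_{u_n}(z^d)$, multiplying the second by $B(z)$ and subtracting $C(z) \cdot B(z) q_{u_n}(z^d)$ gives $G_n \mid A(z)B(z)\, p_{u_n}(z^d)$. Coprimality of $p_{u_n}(z)$ and $q_{u_n}(z)$ (Corollary \ref{corollay}) is preserved under the substitution $z \mapsto z^d$ by a standard root-chasing argument in $\overline{\mathbb Q}$ (any common root $\alpha$ of $p_{u_n}(z^d), q_{u_n}(z^d)$ gives $\alpha^d$ as a common root of $p_{u_n}, q_{u_n}$), so combining with $G_n \mid B(z)q_{u_n}(z^d)$ I obtain $G_n \mid B(z) \cdot \gcd(A(z), q_{u_n}(z^d))$, which has degree at most $r_a + r_b$.

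Given this uniform bound, $\rho_n \to \rho^* \le K := (r_a + r_b)/(d-1)$ by comparison with the geometric series $\sum_{i \ge 0} d^{-i-1}$. The hypothesis $u > r_a/(d-1)$ is exactly $\tilde u_0 > K$, so the monotone increasing map $\rho \mapsto (\tilde v_0 + \rho)/(\tilde u_0 - \rho)$ on $[0, \tilde u_0)$ delivers
$$\lim_n \frac{\tilde v_n}{\tilde u_n} \le \frac{\tilde v_0 + K}{\tilde u_0 - K} = \frac{v + r_b/(d-1)}{u - r_a/(d-1)},$$
where the final equality is an algebraic simplification. The hard part of the proof is the GCD bound $r_{g,n} \le r_a + r_b$; once that is in hand, both directions of the lemma drop out cleanly from the $(\tilde v_n, \tilde u_n)$-recursion together with the monotonicity of the sequence $\rho_n$.
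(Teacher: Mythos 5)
Your proof is correct and takes essentially the same route as the paper: the same recursion $u_{n+1}=du_n+r_b-r_{g,n}$, $v_{n+1}=dv_n-r_a+r_{g,n}$ from Lemma \ref{lemma2.1}, the same key uniform bound $0\le r_{g,n}\le r_a+r_b$ obtained from the coprimality of $p_{u_n}$ and $q_{u_n}$, and the same geometric-series passage to the limit. Your substitution $\tilde u_n=u_n+\frac{r_b}{d-1}$, $\tilde v_n=v_n-\frac{r_a}{d-1}$ is a clean repackaging of the paper's iterated two-sided inequalities (and yields, as a bonus the paper does not need, that $\tilde v_n/\tilde u_n$ is monotone), but it is not a different argument.
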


\begin{proof}
Recall from the proof of Lemma \ref{lemma2.1}, If $[u_n, v_n]$ is a big gap in $\Phi$, then $[u_{n+1}, v_{n+1}]$ is given by
\begin{align}
\label{lem2.4u}
u_{n+1} &= du_n+r_b-r_{g, n}\\
\label{lem2.4v}
v_{n+1} &= dv_n-r_a+r_{g, n},
\end{align}
where $r_{g, n}$ is the degree of $G_n(z) = \gcd(A(z)p_{u_n}(z^d) + C(z)q_{u_n}(z^d), B(z)q_{u_n}(z^d))$. Note by the Euclidean algorithm and Corollary \ref{corollay},
\begin{align*}
0 \leq r_{g, n} &= \deg(\gcd(A(z)p_{u_n}(z^d) + C(z)q_{u_n}(z^d), B(z)q_{u_n}(z^d)))\\
&\leq \deg(\gcd(A(z)p_{u_n}(z^d) + C(z)q_{u_n}(z^d), q_{u_n}(z^d))) + r_b\\
&\leq \deg(\gcd(A(z), q_{u_n}(z^d))) + r_b \leq r_a + r_b.
\end{align*}
This gives the inequalities,
\begin{align*}
du_n-r_a \leq u_{n+1} \leq du_n+r_b\\
dv_n-r_a \leq v_{n+1} \leq dv_n+r_b.
\end{align*}
Let $m \in \N$ be such that $[u, v] = [u_m, v_m]$, and let $n \in \N$ be larger than $m$. By using the above inequalities a total of $m-n$ times and summing over a finite geometric series, we find,
\begin{align*}
d^{n-m} u_m - \frac{d^{n-m}-1}{d-1} r_a \leq u_n \leq d^{n-m} u_m + \frac{d^{n-m}-1}{d-1}r_b\\
d^{n-m} v_m - \frac{d^{n-m}-1}{d-1} r_a \leq v_n \leq d^{n-m} v_m + \frac{d^{n-m}-1}{d-1}r_b.
\end{align*}
As $u + \frac{r_b}{d-1} > 0$ by assumption, our upper bound on $u_n$ is positive for large enough $n$, thus
\begin{align*}
\frac{d^{n-m}v_m - \frac{d^{n-m}-1}{d-1}r_a}{d^{n-m}u_m + \frac{d^{n-m} - 1}{d-1}r_b} \leq \frac{v_n}{u_n}.
\end{align*}
Taking $n \to \infty$, we find
\begin{align*}
\frac{v - \frac{r_a}{d-1}}{u + \frac{r_b}{d-1}} \leq \limsup_{n \to \infty} \frac{v_n}{u_n}.
\end{align*}
If $u > \frac{r_a}{d-1}$, we then have $d^{n-m} u - \frac{d^{n-m}-1}{d-1} r_a > \frac{r_a}{d-1} \geq 0$ for all large $n \in \N$, hence the lower bound on $u_n$ is positive. Due to this, we can apply a similar manipulation and take $n \to \infty$ to find

\begin{align*}
\limsup_{n \to \infty} \frac{v_n}{u_n} \leq \frac{v+\frac{r_b}{d-1}}{u - \frac{r_a}{d-1}}.
\end{align*}
\end{proof}

With this Lemma, we are now ready to show that the primitive gap sequences of $\Phi$ are enough to evaluate the irrationality exponent formula.

\begin{Lemma}
\label{lemma2.5}
\begin{align}
\label{thm2.5}
\sup_{[u_0, v_0] \text{ is a primitive gap}}\Big\{ \limsup_{n \to \infty} \frac{v_n}{u_n}\Big\} \cup \{1\} = \limsup_{k \to \infty} \frac{d_{k+1}}{d_k}.
\end{align}
\end{Lemma}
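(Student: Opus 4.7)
The plan is to prove the equality in (\ref{thm2.5}) by establishing both inequalities. For the direction $\leq$: every primitive gap sequence $([u_n, v_n])_{n \geq 0}$ consists of genuine gaps in $\Phi$, so each ratio $v_n/u_n$ equals $d_{k+1}/d_k$ for some index $k$. Hence $\limsup_n v_n/u_n \leq \limsup_k d_{k+1}/d_k$, and taking the supremum over primitive gap sequences preserves this bound. The element $1$ is also bounded above by the RHS, since the strict inequality $d_{k+1} > d_k$ guarantees $\limsup_k d_{k+1}/d_k \geq 1$.

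For the direction $\geq$: denote $L := \limsup_k d_{k+1}/d_k$ and assume $L > 1$, since otherwise $L = 1$ already belongs to the left-hand set. Choose a subsequence of gaps $[u^{(i)}, v^{(i)}]$ in $\Phi$ with $v^{(i)}/u^{(i)} \to L$. Because $\Phi$ is discrete and $d_k \to \infty$, only finitely many gaps have $u$ below any fixed constant, so we may assume $u^{(i)} \to \infty$. Since $v^{(i)}/u^{(i)}$ eventually exceeds some $1+\epsilon > 1$, we have $v^{(i)} - u^{(i)} \geq \epsilon u^{(i)} > (r_a + r_b)/(d-1)$ for large $i$, hence $[u^{(i)}, v^{(i)}]$ is a big gap. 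By the discussion preceding Lemma \ref{lemma2.4}, every big gap belongs to some primitive gap sequence; let $([u_n^{(i)}, v_n^{(i)}])_{n \geq 0}$ denote the one containing $[u^{(i)}, v^{(i)}]$. Applying the lower-bound half of Lemma \ref{lemma2.4} (valid because $u^{(i)} \neq 0$ for large $i$) yields
\[
\limsup_{n \to \infty} \frac{v_n^{(i)}}{u_n^{(i)}} \;\geq\; \frac{v^{(i)} - r_a/(d-1)}{u^{(i)} + r_b/(d-1)}.
\]
The right-hand side tends to $L$ as $i \to \infty$, so the supremum over primitive gap sequences is at least $L$, completing the argument.

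The main obstacle lies in the setup for the $\geq$ direction: one must justify that a subsequence achieving $L$ can be chosen with $u^{(i)} \to \infty$, so that the additive $r_a, r_b$ corrections in Lemma \ref{lemma2.4} become asymptotically negligible, and that these gaps are eventually big, so that Lemma \ref{lemma2.4} is applicable. The delicate point is that the supremum over primitive gap sequences only captures ratios arising within a single sequence, so if $L$ were realised by gaps scattered across infinitely many different primitive gap sequences, no single $\limsup$ might reach $L$; this is resolved by Lemma \ref{lemma2.4}, whose lower bound shows that any single big gap already forces its entire primitive gap sequence to inherit a comparable limiting ratio. Once these verifications are in place, the rest of the proof is a direct application of the bound in Lemma \ref{lemma2.4}.
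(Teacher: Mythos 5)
Your proposal is correct and follows essentially the same route as the paper: both directions rest on the observation that primitive gap sequences are subsequences of the gaps in $\Phi$ (for $\leq$) and on the lower-bound half of Lemma \ref{lemma2.4} applied to the primitive gap sequence containing a big gap near the extremal ratio (for $\geq$); the paper merely packages the $\geq$ direction as a proof by contradiction rather than a direct limit argument. The points you flag as delicate --- that the extremal gaps are eventually big, that their left endpoints tend to infinity so the $r_a, r_b$ corrections vanish, and that Lemma \ref{lemma2.4} transfers the ratio to a single primitive gap sequence --- are exactly the steps the paper verifies.
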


\begin{proof}

Clearly all primitive gap sequences form a subset of all possible subsequences of the gaps in $\Phi$, so we must have

\begin{align*}
\sup_{[u_0, v_0] \text{ is a primitive gap}}\Big\{ \limsup_{n \to \infty} \frac{v_n}{u_n}\Big\} \cup \{1\} \leq \limsup_{k \to \infty} \frac{d_{k+1}}{d_k}.
\end{align*}
We will prove the result by contradiction. Suppose that we have a strict inequality. That is,
\begin{align*}
\sup_{[u_0, v_0] \text{ is a primitive gap}}\Big\{ \limsup_{n \to \infty} \frac{v_n}{u_n}\Big\} \cup \{1\} < \limsup_{k \to \infty} \frac{d_{k+1}}{d_k}.
\end{align*}
This implies $\limsup_{k \to \infty} d_{k+1}/d_k > 1$, hence there must be at least one big gap in $\Phi$ otherwise, as justified at the start of the chapter, we would see $\limsup_{k \to \infty} d_{k+1}/d_k = 1$. It follows that there is at least one primitive gap and the union with $\{1\}$ can be ignored.\\
\\
Let $(d_{k_n})_{n \geq 0}$ be a subsequence of degrees such that
\begin{align*}
\lim_{n \to \infty} \frac{d_{k_n+1}}{d_{k_n}} = \limsup_{k \to \infty} \frac{d_{k+1}}{d_k}.
\end{align*}
As the ratio tends to a number greater than one, eventually $[d_{k_n}, d_{k_{n}+1}]$ corresponds to big gaps only. Furthermore, as $d_{k_n}$ tends to $\infty$, we of course have
\begin{align*}
\lim_{n \to \infty} \frac{d_{k_n+1}}{d_{k_n}} = \lim_{n \to \infty} \frac{d_{k_n+1} - \frac{r_a}{d-1}}{d_{k_n} + \frac{r_b}{d-1}}.
\end{align*}
Due to the assumption of a strict inequality, fix $n$ such that  $[d_{k_n}, d_{k_{n+1}}]$ is a big gap with $d_{k_n} > \frac{r_a}{d-1}$ and 
\begin{align*}
\frac{d_{k_n+1} - \frac{r_a}{d-1}}{d_{k_n} + \frac{r_b}{d-1}} > \sup_{[u_0, v_0] \text{ is a primitive gap}}\Big\{ \limsup_{n \to \infty} \frac{v_n}{u_n}\Big\}.
\end{align*}
Let $([u_k, v_k])_{k \geq 0}$ be the primitive gap sequence that contains $[d_{k_n}, d_{k_{n}+1}]$. Then by applying Lemma \ref{lemma2.4}, we find

\begin{align*}
\frac{d_{k_n+1} - \frac{r_a}{d-1}}{d_{k_n} + \frac{r_b}{d-1}} &\leq \limsup_{k \to \infty} \frac{v_k}{u_k} \leq \sup_{[u_0, v_0] \text{ is a primitive gap}}\Big\{ \limsup_{n \to \infty} \frac{v_n}{u_n}\Big\}.
\end{align*}
This is an immediate contradiction to how we chose $n$, hence we cannot have a strict inequality. It follows

\begin{align*}
\sup_{[u_0, v_0] \text{ is a primitive gap}}\Big\{ \limsup_{n \to \infty} \frac{v_n}{u_n}\Big\} \cup \{1\} = \lim_{n \to \infty} \frac{d_{k_n+1}}{d_{k_n}}.
\end{align*}
\end{proof}

While we now know that the irrationality exponent formula from Theorem \ref{thm1.10} can be written as
\begin{align*}
\mu(f(b)) = 1 + \sup_{[u_0, v_0] \text{ is a primitive gap}}\Big\{ \limsup_{n \to \infty} \frac{v_n}{u_n}\Big\} \cup \{1\},
\end{align*}
we are yet to show that this can be calculated in practice. For example, there may be an infinite number of primitive gaps in $\Phi$, the positioning of which are not well known, making the supremum over all such gaps difficult to calculate. This can be overcome with the use of the following Lemma.

\begin{Lemma}
\label{lemma2.3}
The size of a primitive gap cannot exceed $\frac{(2d-1)(r_a+r_b)}{d-1}$.
\end{Lemma}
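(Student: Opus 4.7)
The plan is to establish the contrapositive: if $[u,v]$ is a big gap with $v - u > (2d-1)(r_a+r_b)/(d-1)$, then $[u,v]$ cannot be primitive. The main idea is to reverse the direct-successor construction of Lemma \ref{lemma2.1} by exploiting the Mahler equation in the form $f(z^d) = (B(z)f(z) - C(z))/A(z)$, producing an earlier big gap whose direct successor is $[u,v]$.

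First, I will introduce the candidate rational function
\[
\frac{P^*(z)}{Q^*(z)} := \frac{B(z) p_u(z) - C(z) q_u(z)}{A(z) q_u(z)}.
\]
A direct substitution into the Mahler equation yields $Q^*(z) f(z^d) - P^*(z) = B(z)(q_u(z)f(z) - p_u(z))$; by Lemma \ref{lemma1} applied to the gap $[u,v]$, this has degree $r_b - v$, while $\deg Q^* = r_a + u$. Let $P(z)/Q(z)$ be $P^*/Q^*$ reduced to lowest terms, with $\deg \gcd(P^*, Q^*) = r_g^*$. Theorem \ref{criterion} then certifies that $P/Q$ is a convergent of $f(z^d)$ provided $r_b - v - r_g^* < -(r_a + u - r_g^*)$, which simplifies to $v - u > r_a + r_b$; this follows from the hypothesis since $(2d-1)(r_a+r_b)/(d-1) > r_a + r_b$ for all $d \geq 2$.

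Next, I will invoke the structural observation that every convergent of $f(z^d)$ has the form $p_0(z^d)/q_0(z^d)$ for some convergent $p_0/q_0$ of $f$. This follows by running the continued fraction algorithm on $f(z^d) \in \Q((z^{-d}))$: extracting the polynomial part and taking reciprocals both preserve membership in $\Q((z^{-d}))$, so all partial quotients (and hence all convergents) are polynomials in $z^d$. Writing $P(z) = p_0(z^d)$ and $Q(z) = q_0(z^d)$ and setting $u_0 := \deg q_0$, Lemma \ref{lemma1} determines the next element $v_0$ of $\Phi$ after $u_0$ via $d \cdot (-v_0) = \deg(Q f(z^d) - P) = r_b - v - r_g^*$, giving the explicit formulas $u_0 = (u + r_a - r_g^*)/d$ and $v_0 = (v - r_b + r_g^*)/d$. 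A short algebraic check using the identity $(A(z)P(z) + C(z)Q(z))/(B(z)Q(z)) = p_u(z)/q_u(z)$ (obtained after the $AB$-cancellation in the numerator) verifies that the direct successor of $[u_0, v_0]$ is precisely $[u,v]$.

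Finally, I will check that $[u_0, v_0]$ is itself a big gap, contradicting primitivity of $[u,v]$. Computing
\[
v_0 - u_0 = \frac{v - u - (r_a + r_b) + 2r_g^*}{d},
\]
the big-gap condition $v_0 - u_0 > (r_a+r_b)/(d-1)$ rearranges to $v - u > (2d-1)(r_a+r_b)/(d-1) - 2r_g^*$, which holds under the hypothesis since $r_g^* \geq 0$. The main obstacle I anticipate is the careful justification of the structural observation that every convergent of $f(z^d)$ lies in $\Q[z^d] \times \Q[z^d]$; this is the crucial fact that upgrades the reverse construction from a mere good approximation of $f(z^d)$ to an actual convergent of $f$, thereby ensuring $u_0 \in \Phi$.
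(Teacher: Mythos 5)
Your proof is correct, but it runs the argument in the opposite direction from the paper and rests on a structural fact the paper never uses. The paper assumes $[u,v]$ is primitive and oversized, locates a candidate predecessor $[s,t]$ arithmetically (as the gap of $\Phi$ containing the largest $w$ with $dw < v - r_b$), deduces from primitivity that $[s,t]$ cannot be big, and then needs a somewhat delicate division-with-remainder argument (writing $q = aq_u + r$ and showing $r = c = 0$ by degree estimates) to prove that the forward Mahler image of $p_s/q_s$ is exactly $p_u/q_u$; the size bound then follows from $t-s \le (r_a+r_b)/(d-1)$. You instead prove the contrapositive by inverting the Mahler map: $P^*/Q^* = (Bp_u - Cq_u)/(Aq_u)$ approximates $f(z^d)$ to order $r_b - v$, Theorem \ref{criterion} certifies the reduced fraction as a convergent of $f(z^d)$, and the observation that the continued fraction algorithm commutes with $z \mapsto z^d$ (so the convergents of $f(z^d)$ are exactly $p_n(z^d)/q_n(z^d)$) places the predecessor gap $[u_0,v_0]$ in $\Phi$; your degree bookkeeping then correctly shows $[u_0,v_0]$ is big, so $[u,v]$ is not primitive. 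What your route buys is the elimination of the remainder argument and of the ad hoc choice of $w$; what it costs is the auxiliary lemma on convergents of $f(z^d)$, which you rightly flag as the crux and which does go through: composition with $z^d$ is an injective ring homomorphism of $\Q((z^{-1}))$ that commutes with taking polynomial parts and reciprocals, so the partial quotients of $f(z^d)$ are $a_n(z^d)$ and the convergents are $p_n(z^d)/q_n(z^d)$ by the recursion of Proposition \ref{prop1}. Two small points to settle when writing this up: the identification $P = c\,p_n(z^d)$, $Q = c\,q_n(z^d)$ holds only up to a nonzero constant, which is harmless for all the degree computations and for the simplified fraction $(AP+CQ)/(BQ) = p_u/q_u$; and you should note that $v_0 = (v - r_b + r_g^*)/d < v$, so that $[u_0,v_0]$ is genuinely an \emph{other} big gap, as the definition of primitivity requires.
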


\begin{proof}
We will prove this by contradiction. Suppose there is a primitive gap $[u, v]$ in $\Phi$ such that $v-u > \frac{(2d-1)(r_a+r_b)}{d-1}$. Fix $w$ to be the largest integer such that $dw < v-r_b$, hence $d(w+1) \geq v-r_b$. As $v > \frac{(2d-1)(r_a+r_b)}{d-1} \geq r_b$, it follows $w \geq 0$, so there exists a gap in $\Phi$, $[s, t]$ such that $s \leq w < t$.\\
\\
\underline{Claim:} $[s, t]$ is not a big gap.\\
\\
If $[s, t]$ were a big gap, then by Lemma \ref{lemma2.1}, the rational function
\begin{align*}
\frac{A(z)p_s(z^d) + C(z)q_s(z^d)}{B(z)q_s(z^d)},
\end{align*}
is a convergent of $f$. Furthermore, by the working of Lemma \ref{lemma2.1}, the gap associated with this convergent is $[ds+r_b-r_g, dt-r_a+r_g]$. We can observe that this gap must intersect the gap $[u, v]$ as
\begin{align*}
ds+r_b-r_g \leq dw+r_b < v,
\end{align*}
and,
\begin{align*}
dt-r_a+r_g \geq d(w+1) - r_a \geq v-r_b-r_a > u,
\end{align*}
where the last inequality follows by the initial assumption on $[u, v]$. \\
\\
As $[u, v]$ and $[ds+r_b-r_g, dt-r_a+r_g]$ are both gaps of $\Phi$ that intersect each other, they must be the same gap. We immediately conclude that $[u, v]$ is the direct successor of $[s, t]$ by definition, and hence $[u, v]$ is not a primitive gap, contradicting the assumption of the lemma. This proves our claim that $[s, t]$ is not a big gap. As $w$ is between $s$ and $t$, we also have $0 \leq w-s \leq \frac{r_a+r_b}{d-1}$\\
\\
Now let $p(z) = A(z)p_s(z^d) + C(z)q_s(z^d)$ and $q(z) = B(z)q_s(z^d)$. Using (\ref{Mahlerapply}), the definition of $w$ and the fact $[s, t]$ is not a big gap, we have

\begin{align*}
\deg(q(z)f(z) - p(z)) &= r_a - dt\\
&\leq r_a -d(s+1)\\
&\leq r_a - d(w-\frac{r_a+r_b}{d-1} + 1)\\
&\leq r_a - (v-r_b) + \frac{d(r_a+r_b)}{d-1}\\
&= \frac{(2d-1)(r_a+r_b)}{d-1} - v\\
&< -u.
\end{align*}
By the classical remainder theorem, we can write $q(z) = a(z)q_u(z) + r(z)$ where $a(z), r(z) \in \Q[z]$ with $\deg(r(z)) < \deg(q_u(z)) = u$. Also let $c(z) = p(z) - a(z)p_u(z)$. As $p_u(z)/q_u(z)$ is a convergent of $f$, Lemma \ref{lemma1} gives us
\begin{align*}
\deg(a(z)q_u(z)f(z) - a(z)p_u(z)) &= \deg(a(z)) - v\\
&= \deg(q(z)) - \deg(q_u(z)) - v\\
&= r_b + ds - u - v\\
&\leq r_b+dw - u - v < -u.
\end{align*}
\underline{Claim:} $r(z) = c(z) = 0$\\
\\
Again, we show this claim is true by contradiction. If $r(z) \neq 0$, then $c(z)/r(z)$ is a rational function with $\deg(r(z)) < \deg(q_u(z)) = u$. If $u = 0$, then we must have $r(z) = 0$. Now assume $u > 0$. If $c(z)/r(z)$ is a convergent of $f(z)$, then $\deg(r(z)) \leq \deg(q_{u'}(z))$ where $[u', u]$ is the gap in $\Phi$ preceding $[u, v]$. As successive convergents form better approximations by Lemma \ref{lemma1}, we have $p_{u'}(z)/q_{u'}(z)$ forming an equal or better approximation for $f(z)$ than $c(z)/r(z)$, hence
\begin{align*}
\deg(r(z)f(z)-c(z)) &\geq \deg(q_{u'}(z)f(z) - p_{u'}(z))\\
&= -u.
\end{align*}
In the case that $c(z)/r(z)$ is not a convergent of $f(z)$, then by Theorem \ref{criterion},

\begin{align*}
\deg(r(z)f(z)-c(z)) \geq -\deg(r(z)) > -u.
\end{align*}
It follows that in all cases, $\deg(r(z)f(z)-c(z)) \geq -u$, hence

\begin{align*}
\deg(q(z)f(z)-p(z)) &= \deg\Big((a(z)q_u(z)f(z) - a(z)p_u(z)) + (r(z)f(z) - c(z))\Big)\\
&= \deg(r(z)f(z) - c(z)) \geq -u.
\end{align*}
This is a contradiction to the fact that $\deg(q(z)f(z)-p(z)) < -u$ as we previously verified, thus $r(z) = 0$. If $c(z) \neq 0$, then we of course have $\deg(r(z)f(z)-c(z)) = \deg(c(z)) \geq 0 \geq -u$. Repeating the previous line of working gives the same contradiction, hence $c(z) = 0$, proving the claim.\\
\\
It follows from the claim that $p(z)/q(z) = p_u(z)/q_u(z)$. As in Lemma \ref{lemma2.1}, let $G(z) = \gcd(A(z)p_s(z^d) + C(z)q_s(z^d), B(z)q_s(z^d))$ and $r_g = \deg(G(z))$. Recall from Corollary \ref{corollay}, $\gcd(p_u(z), q_u(z)) = 1$, hence

\begin{align*}
p_u(z) &= \frac{A(z)p_s(z^d) + C(z)q_s(z^d)}{G(z)}\\
q_u(z) &= \frac{B(z)q_s(z^d)}{G(z)}.
\end{align*}
Now again comparing degrees and using (\ref{Mahlerapply}), $u = r_b + ds - r_g$ and $v = dt-r_a+r_g$. Recall from the proof of Lemma \ref{lemma2.4}, we showed $0 \leq r_g \leq r_a+r_b$, so it follows that the size of the primitive gap $[u, v]$ is

\begin{align*}
v-u &= dt-r_a+r_g - r_b - ds + r_g\\
&\leq d(t-s) + r_a + r_b\\
&\leq \frac{d(r_a+r_b)}{d-1} + r_a + r_b\\
&= \frac{(2d-1)(r_a+r_b)}{d-1}.
\end{align*}
This is a direct contradiction to the initial assumption on the size of the primitive gap $[u, v]$, hence we must conclude that there exists no primitive gap $[u, v]$ such that $v-u > \frac{(2d-1)(r_a+r_b)}{d-1}$.

\end{proof}

Although it may not seem obvious, we now have a clear method to compute the irrationality exponent if we know there is at least one big gap in $\Phi$\footnote{Of course the irrationality exponent is 2 if there are no big gaps, however we have not developed the tools to rigorously prove there are no big gaps in $\Phi$ for a given Mahler function.}. If there is at least one big gap in $\Phi$, there must be at least one primitive gap.  Let $[u_0, v_0]$ be the first big gap in $\Phi$ such that $u_0 \neq 0$. By Lemma \ref{lemma2.4}, we can bound how much the primitive gap sequence containing $[u_0, v_0]$ contributes to the irrationality exponent.

\begin{align*}
\limsup_{n \to \infty} \frac{v_n}{u_n} \geq \frac{v_0 - \frac{r_a}{d-1}}{u_0 + \frac{r_b}{d-1}}.
\end{align*}
By Lemma \ref{lemma2.3}, the size of a primitive gap in $\Phi$ is bounded from above by a constant, which we call $S$. Let $[u, v]$ be any primitive gap that occurs after $[u_0, v_0]$. As $[u_0, v_0]$ occurs first and is itself a big gap we must have $u \geq v_0 > \frac{r_a}{d-1}$. Applying Lemma \ref{lemma2.4}, the primitive gap sequence from $[u, v]$ can only contribute to the supremum if

\begin{align*}
\frac{u + S + \frac{r_b}{d-1}}{u - \frac{r_a}{d-1}} > \frac{v_0 - \frac{r_a}{d-1}}{u_0 + \frac{r_b}{d-1}}.
\end{align*}
Note the right hand side of this inequality is strictly larger than one since $v_0 - u_0 > \frac{r_a + r_b}{d-1}$, but the left hand side will tend to one from above as $u$ increases. This implies our inequality is an upper bound for $u$, and hence there are only finitely many primitive gap sequences that could possibly contribute to the irrationality exponent, and the possible locations of the starting primitives are specified by this inequality. To compute the irrationality exponent, all we are left to do is evaluate
\begin{align*}
1 + \limsup_{n \to \infty}\frac{v_n}{u_n}.
\end{align*}
for a finite number of primitive gap sequences $([u_n, v_n])_{n \geq 0}$. Since $u_{n+1}$ and $v_{n+1}$ can be expressed in terms of $u_n$ and $v_n$ for all $n \geq 0$, computing this limit is usually achievable in practice.

\chapter{A Worked Example}
We will now focus on an example application of our work so far. We have outlined the procedure to determine how to calculate the irrationality exponent of many Mahler numbers associated with (\ref{Mahler}), but our procedure relies on the fact that:
\begin{itemize}
    \item We can find one big gap in $\Phi(f)$.
    \item $r_{g, n} = \deg( \gcd(A(z)p_n(z^d) + C(z)q_n(z^d), B(z)q_n(z^d)))$ is known for all $n \geq 0$ and for each primitive gap sequence that contributes to the irrationality exponent.
\end{itemize}
These facts can only be checked in a case by case manner. In this chapter, we will explore the irrationality exponent of Mahler numbers when $A(z), B(z)$ and $C(z)$ are linear polynomials and $d = 3$. Handling the above concerns for all possible Mahler functions of this form still remains very difficult, and in the interests of brevity, it is not worthwhile managing these issues across all examples. Instead, we will only focus on examples where $\Phi(f)$ has big gaps that occur quickly, and the $r_{g, n}$ sequence can be computed with relative ease, handling both of the initial concerns.\\
\\
Suppose $f(z) \in \Q((z^{-1}))$ satisfies
\begin{align*}
f(z) &= \frac{(a_1z + a_0)f(z^3) + (c_1z+c_0)}{z+b_0},
\end{align*}
where $a_1, a_0, b_0, c_1, c_0 \in \Q$, $a_1z+a_0 \neq 0$ and $c_1z + c_0 \neq 0$. Checking what the degree of $f$ may be first,
\begin{align*}
1 + \deg(f(z)) = \deg((a_1z + a_0)f(z^3) + (c_1z+c_0)).
\end{align*}
There cannot be any solution for $f$ if $\deg(f(z)) > 0$ or $\deg(f(z)) < 0$, so we must have $\deg(f(z)) = 0$. Now expanding the function into a formal Laurent series and letting $A_i \in \Q$ denote the coefficients,

\begin{align*}
(z + b_0)\Big(A_0 + \frac{A_1}{z} + \frac{A_2}{z^2} +  ... \Big) = (a_1z+a_0)\Big(A_0 + \frac{A_1}{z^3} + \frac{A_2}{z^6} + ...\Big) + c_1z+c_0.
\end{align*}
By comparing the coefficients of each formal Laurent series, we find $A_0 = a_1A_0 + c_1$, and $b_0A_0 + A_1 = a_0A_0 + c_0$. Additionally, for $n \geq 1$,
\begin{align*}
b_0A_n + A_{n+1} &= \begin{cases}
a_0A_{n/3} &\text{ if $n \equiv 0 \pmod{3}$}\\
0 &\text{ if $n \equiv 1 \pmod{3}$}\\
a_1A_{(n+1)/3} &\text{ if $n \equiv 2 \pmod{3}$.}
\end{cases}
\end{align*}
Note that if $a_1 \neq 1$, $A_0$ is uniquely determined and $A_{n+1}$ can be written in terms of the terms that come before it. It follows that there is a unique solution for $f(z)$ in this case. Furthermore, as $A_{n+1}$ can be expressed as a linear combination of $A_1, A_2, ..., A_n$, we must have $A_n$ being a multiple of $A_1$ for all $n \geq 1$. This means we need $A_1 \neq 0$ for $f$ to be a non-rational function. As rational functions are not interesting, we will only study cases where,
\begin{align*}
A_1 = (a_0-b_0)A_0 + c_0 \neq 0.
\end{align*}
Recall, by definition, the first convergent of $f$ is $p_0(z)/q_0(z) = A_0$. Furthermore, as $A_1 \neq 0$. $\deg(f(z)-A_0) = -1$, hence by Lemma \ref{lemma1}, the next convergent must have a linear denominator, and the first gap in $\Phi(f)$ is $[0, 1]$.\\
\\
As we are interested in when $\Phi$ has large gaps quickly, we will search for conditions for when $[1, 2]$ is not a gap in $\Phi$, meaning the gap following $[0, 1]$ has a size of at least two. Let $q_1(z) = z+d_0$ after rescaling by a constant. Again by Lemma \ref{lemma1}, for this convergent to correspond to a gap of size at least two, we need the existence of a linear polynomial $p_1(z)$ such that $\deg(q_1(z)f(z)-p_1(z)) \leq -3$. That is, 
\begin{align*}
\deg\Big((z+d_0)(A_0 + \frac{A_1}{z} + \frac{A_2}{z^2} + ...) - p_1(z)\Big) \leq -3.
\end{align*}
As $p_1(z)$ is a polynomial, it is fixed to be $\lfloor q_1(z)f(z)\rfloor$ in order for the above expression to have negative degree. We also require the coefficient for $1/z$ and $1/z^2$ to be zero. This leads to the equations,
\begin{align*}
d_0A_1 + A_2 &= 0\\
d_0A_2 + A_3 &= 0.
\end{align*}
Using the recursive formula for the first few coefficients of $f$ and using the fact that $A_n$ is a multiple of $A_1 \neq 0$ for $n \geq 1$, we find

\begin{align*}
d_0  -b_0 &= 0\\
-b_0d_0 + a_1+b_0^2 &= 0.
\end{align*}
This gives the conditions $d_0 = b_0$ and $a_1 = 0$. If we desire a gap larger than this, it is further required that the $1/z^3$ coefficient is zero as well. By similar working, this translates to the condition,
\begin{align*}
(a_1+b_0^2)d_0 + (a_0-b_0^3-a_1b_0) = 0.
\end{align*}
This gives $a_0 = 0$, however this is not desired as it results in $A(z) = 0$, and hence $f(z)$ is a rational function. It follows the next gap in $\Phi$ is $[1, 3]$ in the case $a_1 = 0$ and $a_0 \neq 0$.\\
\\
This is enough working to produce an example of interest.

\begin{Theorem}
\label{application}
Let $f(z) \in \Q((z^{-1}))$ be the unique formal Laurent series that satisfies
\begin{align}
\label{example}
f(z) = \frac{a_0f(z^3) + c_1z + c_0}{z + 1},
\end{align}
where $a_0, c_1, c_0 \in \Q$ such that
\begin{enumerate}
    \item $(a_0-1)c_1 + c_0 \neq 0$.\\
    \item $a_0 \neq 0$.\\
\end{enumerate}
For any $b \in \Z$, $|b| \geq 2$ inside the disc of convergence of $f$, we have $\mu(f(b)) = 3$. Additionally, if conditions (a) or (b) are not met, $f(z)$ is a rational function.

\end{Theorem}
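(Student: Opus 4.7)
The plan is to verify the hypotheses of Theorem \ref{thm1.10} and then apply Lemma \ref{lemma2.5} to reduce the computation of $\mu(f(b))$ to an analysis of the primitive gap sequences of $\Phi(f)$. First I would handle the degenerate cases: from the pre-theorem analysis, $A_0 = c_1$ and $A_1 = (a_0 - 1) c_1 + c_0$, and each later $A_n$ is determined linearly by earlier coefficients. If (a) fails then $A_1 = 0$ forces $A_n = 0$ for all $n \geq 1$, so $f(z) = c_1$ is rational; if (b) fails then (\ref{example}) reduces to $f(z) = (c_1 z + c_0)/(z+1)$, also rational. Under (a) and (b), one has $A(b^{3^t}) B(b^{3^t}) = a_0(b^{3^t} + 1) \neq 0$ for every $t \in \N$ when $b \in \Z$ with $|b| \geq 2$, so Theorem \ref{thm1.10} applies.

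From the worked analysis preceding the statement, the first two convergents are $p_0/q_0 = c_1$ and $p_1/q_1 = (c_1 z + a_0 c_1 + c_0)/(z + 1)$, and the first two gaps in $\Phi$ are $[0, 1]$ and $[1, 3]$. Here $r_a = 0$, $r_b = 1$, $d = 3$, so every gap is big, and by Lemma \ref{lemma2.3} every primitive gap has size at most $\frac{5}{2}$. The gap $[0, 1]$ is primitive, and I would build its primitive gap sequence $([u_n, v_n])_{n \geq 0}$ iteratively using Lemma \ref{lemma2.1}.

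The principal obstacle is showing that $r_{g,n} := \deg G_n = 0$ at every stage, where $G_n = \gcd\bigl(a_0 p_{u_n}(z^3) + (c_1 z + c_0) q_{u_n}(z^3),\, (z+1) q_{u_n}(z^3)\bigr)$. By Corollary \ref{corollay}, $p_{u_n}$ and $q_{u_n}$ are coprime, and this is preserved under $z \mapsto z^3$: any common root of $p_{u_n}(z^3)$ and $q_{u_n}(z^3)$ would cube to a common root of $p_{u_n}$ and $q_{u_n}$. So any nontrivial factor of $G_n$ must divide $z + 1$. Using the successor formulas $p_{u_{n+1}}(z) = a_0 p_{u_n}(z^3) + (c_1 z + c_0) q_{u_n}(z^3)$ and $q_{u_{n+1}}(z) = (z+1) q_{u_n}(z^3)$, a direct induction yields $q_{u_n}(-1) = 0$ for $n \geq 1$ and $p_{u_n}(-1) = a_0^{n-1} A_1$ for $n \geq 1$; the numerator at $z = -1$ is then $a_0^n A_1$, nonzero by (a) and (b). Thus $z + 1$ never divides the numerator, $r_{g,n} = 0$ throughout, and the recursion reduces to $u_{n+1} = 3 u_n + 1$, $v_{n+1} = 3 v_n$. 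Starting from $(u_0, v_0) = (0, 1)$ gives $u_n = (3^n - 1)/2$ and $v_n = 3^n$, so $\limsup_{n \to \infty} v_n/u_n = 2$.

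To finish, I would rule out other primitive gap sequences contributing more than $2$, using the bound derived just before Chapter 4. Any primitive gap $[u, v]$ occurring after $[1, 3]$ satisfies $u \geq 3$, and the condition for it to contribute more than $2$ reduces, via Lemma \ref{lemma2.4} and $S = \frac{5}{2}$, to $(u + \frac{5}{2} + \frac{1}{2})/u > 2$, i.e., $u < 3$, a contradiction. Primitive gaps with $u < 3$ are handled directly: $[0, 1]$ is already in our sequence, $[1, 3]$ is its direct successor (hence not primitive), and $2 \notin \Phi$ so no gap starts at $u = 2$. Thus $[0, 1]$ is the only primitive gap, and Theorem \ref{thm1.10} together with Lemma \ref{lemma2.5} gives $\mu(f(b)) = 1 + 2 = 3$.
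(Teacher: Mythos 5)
Your overall route is the same as the paper's: identify $[0,1]$ as the first (and only relevant) primitive gap, prove $r_{g,n}=0$ by induction so that $u_{n+1}=3u_n+1$, $v_{n+1}=3v_n$, and rule out contributions from later primitive gaps via Lemmas \ref{lemma2.3} and \ref{lemma2.4}. Your induction for $r_{g,n}=0$ is a slightly cleaner packaging of the paper's: you track the explicit values $q_{u_n}(-1)=0$ and $p_{u_n}(-1)=a_0^{n-1}A_1$, whereas the paper argues by contradiction that a root at $z=-1$ of the new numerator would force one at the previous stage; both rest on the same observation that any nontrivial factor of $G_n$ must be $z+1$ (by Corollary \ref{corollay} and the Euclidean algorithm). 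Your exclusion of other primitive gaps ($u<3$ versus the paper's $u<5/2$) is marginally weaker but still sufficient since you separately note $2\notin\Phi$.

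There is one genuine gap. Theorem \ref{thm1.10} requires that $f$ is \emph{not} a rational function, and you never verify this hypothesis. What you prove in your first paragraph is the converse direction needed for the ``additionally'' clause: if (a) or (b) fails, then $f$ is rational. That does not yield ``if (a) and (b) hold, then $f$ is irrational,'' and without it the appeal to Theorem \ref{thm1.10} (and to the whole gap machinery, whose definitions presuppose an infinite sequence of convergents) is unjustified. The paper spends roughly the first third of its proof on exactly this point: supposing $f=P(z)/Q(z)$ in lowest terms, a degree count on $\gcd\bigl(a_0P(z^3)+(c_1z+c_0)Q(z^3),\,(z+1)Q(z^3)\bigr)$ shows at most one factor can cancel, forcing $\deg Q=0$, hence $f$ constant, and then comparing coefficients in (\ref{example}) gives $c_1(a_0-1)+c_0=0$, contradicting (a). You need this (or an equivalent argument) before invoking Theorem \ref{thm1.10}; one cannot simply bootstrap irrationality from the existence of infinitely many gaps, since the gap construction itself is only defined once the continued fraction of $f$ is known not to terminate.
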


We have fixed the free variable $b_0 = 1$. This was done so the $r_{g, n}$ sequence is easier to manage, as we will observe in the following proof.

\begin{proof}
First, we need to check if $f(z)$ is a rational function, as there is no sense in applying the working of the previous chapters in this case. We use contradiction. Suppose $f(z) = P(z)/Q(z)$ where $P(z)$ and $Q(z)$ are coprime. (\ref{example}) gives us
\begin{align*}
\frac{P(z)}{Q(z)} = \frac{a_0P(z^3) + (c_1z+c_0)Q(z^3)}{(z+1)Q(z^3)}.
\end{align*}
As the numerator and denominator on the left hand side are coprime, there must be some factors to cancel on the right hand side so the expression is the same after simplification. Letting $r_q$ be the degree of $Q(z)$, the degree of the canceled terms must be $3r_q + 1 - r_q = 2r_q + 1$. Note however, the highest degree of cancellation is given by
\begin{align*}
&\deg(\gcd(a_0P(z^3) + (c_1z+c_0)Q(z^3), (z+1)Q(z^3)))\\
&\leq \deg(\gcd(a_0P(z^3) + (c_1z+c_0)Q(z^3), Q(z^3)) (z+1))\\
&\leq \deg(\gcd(aP(z^3), Q(z^3))) + 1\\
&= 1.
\end{align*}
It follows that at most, $r_q = 0$, as otherwise it is impossible to cancel enough terms from the numerator and denominator. This means that $Q(z)$ is a constant, and hence our solution for $f(z)$ must be a polynomial, $P(z)$. As we have already verified $\deg(f(z)) = 0$, we conclude that $f(z) = f$ is a constant. Rearranging (\ref{example}) with this in mind,
\begin{align*}
zf + f = c_1z + a_0f + c_0.
\end{align*}
By comparing coefficients, we have $f = c_1$ and $f(a_0-1)+c_0 = 0$, hence $c_1(a_0-1)+c_0 = 0$. This however contradicts condition (a), hence we cannot have $f(z)$ as a constant, and we finally conclude that $f(z)$ is not a rational function. We can now proceed with finding the irrationality exponent.\\
\\
By our previous working, we already have $A_0 = c_1$ and $A_1 = (a_0-1)c_1 + c_0 \neq 0$ by assumption. Furthermore, the first convergent is $p_0(z)/q_0(z) = c_1$, and the first two gaps of $\Phi$ are $[0, 1]$ and $[1, 3]$.\\
\\
Now, using the procedure outlined at the end of the previous chapter, we start by searching for the first big gap in $\Phi$. The condition to have a big gap is that the size of the gap is at least $\frac{r_a+r_b}{d-1} = \frac{1}{2}$, hence all gaps in $\Phi$ are big, and $[0, 1]$ is the first primitive gap.\\
\\
By Lemma \ref{lemma2.3}, the size of a primitive gap is bounded by $\frac{(2d-1)(r_a+r_b)}{d-1} = \frac{5}{2}$, hence the largest size a primitive gap could be is 2. If $[u, v]$ is any primitive gap that is not $[0, 1]$, then, by Lemma \ref{lemma2.4}, the primitive gap sequence from $[u, v]$ cannot affect the irrationality exponent, except when
\begin{align*}
\frac{u + 2 + \frac{1}{2}}{u - 0} > \frac{1 - 0}{0 + \frac{1}{2}} = 2.
\end{align*}
Solving this inequality for $u \geq 1$, we find that $u < 5/2$, hence $u \leq 2$. It follows that $[1, 3]$ may affect the irrationality exponent if it is a primitive gap, otherwise only the primitive gap sequence from $[0, 1]$ will give the irrationality exponent.\\
\\
By Lemma \ref{lemma2.1}, $[0, 1]$ being a big gap corresponding to the convergent $p_0(z)/q_0(z) = c_1$ implies that
\begin{align*}
\frac{a_0(c_1) + (c_1z+c_0)(1)}{(z+1)(1)} = \frac{c_1z + a_0c_1 + c_0}{z+1}.
\end{align*}
is a convergent of $f(z)$. There is cancellation if $c_1 = a_0c_1+c_0$, however by condition (a) is not true, hence there is no cancellation. As the degree of the denominator is 1, this must correspond to the $[1, 3]$ gap, implying that the $[1, 3]$ gap is not a primitive gap in $\Phi$ by definition and is the direct successor of $[0, 1]$. It follows the only primitive gap sequence that contributes to the irrationality exponent is the sequence from $[0, 1]$.\\
\\
Let $([u_n, v_n])_{n \geq 0}$ be the primitive gap sequence from $[0, 1]$. We claim that $r_{g, n} = 0$ for all $n \geq 0$. That is,
\begin{align*}
\gcd(a_0p_{u_n}(z^3) + (c_1z+c_0)q_{u_n}(z^3), (z+1)q_{u_n}(z^3)) = 1.
\end{align*}
We show this by induction. For $n = 0$, this is relatively straightforward as using (a),
\begin{align*}
\gcd(a_0p_{u_0}(z^3) + (c_1z+c_0)q_{u_0}(z^3), (z+1)q_{u_0}(z^3)) &= \gcd(c_1z + a_0c_1 + c_0, z+1) = 1.
\end{align*}
Assume this holds for all $k \leq n$ for some fixed $n \geq 0$. This means there is no cancellation when computing the direct successor of each convergent, hence for all $k \leq n$,

\begin{align*}
p_{u_{k+1}}(z) &= a_0p_{u_{k}}(z^3) + (c_1z+c_0)q_{u_k}(z^3)\\
q_{u_{k+1}}(z) &= (z+1)q_{u_k}(z^3).
\end{align*}
Applying this and the Euclidean algorithm,

\begin{align*}
&\gcd(a_0p_{u_{n+1}}(z^3) + (c_1z+c_0)q_{u_{n+1}}(z^3), (z+1))\\
&= \gcd(a_0(a_0p_{u_{n}}(z^9) + (c_1z^3+c_0)q_{u_{n}}(z^9)) + (c_1z+c_0)(z^3+1)q_{u_{n}}(z^9), (z+1))\\
&= \gcd(a_0p_{u_{n}}(z^9) + (c_1z^3+c_0)q_{u_{n}}(z^9), (z+1)).
\end{align*}
Now suppose that this $\gcd$ is not constant, so $z+1$ is a factor of the first argument. This implies

\begin{align*}
a_0p_{u_{n}}((-1)^9) + (c_1(-1)^3+c_0)q_{u_{n}}((-1)^9) = 0.
\end{align*}
As $(-1)^9 = (-1)^3 = -1$, we also find that $z + 1$ is a factor of $a_0p_{u_n}(z^3) + (c_1z+c_0)q_{u_n}(z^3)$, which contradicts the inductive hypothesis for $k = n$. This implies that $z+1$ is coprime to $a_0p_{u_{n+1}}(z^3) + (c_1z+c_0)q_{u_{n+1}}(z^3)$. Additionally, by the Euclidean algorithm and Corollary \ref{corollay},

\begin{align*}
&\gcd(a_0p_{u_{n+1}}(z^3) + (c_1z+c_0)q_{u_{n+1}}(z^3), q_{u_{n+1}}(z^3))\\
&= \gcd(a_0p_{u_{n+1}}(z^3), q_{u_{n+1}}(z^3)) = 1.
\end{align*}
Putting this together, we have proved the inductive step and hence completed the induction as

\begin{align*}
\gcd(a_0p_{u_{n+1}}(z^3) + (c_1z+c_0)q_{u_{n+1}}(z^3), (z+1)q_{u_{n+1}}(z^3)) = 1.
\end{align*}
As $r_{g, n} = 0$ for all $n \geq 0$, we have from (\ref{lem2.4u}) and (\ref{lem2.4v}),
\begin{align*}
u_{n+1} &= du_n + r_b = 3u_n + 1\\
v_{n+1} &= dv_n - r_a = 3v_n.
\end{align*}
As $u_0 = 0$ and $v_0 = 1$, we can quickly find closed formulae for $u_n$ and $v_n$
\begin{align*}
u_n &= \frac{3^n-1}{2}\\
v_n &= 3^n.\\
\end{align*}
It follows that taking $n \to \infty$,
\begin{align*}
\limsup_{n \to \infty} \frac{v_n}{u_n} &= 2\lim_{n \to \infty}\frac{3^n}{3^n-1} = 2.
\end{align*}
To properly apply Theorem \ref{thm1.10}, we must check $A(b^{3^t})B(b^{3^t}) \neq 0$ for all integers $t \geq 0$, however as $A$ is a non-zero constant and $B$ only has a root at $z = -1$, this is obvious for any $|b| \geq 2$. It follows that the irrationality exponent of $f(b)$, assuming $f$ converges at $b$, is
\begin{align*}
\mu(f(b)) = 1 + 2 = 3.
\end{align*}

\end{proof}


\chapter{The Set of Irrationality Exponents}
While we have established how to compute the irrationality exponent of many Mahler numbers originating from (\ref{Mahler}), a final remaining question is what are the possible values of irrationality exponents. More specifically, in the context of the original question proposed by Adamczweski and Rivoal, it is possible to show the irrationality exponent of an irrational Mahler number is always rational. From the work of Chapters 2 and 3, we have found that under the conditions of Theorem \ref{thm1.10}, if $\mu(f(b)) > 2$, then

\begin{align*}
\mu(f(b)) = 1 + \sup_{[u_0, v_0] \text{ is a primitive gap}} \Big\{\limsup_{n \to \infty }  \frac{v_n}{u_n}\Big\}.
\end{align*}
Furthermore, only finitely many primitive gap sequences could contribute to this supremum and we have recursive formulae for $u_{n+1}$ and $v_{n+1}$ involving $d, r_a, r_b$ and $r_{g, n}$. The behaviours of each of these terms are well known, with the exception of 

\begin{align*}
r_{g, n} := \gcd(A(z)p_{u_n}(z^d) + C(z)q_{u_n}(z^d), B(z)q_{u_n}(z^d)).
\end{align*}
While we can bound $r_{g, n}$ between 0 and $r_a+r_b$, it's exact value for each $n$ can only be computed on a case by case basis. Badziahin has proven that in the case where $C(z) = 0$, the sequence is eventually periodic \cite{badziahin}. Suppose that this is also true in the case where $C(z) \neq 0$. This means there exists a $n_0 \in \N$ and period $P \in \N$ such that $r_{g, n_0 + k} = r_{g, n_0 + k + P}$ for all $k \geq 0$. Using (\ref{lem2.4u}),
\begin{align*}
u_{n_0 + P} &= du_{n_0 + P-1} + r_b - r_{g, n_0+P-1}\\
&= d^2u_{n_0 + P-2} + r_b(d + 1) - dr_{g, n_0+P-2} - r_{g, n_0+P-1}\\
&\vdots\\
&= d^P u_{n_0} + r_b\frac{d^P-1}{d-1} - \sum_{k=0}^{P-1} d^{P-1-k}r_{g, n_0+k}.
\end{align*}
Similarly,

\begin{align*}
v_{n_0+P} &= d^P v_{n_0} - r_a \frac{d^P-1}{d-1} + \sum_{k=0}^{P-1} d^{P-1-k}r_{g, n_0+k}.
\end{align*}
If we define the terms $r_u$ and $r_v$ as
\begin{align*}
r_u &= r_b\frac{d^P-1}{d-1} - \sum_{k=0}^{P-1} d^{P-1-k}r_{g, n_0+k}\\
r_v &=  r_a \frac{d^P-1}{d-1} - \sum_{k=0}^{P-1} d^{P-1-k}r_{g, n_0+k},
\end{align*}
then by utilising the fact that $r_{g, n}$ is periodic with period $P$,

\begin{align*}
\lim_{k \to \infty} \frac{v_{n_0 + kP}}{u_{n_0 + kP}} &= \lim_{k \to \infty} \frac{d^{kP}v_{n_0} + (1 + d^P + d^{2P} + ... + d^{(k-1)P})r_v}{d^{kP}u_{n_0} - (1 + d^P + d^{2P} + ... + d^{(k-1)P})r_u}\\
&= \frac{v_{n_0} + \frac{r_v}{d-1}}{u_{n_0} - \frac{r_u}{d-1}}.
\end{align*}
This argument can also be applied to the ratio $v_{n_0 + i + kP}/u_{n_0+i+kP}$ for any $0 \leq i \leq P-1$. By applying a Bolzano-Weierstrass style argument, we note one of these subsequences must contain infinitely many terms of any subsequence of $v_n/u_n$, it immediately follows that the limit superior of $v_n/u_n$ must be equal to the limit of one of these sequences, which is clearly a rational number. Furthermore, as only finitely many primitive gaps contribute to the irrationality exponent, it follows that under the assumptions of Theorem \ref{thm1.10}, the irrationality exponent must be rational.\\
\\
This result is exactly the one desired, however it has only been successfully proven in the case that $C(z) = 0$ by Badziahin \cite{badziahin}. The proof that the sequence $r_{g, n}$ is eventually periodic in the case $C(z) = 0$ relies on analysing the different factors of $A(z)p_{u_n}(z^d) + C(z)q_{u_n}(z^d)$, however this immediately fails in the case $C(z) \neq 0$ as factors are not well preserved in addition, with new factors possibly being produced unexpectedly. This requires us to apply several new techniques, from which we can partially answer this question.\\
\\
The goal of this chapter is to find (and prove) some conditions under which we may expect $r_{g, n}$ to be periodic, thereby allowing the irrationality exponent to be rational. Additionally we desire conditions that can be checked so the user of this work does not need to perform a significant amount of additional working to check if the irrationality exponent of a Mahler number is certainly rational.\\
\\
One rather simplistic pathway to showing the irrationality exponent is rational is given by the following Lemma.

\begin{Lemma}
Let $f(z)$ be a Mahler function that satisfies the conditions of Theorem \ref{thm1.10} and fix $b \in \Z, |b| \geq 2$. Suppose there exists functions $\lambda_1(z), \lambda_2(z) \in \Q[z]$ such $\lambda_1(b^{d^t}) \neq 0$ for all $t \geq 0$ and
\begin{align*}
\lambda_1(z)\lambda_1(z^d)C(z) + \lambda_1(z^d)\lambda_2(z)B(z) = \lambda_1(z)\lambda_2(z^d)A(z).
\end{align*}
Then $\mu(f(b))$ is rational if $f(b)$ is irrational.
\end{Lemma}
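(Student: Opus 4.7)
The plan is to reduce the statement to the already-understood case $C(z) = 0$ by exhibiting a rational shift of $f$ that satisfies a homogeneous Mahler equation. Specifically, I would introduce the auxiliary function
\begin{align*}
g(z) := f(z) + \frac{\lambda_2(z)}{\lambda_1(z)},
\end{align*}
which is a well-defined element of $\Q((z^{-1}))$ at least after passing to a suitable Laurent expansion, and differs from $f(z)$ only by a fixed rational function. The goal is to show $g$ satisfies the homogeneous Mahler equation $g(z) = A(z)g(z^d)/B(z)$.

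First I would manipulate the hypothesis algebraically. Dividing the identity $\lambda_1(z)\lambda_1(z^d)C(z) + \lambda_1(z^d)\lambda_2(z)B(z) = \lambda_1(z)\lambda_2(z^d)A(z)$ through by $\lambda_1(z)\lambda_1(z^d)B(z)$ gives
\begin{align*}
\frac{C(z)}{B(z)} + \frac{\lambda_2(z)}{\lambda_1(z)} = \frac{A(z)}{B(z)} \cdot \frac{\lambda_2(z^d)}{\lambda_1(z^d)}.
\end{align*}
Substituting this into the original Mahler equation (\ref{Mahler}) for $f$ yields
\begin{align*}
g(z) = f(z) + \frac{\lambda_2(z)}{\lambda_1(z)} = \frac{A(z)}{B(z)}\left(f(z^d) + \frac{\lambda_2(z^d)}{\lambda_1(z^d)}\right) = \frac{A(z)}{B(z)} g(z^d),
\end{align*}
so $g$ is a Mahler function of exactly the form that Badziahin has already treated (the case $C(z) = 0$ discussed in the introduction).

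Next I would invoke Badziahin's result that the irrationality exponent of a Mahler number arising from such a homogeneous equation is rational, applied to $g(b)$. The hypotheses needed for his theorem follow from the hypotheses carried over from Theorem \ref{thm1.10} together with $\lambda_1(b^{d^t}) \neq 0$ for all $t \geq 0$, which guarantees that $g$ can be evaluated at $b^{d^t}$ and that $g$ is not a rational function (since $f(b)$ is irrational, $g(b) = f(b) + \lambda_2(b)/\lambda_1(b)$ is also irrational, so $g$ cannot be rational). Finally, since $\lambda_2(b)/\lambda_1(b) \in \Q$, the identity $f(b) = g(b) - \lambda_2(b)/\lambda_1(b)$ shows that $\mu(f(b)) = \mu(g(b))$, as adding a rational number to a real number does not change its irrationality exponent. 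Hence $\mu(f(b))$ is rational.

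The only genuine subtlety is bookkeeping: I would need to verify that the hypotheses of Badziahin's homogeneous theorem really are satisfied by $g$ at $b$ (non-vanishing of $A(b^{d^t})B(b^{d^t})$ is inherited directly from the hypotheses of Theorem \ref{thm1.10}; non-rationality of $g$ follows from irrationality of $f(b)$; the radius of convergence of $g$ at $b$ is automatic since $f(b)$ converges and $\lambda_1(b) \neq 0$). None of these steps requires heavy computation, so the substantive content of the argument is entirely in the algebraic identity that turns the given relation into the homogeneous Mahler equation for $g$.
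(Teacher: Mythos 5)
Your proposal is the same idea as the paper's proof --- use $\lambda_1,\lambda_2$ to absorb $C(z)$ into a homogeneous Mahler equation, invoke Badziahin's $C=0$ result, and finish with the invariance of $\mu$ under rational affine changes --- but your choice of auxiliary function differs from the paper's in a way that creates a gap. You set $g(z) = f(z) + \lambda_2(z)/\lambda_1(z)$, and your algebra correctly yields $g(z) = A(z)g(z^d)/B(z)$ with the \emph{original} $A$ and $B$. The problem is your claim that ``the radius of convergence of $g$ at $b$ is automatic since $f(b)$ converges and $\lambda_1(b)\neq 0$.'' The Laurent expansion of $\lambda_2(z)/\lambda_1(z)$ at infinity converges only for $|z|$ exceeding the modulus of \emph{every} root of $\lambda_1$, which is strictly stronger than $\lambda_1(b^{d^t})\neq 0$; e.g.\ $\lambda_1(z)=z-c$ with $|c|>|b|$ and $c\neq b^{d^t}$ satisfies the hypothesis yet puts $b$ outside the disc of convergence of $g$. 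Since Theorem \ref{thm1.10} and Badziahin's theorem are stated for $b$ inside the disc of convergence of the relevant Laurent series, you cannot apply them to your $g$ at $b$ in that situation. A second warning sign is that your argument never uses the hypothesis $\lambda_1(b^{d^t})\neq 0$ for $t\geq 1$, only for $t=0$.

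The paper avoids both issues by clearing denominators: it takes $g(z) = \lambda_1(z)f(z) + \lambda_2(z)$ (i.e.\ $\lambda_1$ times your $g$), which is a polynomial combination of $f$ and so converges wherever $f$ does. The same hypothesis then gives
\begin{align*}
g(z) = \frac{\lambda_1(z)A(z)\,g(z^d)}{\lambda_1(z^d)B(z)},
\end{align*}
a homogeneous Mahler equation whose new numerator and denominator polynomials are $\lambda_1(z)A(z)$ and $\lambda_1(z^d)B(z)$; the non-vanishing condition for Badziahin's theorem now reads $\lambda_1(b^{d^t})A(b^{d^t})\lambda_1(b^{d^{t+1}})B(b^{d^t})\neq 0$, which is exactly where the full hypothesis $\lambda_1(b^{d^t})\neq 0$ for all $t\geq 0$ is consumed. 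Your argument is repaired by simply multiplying your $g$ through by $\lambda_1(z)$; everything else you wrote (irrationality of $g(b)$ from irrationality of $f(b)$, and $\mu(f(b))=\mu(g(b))$ by invariance under rational scaling and translation) then goes through as in the paper.
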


\begin{proof}
We can immediately show that $g(z) = \lambda_1(z)f(z) + \lambda_2(z)$ is a Mahler function too.

\begin{align*}
\lambda_1(z)f(z) + \lambda_2(z) &= \frac{\lambda_1(z)A(z)f(z^d) + \lambda_1(z)C(z) + \lambda_2(z)B(z)}{B(z)}\\
&= \frac{\lambda_1(z)\lambda_1(z^d)A(z)f(z^d) + \lambda_1(z)\lambda_1(z^d)C(z) + \lambda_2(z)\lambda_1(z^d)B(z)}{\lambda_1(z^d)B(z)}\\
&= \frac{\lambda_1(z)\lambda_1(z^d)A(z)f(z^d) + \lambda_1(z)\lambda_2(z^d)A(z)}{\lambda_1(z^d)B(z)}\\
g(z) &= \frac{\lambda_1(z)A(z)g(z^d)}{\lambda_1(z^d)B(z)}.
\end{align*}
It naturally follows that $g(z)$ is a Mahler function with the corresponding $C(z)$ polynomial equal to zero. As $\lambda_1(b^{d^t}) \neq 0$ for all $t \geq 0$, we have $\lambda_1(b^{d^t})A(b^{d^t})\lambda_1(b^{d^{t+1}})B(b^{d^t}) \neq 0$ for all $t \geq 0$, hence $g(b)$ satisfies the conditions of Theorem \ref{thm1.10}. Noting $\lambda_1(b) \neq 0$ and $\lambda_2(b)$ are rational, we have $g(b) = \lambda_1(b)f(b) + \lambda_2(b)$ being irrational, and hence $\mu(g(b))$ is rational by Badziahin \cite{badziahin}. Using the elementary fact that irrationality exponents are invariant under scalar multiplication by rationals and addition of rationals, we have $\mu(g(b)) = \mu(f(b))$, hence $\mu(f(b))$ is rational.
\end{proof}

While a rather direct approach in applying the work of Badziahin, we cannot expect the functions $\lambda_1(z), \lambda_2(z)$ to exist in all cases\footnote{The example from Theorem \ref{application} is one such case.}, as this Lemma simply implies that it may be possible to transform some Mahler functions to a form where $C(z) \neq 0$ while preserving the irrationality exponent. For this reason, we resort to a second method of attempting to show the $r_{g, n}$ sequence is periodic directly that is much more direct.\\
\\
First observe one can write the second argument in the form
\begin{align*}
B(z)q_{u_n}(z^d) &= B_0(z)B_m(z)B_c(z) q_{u_n, 0}(z^d)q_{u_n, m}(z^d)q_{u_n, c}(z^d),
\end{align*}
where we have factored $B(z)$ and $q_{u_n}(z^d)$ into different parts. $B_0(z)$ corresponds to the powers of $z$ in $B(z)$, $B_m(z)$ corresponds to the non-cyclotomic factors and $B_c(z)$ corresponds to the remaining cyclotomic factors. We also do the same to $q_{u_n}(z^d)$. The cyclotomic polynomials are the monic polynomials that are irreducible over $\Q[z]$ and have roots only on the unit circle, which we discuss in more detail later. Note that these factors are coprime, hence we can write
\begin{align*}
G_n(z) &=  G_{n, 0}(z)G_{n, m}(z)G_{n, c}(z)\\
G_{n, 0}(z) &:= \gcd(A(z)p_{u_n}(z^d) + C(z)q_{u_n}(z^d), B_0(z)q_{u_n, 0}(z^d))\\
G_{n, m}(z) &:= \gcd(A(z)p_{u_n}(z^d) + C(z)q_{u_n}(z^d), B_m(z)q_{u_n, m}(z^d))\\
G_{n, c}(z) &:= \gcd(A(z)p_{u_n}(z^d) + C(z)q_{u_n}(z^d), B_c(z)q_{u_n, c}(z^d)).
\end{align*}
These factors are also independent of each other over $n$. That is, any powers of $z$ in $B_0(z)q_{u_n, 0}(z^d)$ will not influence any of the factors of $B_m(z)q_{u_k, m}(z^d)$ or $B_c(z)q_{u_k, c}(z^d)$ for all $k \in \N$ as if $P(z)$ is a power of $z$, then so is $P(z^d)$. This also holds for the cyclotomic and non-cyclotomic factors, and implies each expression can be managed independently. Furthermore, by taking the degree of $G_n(z)$, we find we have split $r_{g, n}$ into the sum of three sequences $r_{g, n} = r_{g, n, 0} + r_{g, n, m} + r_{g, n, c}$, so it is enough for us to find conditions for which each of these three sequences are eventually periodic, as the sum of a finite number of eventually periodic sequences is eventually periodic.

\section*{Powers of $z$}

If $B_0(z)q_{u_0, 0}(z^d)$ is a constant, then we are done as $q_{u_n, 0}(z) | B_0(z)q_{u_{n-1}, 0}(z^d)$, so each successive term of the sequence $B_0(z)q_{u_n}(z^d)$ will be a constant and $r_{g, n, 0}$ will be the constant zero sequence which is periodic. We will deal with three alternate cases depending on where the powers of $z$ may be distributed.\\
\\
\underline{Case 1:} $B_0(z)$ is constant but $q_{u_0, 0}(z^d)$ is not constant. If for any $k \geq 0$, we find $q_{u_k, 0}(z)$ is constant, then by the fact that $q_{u_{k+1}, 0}(z) | B_0(z)q_{u_k, 0}(z^d)$, we find that $r_{g, n, 0} = 0$ for all $n \geq k$, and hence the sequence is eventually periodic. Now consider the other case, where $q_{u_n, 0}(z^d)$ contains a power of $z$ for all $n \geq 0$. This implies $p_{u_n}(z^d)$ cannot contain a power of $z$ by Corollary \ref{corollay} and hence by the Euclidean algorithm,

\begin{align*}
\gcd(A(z)p_{u_n}(z^d) + C(z)q_{u_n}(z^d), B_0(z)q_{u_n, 0}(z^d)) &= \gcd(A(z)p_{u_n}(z^d), q_{u_n, 0}(z^d))\\
&= \gcd(A(z), q_{u_n, 0}(z^d))
\end{align*}
If $\gcd(A(z), q_{u_n, 0}(z^d)) = q_{u_n, 0}(z^d)$ for some $n$, then we immediately have all powers of $z$ being removed from $q_{u_n}(z^d)$ and $q_{u_{n+1}, 0}(z)$ will contain no powers of $z$, a contradiction to our previous assumption. It follows that for all $n$, $\gcd(A(z), q_{u_n, 0}(z^d)) = a$ where $a$ is such that $z^a | A(z)$ but $z^{a+1}$ does not divide $A(z)$. As $a$ is constant, $r_{g, n, 0} = a$ for all $n \geq 0$ and the sequence is eventually periodic.\\ 
\\
\underline{Case 2:} $B_0(z)$ is not constant, but $A(z)$ contains no powers of $z$. If we assume that for all $n \geq 0$, we have $A(z)p_{u_n}(z^d) + C(z)q_{u_n}(z^d)$ containing at least as many powers of $z$ as $B_0(z)q_{u_n, 0}(z^d)$, the gcd will extract all powers of $z$ from $B_0(z)q_{u_n, 0}(z^d)$ for all $n \geq 0$ and $q_{u_n}(z)$ will contain no powers of $z$ for all $n \geq 1$. This means $G_{n, 0}(z) = B_0(z)$ for all $n \geq 1$, hence $r_{g, n, 0} = \deg(B_0(z))$ for all $n \geq 1$, and the sequence is therefore eventually periodic.\\
\\
Now consider the situation where for some $k \in \N$ we have $A(z)p_{u_k}(z^d) + C(z)q_{u_k}(z^d)$ containing fewer powers of $z$ than $B_0(z)q_{u_k, 0}(z^d)$. This implies $p_{u_{k+1}}(z)$ will not contain any powers of $z$, but $q_{u_{k+1}}(z)$ will. As $A(z)$ does not contain any powers of $z$, it follows $A(z)p_{u_{k+1}}(z^d) + C(z)q_{u_{k+1}}(z^d)$ does not contain any powers of $z$, however $B_0(z)q_{u_{k+1}, 0}(z^d)$ will, hence we return to the same situation but for $k+1$. It follows that this situation will repeat, and as $A(z)p_{u_{n}}(z^d) + C(z)q_{u_n}(z^d)$ will not contain any powers of $z$ for $n \geq k+1$, $G_{n, 0}(z) = 1$ for $n \geq k+1$, hence $r_{g, n, 0}$ will eventually be the constant zero sequence in this case.\\
\\
\underline{Case 3:} $B_0(z)$ is not constant and $A(z)$ contains at least one power of $z$. While this is the most challenging case for the powers of $z$ if approached directly, it is possible to entirely avoid this case through the manipulation of the original Mahler functional equation (\ref{Mahler}). Given $f(z)$ satisfies (\ref{Mahler}), we also have $g(z) := f(z)z^K$  being a Mahler function for any $K \in \Z$ as

\begin{align}
f(z) = \frac{A(z)f(z^d) + C(z)}{B(z)} \implies g(z) = \frac{A(z)g(z^d) + C(z)z^{dK}}{B(z)z^{(d-1)K}} \label{transform}
\end{align}
If $A(z)$ contains some powers of $z$, then all three of $A(z)$, $B'(z) := B(z)z^{(d-1)K}$ and $C'(z) := C(z)z^{dK}$ contain powers of $z$, hence we can naturally simplify the above functional equation. If $K$ is large enough, the powers of $z$ in $A(z)$ are cancelled entirely, resulting in $g(z)$ being a Mahler function where the associated $A$ polynomial has no powers of $z$. In this situation we can resort to the previously outlined case 2.\\
\\
If we successfully show that the entire $r_{g, n}$ sequence for $g(z)$ is periodic, we will successfully show $g(b) = f(b)b^K$ has a rational irrationality exponent. Note however, that $b^K$ is an integer and irrationality exponents are unaffected by scaling of a non-zero rational number, hence $f(b)$ has the same irrationality exponent as $g(b)$, which is rational. It follows that it's enough to consider the Mahler functions $f(z)$ for which $A(z)$ does not contain any powers of $z$, hence we do not need to consider Case 3, which resolves the powers of $z$.

\section*{Non-Cyclotomic terms}

As we will see, this term becomes much more challenging than that of the powers of $z$, and we will not be able to resolve it entirely. Instead, we aim to give conditions for which the $r_{g, n, m}$ sequence is eventually periodic. Although conditions that are easy to check for users are very difficult to come by, we will provide conditions that handle many cases nonetheless.\\
\\
We first search for when $A(z)p_{u_n}(z^d) + C(z)q_{u_n}(z^d)$ is eventually coprime with $B_m(z)$, the cyclotomic factors of $B(z)$. If we can show this is true, then by the Euclidean algorithm that for sufficiently large $n$,

\begin{align*}
&\gcd(A(z)p_{u_n}(z^d) + C(z)q_{u_n}(z^d), B_m(z)q_{u_n, m}(z^d))\\
&= \gcd(A(z)p_{u_n}(z^d) + C(z)q_{u_n}(z^d), q_{u_n, m}(z^d))\\
&= \gcd(A(z), q_{u_n, m}(z^d)).
\end{align*}
This expression is far more simple to deal with, and in fact is always periodic, which we will show soon.

\begin{Lemma}
\label{lemma4.2}
Let $f(z)$ be a Mahler function satisfying (\ref{Mahler}) and let $K = \deg(f(z))$ and $f_0$ be the leading coefficient of $f(z)$. Assume
\begin{enumerate}
    \item $B_m(z)$ is coprime to $A(z^{d^t})$ for all $t \geq 0$,
    \item Every irreducible factor $B'(z)$ of $B_m(z)$ has a root $z_0$ such that $|z_0| > 1$,
    \item $\deg(C(z)) > dK + \deg(A(z))$.
    \item Let $u$ be such that $B(z_0^{d^{t+1}})C(z_0^{d^t}) \neq 0$ for $t \geq u$, then\begin{align*}
    \sum_{k = 0}^\infty \prod_{t = u}^{k-1} \frac{C(z_0^{d^{t+1}})A(z_0^{d^t})}{C(z_0^{d^t})B(z_0^{d^{t+1}})} \neq  0.
    \end{align*}
\end{enumerate}
Under these conditions we find that $\gcd(A(z)p_{u_n}(z^d) + C(z)q_{u_n}(z^d), B_m(z)) = 1$ for all sufficiently large $n$.
\end{Lemma}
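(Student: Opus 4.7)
The plan is to reduce the gcd statement to a pointwise non-vanishing at a root $z_0$ of an irreducible factor of $B_m(z)$, and then to rule out that vanishing by iterating the Mahler functional equation and invoking conditions (3) and (4).

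First, since $B_m(z)$ factors into distinct irreducibles in $\Q[z]$, it suffices to prove that for each irreducible factor $B'(z)$ of $B_m(z)$, $B'(z)$ does not divide $A(z)p_{u_n}(z^d) + C(z)q_{u_n}(z^d)$ for all sufficiently large $n$. By condition~(2) pick a root $z_0$ of $B'$ with $|z_0| > 1$; since $B'$ is the minimal polynomial of $z_0$ over $\Q$, divisibility by $B'$ is equivalent to the vanishing of $p_{u_n,1}(z_0) := A(z_0)p_{u_n}(z_0^d) + C(z_0)q_{u_n}(z_0^d)$. Because $B'\mid B$ we have $B(z_0) = 0$, so the functional equation at $z_0$ gives $A(z_0)f(z_0^d) + C(z_0) = 0$; combined with $A(z_0) \neq 0$ from condition~(1), this yields $f(z_0^d) = -C(z_0)/A(z_0)$, and a short rewrite produces $p_{u_n,1}(z_0) = -A(z_0)\bigl(q_{u_n}(z_0^d)f(z_0^d) - p_{u_n}(z_0^d)\bigr)$. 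By equation~(\ref{cfexp}) with $z \mapsto z^d$, the bracketed quantity equals the convergent-error series $E_n(z_0) := \sum_{i \geq d_{u_n+1}} c_{u_n,i} z_0^{-di}$, so the target reduces to showing $E_n(z_0) \neq 0$ for all large $n$.

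To rule out cancellation in $E_n(z_0)$, I would iterate the functional equation along the orbit $z_0, z_0^d, z_0^{d^2}, \ldots$. With $u$ chosen as in condition~(4) so that $B(z_0^{d^{t+1}})$ and $C(z_0^{d^t})$ are nonzero for $t \geq u$, iterating the equation $m$ times starting from $z_0^{d^{u+1}}$ expresses $f(z_0^{d^{u+1}})$ as the sum of a boundary term of the form $\bigl(\prod_t A(z_0^{d^t})/B(z_0^{d^t})\bigr) f(z_0^{d^{u+1+m}})$ and a finite partial sum in $A, B, C$. Condition~(3), $\deg C > dK + \deg A$, is precisely the degree inequality required -- via the growth/decay estimates of Lemma~\ref{lemma1.5} transplanted from the real point $b$ to the complex point $z_0$ -- to force the boundary term to vanish as $m \to \infty$, yielding a convergent infinite-series representation of $f(z_0^{d^{u+1}})$. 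Stepping this representation back to $z_0^d$ through the functional equation and substituting into $E_n(z_0)$, the leading asymptotic of $E_n(z_0)$ as $n \to \infty$ reduces, after telescoping the $C$ factors, to a nonzero rational multiple of exactly the series appearing in condition~(4). The non-vanishing hypothesis~(4) therefore forbids cancellation and yields $E_n(z_0) \neq 0$ for all large $n$, completing the argument.

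The main obstacle lies in the third step: the bookkeeping required to match the leading behaviour of the $n$-dependent series $E_n(z_0)$ with the $n$-independent series from condition~(4), and to correctly handle the initial orbit segment $z_0, \ldots, z_0^{d^{u-1}}$ on which $B$ or $C$ may vanish -- the latter being precisely what the parameter $u$ in condition~(4) is designed to manage. The role of each hypothesis is transparent: (1) legalises division by $A(z_0)$; (2) secures $|z_0|>1$ so iterated products have predictable asymptotics; (3) is the sharp threshold for convergence of the iterated Mahler series; and (4) is the non-degeneracy that prevents accidental cancellation in the leading term.
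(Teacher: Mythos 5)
Your opening reduction (pass to an irreducible factor $B'$ of $B_m$, pick its root $z_0$ with $|z_0|>1$, and show $A(z_0)p_{u_n}(z_0^d)+C(z_0)q_{u_n}(z_0^d)\neq 0$ for large $n$) and your reading of what each hypothesis is for do match the paper. But your reformulation through $f(z_0^d)=-C(z_0)/A(z_0)$ and the error series $E_n(z_0)=q_{u_n}(z_0^d)f(z_0^d)-p_{u_n}(z_0^d)$ introduces a problem the paper deliberately avoids: condition (2) only gives $|z_0|>1$, so $z_0$ and $z_0^d$ need not lie in the disc of convergence of $f$, and $f(z_0)$, $f(z_0^d)$ and $E_n(z_0)$ may simply not be defined. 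The paper stays entirely with polynomial identities near $z_0$ and only ever evaluates $f$ (through the fixed convergent $p_{u_0}/q_{u_0}$) at the points $z_0^{d^{n+1}}\to\infty$, where convergence is eventually automatic.

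The more serious gap is in your third step, which is missing the engine of the proof: the recursion $G_{n-1}(z)p_{u_n}(z)=A(z)p_{u_{n-1}}(z^d)+C(z)q_{u_{n-1}}(z^d)$ and $G_{n-1}(z)q_{u_n}(z)=B(z)q_{u_{n-1}}(z^d)$, unwound from index $n$ all the way down to index $0$. Under the hypothesized vanishing this yields the exact identity
\begin{align*}
0=p_{u_0}(z_0^{d^{n+1}})\prod_{t=0}^{n}A(z_0^{d^t})+q_{u_0}(z_0^{d^{n+1}})\sum_{k=0}^{n}\Big(\prod_{t=0}^{k-1}A(z_0^{d^t})\Big)C(z_0^{d^k})\Big(\prod_{t=k+1}^{n}B(z_0^{d^t})\Big),
\end{align*}
and after normalising by $q_{u_0}(z_0^{d^{n+1}})z_0^{d^{n+1}K}\prod_t A(z_0^{d^t})$ one side tends to $f_0$ while the other factors as $A_n\sum_{k\leq n}R_k$, where $A_n\to\infty$ (this is where condition (3) enters) and $\sum_k R_k$ converges to the nonzero limit of condition (4); a bounded quantity cannot equal a divergent one for infinitely many $n$. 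Your step three instead iterates the functional equation \emph{forwards}, expressing $f(z_0^{d^{u+1}})$ as a convergent series in $A,B,C$ -- a true identity containing no contradiction -- and "substituting it into $E_n(z_0)$" does not control $E_n(z_0)$: the difference $q_{u_n}(z_0^d)f(z_0^d)-p_{u_n}(z_0^d)$ involves massive cancellation that can only be tracked through the convergent recursion, which your sketch never invokes. Relatedly, saying the leading asymptotic of $E_n(z_0)$ is "a nonzero rational multiple" of the condition-(4) series conflates the two factors: what is needed is that the bounded factor has a nonzero limit (condition (4)) while its prefactor diverges (condition (3)); a nonzero multiple of a nonzero convergent series would not by itself force $E_n(z_0)\neq 0$ for large $n$.
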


While this final assumption may seem complex at first glance, it is not too difficult to check in some cases. For example, if the terms of the product are all positive for $t \geq u+1$, then it is trivially satisfied as the sum is either monotone increasing or decreasing away from zero. Additionally, as we will soon show, the ratio between the terms of the series converge to zero, hence it is possible to gain an estimate of the true value of the sum if sufficiently many terms are computed. This means it is feasible in many cases to see this condition is verified without needing compute the exact value of this infinite sum (although a computer may be necessary).

\begin{proof}
To show this, let $B'(z)$ be an irreducible non-cyclotomic factor of $B_m(z)$. To perform a proof by contradiction, suppose that for an infinite number of $n \in \N$, we have $B'(z)$ being a factor of $A(z)p_{u_n}(z^d) + C(z)q_{u_n}(z^d)$.\\
\\
Let $z_0$ be the root of $B'(z)$ from our second assumption. Then, for an infinite number of $n \in \N$, we have $A(z_0)p_{u_n}(z_0^d) + C(z_0)q_{u_n}(z_0^d) = 0$. By definition of the $G_n(z)$ polynomials, $G_{n-1}(z)p_{u_n}(z) = A(z)p_{u_{n-1}}(z^d) + C(z)q_{u_{n-1}}(z^d)$ and $G_{n-1}(z)q_{u_n}(z) = B(z)q_{u_{n-1}}(z^d)$, hence

\begin{align*}
G_{n-1}(z_0^d)(A(z_0)p_{u_n}(z_0^d) + C(z_0)q_{u_n}(z_0^d)) &= 0\\
\implies A(z_0)A(z_0^d)p_{u_{n-1}}(z_0^{d^2}) + \Big(A(z_0)C(z_0^d) + C(z_0)B(z_0^d)\Big)q_{u_{n-1}}(z_0^{d^2}) &= 0.\\
\end{align*}
We can repeat this procedure for, replacing $p_{u_{k}}(z)$ with $A(z)p_{u_{k-1}}(z^d) + C(z)q_{u_{k-1}}(z^d)$ and $q_{u_{k}}(z^d)$ with $B(z)q_{u_{k-1}}(z^d)$ for all $k \leq n$. This recursive relationship is exactly the same as that of (\ref{qkmForm}) and (\ref{pkmForm}) from several chapters past, so by the same working, we find

\begin{align*}
0 &=p_{u_0}(z_0^{d^{n+1}})\prod_{t=0}^n A(z_0^{d^t}) + q_{u_0}(z_0^{d^{n+1}})\sum_{k = 0}^{n} \Big(\prod_{t = 0}^{k-1} A(z_0^{d^t})\Big) C(z_0^{d^k}) \Big(\prod_{t = k+1}^{n}B(z_0^{d^t})\Big).\\
\end{align*}
By our second assumption, the product $\prod_{t=0}^n A(z_0^{d^t})$ is non-zero for all $n \in \N$. Additionally, $q_{u_0}(z)$ is a polynomial with a fixed finite number of zeros, so eventually $q_{u_0}(z_0^{d^{n+1}}) \neq 0$ for large enough $n$. This means for $n$ large enough,

\begin{align*}
\frac{p_{u_0}(z_0^{d^{n+1}})}{q_{u_0}(z_0^{d^{n+1}})} &= -\frac{\sum_{k = 0}^{n} \Big(\prod_{t = 0}^{k-1} A(z_0^{d^t})\Big) C(z_0^{d^k}) \Big(\prod_{t = k+1}^{n}B(z_0^{d^t})\Big)}{\prod_{t=0}^n A(z_0^{d^t})}\\
&= -\sum_{k = 0}^n \frac{C(z_0^{d^k})}{A(z_0^{d^k})} \prod_{t = k+1}^n \frac{B(z_0^{d^t})}{A(z_0^{d^t})}\\
\frac{p_{u_0}(z_0^{d^{n+1}})}{q_{u_0}(z_0^{d^{n+1}})z_0^{d^{n+1}K}} &= -\sum_{k = 0}^n \frac{C(z_0^{d^k})}{A(z_0^{d^k})z_0^{d^{n+1}K}} \prod_{t = k+1}^n \frac{B(z_0^{d^t})}{A(z_0^{d^t})}.
\end{align*}
As $p_{u_0}(z)/q_{u_0}(z)$ is a convergent of $f \in \Q((z^{-1}))$ and $\deg(f(z)) = K$,  Lemma \ref{lemma1} tells us $f(z) - p_{u_0}(z)/q_{u_0}(z)$ has a degree strictly smaller than $K$, hence as $z$ tends to $\infty$, $(f(z) - p_{u_0}(z)/q_{u_0}(z))z^{-K}$ converges to zero. Since $z_0^{d^{n+1}}$, tends to $\infty$, it follows

\begin{align*}
\lim_{n \to \infty} \frac{p_{u_0}
(z^{d^{n+1}})}{q_{u_0}(z^{d^{n+1}})z_0^{d^{n+1}K}} &= \lim_{n \to \infty} \frac{f(z_0^{d^{n+1}})}{z_0^{d^{n+1}K}} = f_0,
\end{align*}
where $f_0$ is the leading non-zero coefficient of $f(z)$. Now for the right hand side, we claim that this is not the case, and in fact given our assumptions, the above series diverges to $\infty$ as $n \to \infty$. If this is the case, then we get a clear contradiction as $n \to \infty$, hence we should only have equality for a finite number of $n$, thus $B'(z)$ is eventually coprime to $A(z)p_{u_n}(z^d) + C(z)q_{u_n}(z^d)$. As $B_m(z)$ has a finite number of irreducible factors, we eventually have $A(z)p_{u_n}(z^d) + C(z)q_{u_n}(z^d)$ coprime to $B_m(z)$, as required. Note that divergence to $\infty$ is essential as opposed to simply not approaching a limit, as the expressions may be equal for infinitely many $n$ if we don't have divergence to $\infty$. Now we show that the series does diverge to $\infty$ by contradiction, so assume it remains bounded as $n \to \infty$.\\
\\
Without loss of generality, assume that each term of the series is non-zero, as we may simply discard the initial few terms if this is the case and by the following logic, we will still see the series diverge. In terms of assumption (d), this means $u = 0$. As $B(z)$ and $C(z)$ only contain finitely many roots and $z_0^{d^k}$ tends to $\infty$ as $k \to \infty$, we will be discarding only finitely many terms. From the original Mahler equation (\ref{Mahler}) and the assumption that $\deg(A(z)) + dK < \deg(C(z))$, we find
\begin{align*}
\deg(B(z)f(z)) = \deg(A(z)f(z^d) + C(z)) = \deg(C(z)),
\end{align*}
hence $\deg(B(z)) + K = \deg(C(z))$. Using this, we can compare the ratio between consecutive terms of the sum for a fixed $n$. As for sufficiently large $k$ the terms of the sum will be non-zero,

\begin{align*}
&\Bigg| \frac{C(z_0^{d^{k+1}})}{A(z_0^{d^{k+1}})z_0^{d^{n+1}K}} \prod_{t = k+2}^n \frac{B(z_0^{d^t})}{A(z_0^{d^t})}\Bigg|\Big/ \Bigg|\frac{C(z_0^{d^k})}{A(z_0^{d^k})z_0^{d^{n+1}K}} \prod_{t = k+1}^n \frac{B(z_0^{d^t})}{A(z_0^{d^t})}\Bigg|\\
&= \Bigg|\frac{C(z_0^{d^{k+1}})A(z_0^{d^k})}{C(z_0^{d^k})B(z_0^{d^{k+1}})}\Bigg| =  \Bigg|\frac{C(z_0^{d^{k}d})A(z_0^{d^k})}{C(z_0^{d^k})B(z_0^{d^{k}d})}\Bigg|.
\end{align*}
Interpreting these polynomials as polynomials of $z_0^{d^k}$, the degree of the numerator minus the degree of the denominator is

\begin{align*}
(dr_c + r_a) - (r_c + dr_b) &= d(r_c-r_b) + r_a-r_c < dK - dK = 0.
\end{align*}
It follows that as $k \to \infty$, we see $z_0^{d^k} \to \infty$ and the asymptotic growth of the denominator is larger than that of the numerator, hence the ratio converges to zero. Furthermore, the ratio between terms is entirely independent of $n$. This implies we may write the series in the form,

\begin{align*}
\sum_{k = 0}^n A_n R_k &= A_n \sum_{k=0}^n R_k,
\end{align*}
where $A_n$ denotes the first non-zero term of the series, and $R_k$ is the the product of the ratio between the terms of the original series, and independent of $n$.

\begin{align*}
A_n = \frac{C(z_0)}{A(z_0)z_0^{d^{n+1}K}} \prod_{t = 1}^n \frac{B(z_0^{d^t})}{A(z_0^{d^t})} \quad R_k = \prod_{t = 0}^{k-1} \frac{C(z_0^{d^{t+1}})A(z_0^{d^t})}{C(z_0^{d^t})B(z_0^{d^{t+1}})}.
\end{align*}
Now observe that $A_n$ diverges to $\infty$. Indeed, first notice that 

\begin{align*}
d^{n+1} &= d + (d-1)(d + d^2 + ... + d^n)\\
\implies z_0^{d^{n+1}K} &= z_0^{dK} \prod_{t = 1}^n z_0^{(d-1)d^t K}.
\end{align*}
It follows we can rewrite $A_n$,
\begin{align*}
A_n = \frac{C(z_0)}{A(z_0)z_0^{dK}} \prod_{t = 1}^n \frac{B(z_0^{d^t})}{A(z_0^{d^t})z^{(d-1)d^tK}}.
\end{align*}
Now investigating the product, we find each term diverges as $t$ increases. This is clear if we investigate each term as a polynomial of $z_0^{d^t}$, as then the degree of each term is $r_b - r_a - (d-1)K = r_c-K-r_a-dK+K > 0$. It follows the terms of the infinite product diverge, and hence so must the expression for $A_n$. If $A_n\sum_{k=0}^n R_k$ is to be bounded over $n$, we must have the sum over $R_k$ equal to zero as $n \to \infty$. That is,
\begin{align*}
\sum_{k = 0}^\infty \prod_{t = 0}^{k-1} \frac{C(z_0^{d^{t+1}})A(z_0^{d^t})}{C(z_0^{d^t})B(z_0^{d^{t+1}})} =  0.
\end{align*}
This however is a direct contradiction to our initial assumption, completing the proof. It should be noted that this series does indeed converge, as by applying the ratio test, we simply get the ratio between terms is the same as the previous ratio found which converges to zero as we have already shown, thus the series converges absolutely.
\end{proof}

Although some of the assumptions of this Lemma are strong, it is certainly possible to loosen the assumptions of the Lemma. For example, suppose all the assumptions of Lemma \ref{lemma4.2} hold with the exception of (c) and (d), where we instead have $\deg(C(z)) < dK + \deg(A(z))$. Then by performing the same proof by contradiction, we reach the same series result,
\begin{align*}
\frac{p_{u_0}(z_0^{d^{n+1}})}{q_{u_0}(z_0^{d^{n+1}})z_0^{d^{n+1}K}} &= -\sum_{k = 0}^n \frac{C(z_0^{d^k})}{A(z_0^{d^k})z_0^{d^{n+1}K}} \prod_{t = k+1}^n \frac{B(z_0^{d^t})}{A(z_0^{d^t})},
\end{align*}
where, as $n \to \infty$, the left hand converges to the same $f_0$, the leading coefficient of $f(z)$. We claim the series also converges as $n \to \infty$, however there is no guarantee that it converges to $f_0$. For now, assume the series does in fact converge as $n \to \infty$, and note that its limit depends entirely on $A(z), B(z)$ $C(z)$ and $d$. If we were to expect equality always, then this suggests that $f_0$ can be expressed entirely in terms of $A(z), B(z), C(z)$ and $d$, however this is false.\\
\\
If we consider the Mahler equation (\ref{Mahler}) and try to solve for $f_0$. Then the leading terms suggest $\alpha f_0 = \beta f_0$, where $\alpha$ and $\beta$ are the leading terms of $A(z)$ and $B(z)$ respectively. As $f_0 \neq 0$, we have $\alpha = \beta$ and this equation provides no information about the value of $f_0$. If we consider the next coefficient, we find $f_1$, the next coefficient of $f(z)$, as a function of $f_0$ and the other terms of $A(z), B(z)$ and $C(z)$. This continues for each coefficient, leading to all coefficients depending on $f_0$ which may be chosen arbitrarily. It follows that as $n \to \infty$, we have a contradiction if $f_0$ is such that

\begin{align*}
f_0 \neq -\lim_{n \to \infty} \sum_{k = 0}^n \frac{C(z_0^{d^k})}{A(z_0^{d^k})z_0^{d^{n+1}K}} \prod_{t = k+1}^n \frac{B(z_0^{d^t})}{A(z_0^{d^t})}.
\end{align*}
Sadly this condition is not as simple to check as the ones of the Lemma \ref{lemma4.2}, however this applies to all except one Mahler function for a given $A(z), B(z), C(z)$ and $d$ and thus is not too restrictive if considering all Mahler functions. Now showing that the series does indeed converge, for simplicity assume $B(z_0^{d^t}) \neq 0$ for $t \geq 1$, as if this is not the case, then as $B$ is a polynomial, this is true for all $t$ large enough, hence some finite number of the leading terms of the sum are zero and we analyse the rest. With this we can factor out the product terms giving,

\begin{align*}
&\sum_{k = 0}^n \frac{C(z_0^{d^k})}{A(z_0^{d^k})z_0^{d^{n+1}K}} \prod_{t = k+1}^n \frac{B(z_0^{d^t})}{A(z_0^{d^t})} = \sum_{k = 0}^n \frac{C(z_0^{d^k})}{A(z_0^{d^k})z_0^{d^{k+1}K}} \prod_{t = k+1}^n \frac{B(z_0^{d^t})}{A(z_0^{d^t})z_0^{(d-1)d^{t}K}}\\
&= \Big(\prod_{t = 1}^n \frac{B(z_0^{d^t})}{A(z_0^{d^t})z_0^{(d-1)d^{t}K}}\Big)\sum_{k = 0}^n \frac{C(z_0^{d^k})}{B(z_0^{d^k})z_0^{d^{k}K}} \prod_{t = 1}^{k-1}\frac{A(z_0^{d^t})z_0^{(d-1)d^{t}K}}{B(z_0^{d^t})}.
\end{align*}
We now need to check the convergence of the infinite product and the series (with terms that no longer depend on $n$). For the series, we use a ratio test. The ratio between the $k+1$ and the $k$-th term is
\begin{align*}
&\frac{C(z_0^{d^{k+1}})A(z_0^{d^k})z_0^{(d-1)d^kK}}{B(z_0^{d^{k+1}})z_0^{d^{k+1}K}B(z_0^{d^k})}\times \frac{B(z_0^{d^k})z_0^{d^kK}}{C(z_0^{d^k})}\\
&= \frac{C(z_0^{d^{k+1}})A(z_0^{d^k})}{B(z_0^{d^{k+1}})C(z_0^{d^{k}})}.
\end{align*}
Interpreting these as polynomials of $z_0^{d^k}$ as usual, we have the degree of the numerator minus the degree of the denominator being $dr_c + r_a - dr_b - r_c$. By comparing degrees in (\ref{Mahler}), we find $r_b + K = r_a + dK > r_c$, hence $dr_c + r_a - dr_b - r_c < (d-1)(r_a + dK) + r_a - d(r_a + (d-1)K) = 0$. This implies the numerator grows slower asymptotically than the denominator, and the ratio converges to zero as $k \to \infty$, hence the series converges absolutely. We also claim the infinite product converges absolutely by checking the sum

\begin{align*}
\sum_{t = 1}^ \infty \Big| \frac{B(z_0^{d^t})}{A(z_0^{d^t})z_0^{(d-1)d^tK}} - 1\Big|  = \sum_{t = 1}^ \infty \Big| \frac{B(z_0^{d^t}) - A(z_0^{d^t})z_0^{(d-1)d^tK}}{A(z_0^{d^t})z_0^{(d-1)d^tK}}\Big| < \infty.
\end{align*}
Note that we have already seen by comparing the leading coefficients for the original Mahler equation (\ref{Mahler}), $\alpha = \beta$. As $r_b + K = r_a +dK$, the leading terms in the sum $B(z) - A(z)z^{(d-1)K}$ cancel each other. We can write $P(z) = B(z) - A(z)z^{(d-1)K}$ and $Q(z) = A(z)z^{(d-1)K}$ and note $\deg(P(z)) < \deg(Q(z))$. Applying the ratio test to $\sum_{t = 0}^\infty |\frac{P(z_0^{d^t})}{Q(z_0^{d^t})}|$, we see the ratio between terms as
\begin{align*}
\frac{P(z_0^{d^{t+1}})Q(z_0^{d^t})}{Q(z_0^{d^{t+1}})P(z_0^{d^t})}.
\end{align*}
As a polynomial of $z_0^{d^t}$, the degree of the numerator is larger than that of the denominator, hence this ratio converges to zero as $t \to \infty$, thus the series converges.\\
\\

We have successfully reduced $G_{n, m}(z)$ down to $\gcd(A(z), q_{u_n, m}(z^d))$ for all $n$ large enough. We claim the degree of this is periodic in all cases, which we can prove using an argument similar to that of Badziahin \cite{badziahin}. We make note of a Lemma first.

\begin{Lemma}
\label{lemma4.1}
Let $A(z), D(z) \in \Q[z]$ be polynomials such that any root $z_0$ of $D(z)$ is such that $|z_0| \in (0, \infty)\setminus \{1\}$. Then there exists some $n_0 \in \N$ such that for all $n \geq n_0$, $\gcd(A(z), D(z^{d^n})) = 1$.
\end{Lemma}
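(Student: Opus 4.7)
The plan is to pass from the algebraic condition $\gcd(A(z), D(z^{d^n})) = 1$ in $\Q[z]$ to the equivalent geometric condition that $A(z)$ and $D(z^{d^n})$ share no common root in $\C$. A complex number $\alpha$ is a common root precisely when $A(\alpha)=0$ and $\alpha^{d^n}$ lies in the set $R_D$ of complex roots of $D(z)$. Since $A$ has finitely many roots $R_A \subseteq \C$, it suffices to show that for each $\alpha \in R_A$ there exists $n_0(\alpha) \in \N$ such that $\alpha^{d^n} \notin R_D$ for all $n \ge n_0(\alpha)$, after which we simply set $n_0 = \max_{\alpha \in R_A} n_0(\alpha)$.

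To handle each $\alpha$, I would exploit the hypothesis in purely metric terms. The set $M := \{|z_0| : z_0 \in R_D\}$ is a finite subset of $(0,\infty)\setminus\{1\}$, hence $M$ is bounded above by some $M_{\max}$, bounded below by some $M_{\min} > 0$, and stays a positive distance away from $1$. The argument then splits into four easy cases based on $|\alpha|$:
\begin{itemize}
\item If $\alpha = 0$, then $\alpha^{d^n}=0$, which is not in $R_D$ since every root of $D$ has positive modulus.
\item If $|\alpha| = 1$, then $|\alpha^{d^n}|=1 \notin M$, so $\alpha^{d^n}\notin R_D$ for every $n$.
\item If $|\alpha| > 1$, then $|\alpha|^{d^n} \to \infty$, so for $n$ large enough $|\alpha|^{d^n} > M_{\max}$ and $\alpha^{d^n}\notin R_D$.
\item If $0 < |\alpha| < 1$, then $|\alpha|^{d^n} \to 0$, so for $n$ large enough $|\alpha|^{d^n} < M_{\min}$ and $\alpha^{d^n}\notin R_D$.
\end{itemize}
In each case the required $n_0(\alpha)$ can be written down explicitly in terms of $\log|\alpha|$, $M_{\min}$, $M_{\max}$, and $d$, though for the lemma only existence is needed.

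Honestly, there is no real obstacle: the only subtlety is noticing that the hypothesis $|z_0|\in(0,\infty)\setminus\{1\}$ is doing two separate jobs, ruling out the fixed point $|z|=1$ of the iteration $|z|\mapsto|z|^d$ and ruling out $|z|=0$ (where the iteration would also stall). Together with the finiteness of $R_A$, these exclusions force $|\alpha|^{d^n}$ to eventually escape the compact set $M$ for every $\alpha\in R_A$, and taking a finite maximum over $\alpha$ gives the uniform bound $n_0$ claimed in the statement.
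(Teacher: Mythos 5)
Your proof is correct and is essentially the paper's argument: both reduce to the observation that the moduli $0$ and $1$ are the only fixed points of $t \mapsto t^{d}$ on $[0,\infty)$, that the root-moduli of $D$ avoid them by hypothesis, and that $A$ has only finitely many roots. The only (cosmetic) difference is that you push the roots of $A$ forward under $z \mapsto z^{d^n}$ until they escape the annulus containing the roots of $D$, whereas the paper pulls that annulus back and notes the preimages shrink onto the unit circle.
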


\begin{proof}
If this is not the case, then $A(z)$ having finitely many roots implies at least one root of $A(z)$ is a root of $D(z^{d^n})$ for infinitely many $n$. Let this root be $z_0$. If $z_0 = 0$ or $|z_0| = 1$, the by noticing that for all $n \geq 0$, $z_0^{d^n} = 0$ or $|z_0^{d^n}| = 1$, so $z_0$ cannot be a root of $D(z^{d^n})$, a contradiction.\\
\\
Now suppose $|z_0| \in (0, \infty) \setminus \{1\}$. As $D(z)$ has finitely many roots, there exists a $\alpha \in (0, 1)$ and $\beta \in (1, \infty)$ such that $D(z) = 0 \implies \alpha < |z| < \beta$. Replacing $z$ with $z^{d^n}$, we have $D(z^{d^n}) = 0 \implies \alpha^{1/d^n} < |z| < \beta^{1/d^n}$. Simply take $n_0$ large enough such that $z_0 \notin (\alpha^{1/d^{n_0}}, \beta^{1/d^{n_0}})$, implying $z_0$ cannot be a root of $D(z^{d^n})$ for $n \geq n_0$, a contradiction.
\end{proof}

As we eventually have $G_{n, m}(z) = \gcd(A(z), q_{u_n, m}(z^d))$ and we have not imposed any conditions on the convergents of $f$, we may simply start studying the primitive gap sequence at the point we have equality. It follows that it's enough for us to prove $r_{g, n, m}$ is eventually periodic if this equality is expected for all $n \geq 0$ by simply adjusting where we start looking at the primitive gap sequence. By definition, we have $q_{u_n, m}(z)$ dividing $B_m(z)q_{u_{n-1}, m}(z^d)$ for all $n \geq 1$, thus it follows $q_{u_n}(z)$ is some factor of  $q_{u_0, m}(z^{d^n})\prod_{k=0}^{n-1} B(z^{d^k})$. Accounting for the removed factors, we can write $q_{u_n, m}(z^d)$ in the form

\begin{align*}
q_{u_n, m}(z^d) = q'_{n}(z) \prod_{k=1}^{n}B'_{k, n}(z).
\end{align*}
where $B'_{k, n}(z)$ divides $B_m(z^{d^k})$. And $q'_{n}(z)$ divides $q_{u_n, m}(z^{d^{n+1}})$. By Lemma \ref{lemma4.1}, we note that by setting $D(z) = B_m(z)$ and $D(z) = q_{u_0, m}(z)$, we satisfy all the required assumptions, hence they will not contribute to $G_{n, m}(z)$ for sufficiently large $n$. This means for large $n$, there exists some $k_0$ independent of $n$ such that

\begin{align*}
\gcd(A(z), q_{u_n, m}(z^d)) &= \gcd(A(z), \prod_{k = 1}^{k_0} B'_{k, n}(z)).
\end{align*}
We now apply a combinatorial argument. For sufficiently large $n$, As each $B_m(z^{d^k})$ has a finite number of factors for $1 \leq k \leq k_0$, there exists some $n_1 < n_2$ such that $B'_{n_1, k}(z) = B'_{n_2, k}(z)$ for all $1 \leq k \leq k_0$ and thus $G_{n_1, m}(z) = G_{n_2, m}(z)$. As $q_{u_{n_1+1}, m}(z) = B_m(z)q_{u_{n_1}, m}(z^d)/G_{n_1, m}(z)$ and similarly for $n_2$, it follows that $B'_{n_1+1, k}(z)$ depends entirely on the previous $B'_{n_1, j}(z) = B'_{n_2, j}(z)$ for $1 \leq j \leq k_0$. This means $B'_{n_1+1, k}(z) = B'_{n_2+1, k}(z)$ for all $1 \leq k \leq k_0$. It follows that the $B'_{n, k}(z)$ are periodic, and hence so must $G_{n, m}(z)$ and $r_{g, n, m}$ for sufficiently large $n$.\\
\\

\section*{Cyclotomic terms}

This is most challenging part of showing the $r_{g, n}$ sequence is periodic. In the case $C(z) = 0$, a significant majority of the proof Badziahin gives to show $r_{g, n}$ is periodic is in reference to this case, with the previous two cases being relatively simple in comparison. This is due to the fact that there are many distinct cyclotomic polynomials to keep track of and the fact that we cannot resort to limiting techniques as the roots do not tend to $\infty$. To resolve this, we reduce to the following case:

\begin{Lemma}
\label{lemma5.4}
Suppose that there exists some $n_0 \in \N$ such that $A(z)p_{u_n}(z^d) + C(z)q_{u_n}(z^d)$ is coprime to $B_c(z)$ for $n \geq n_0$. Then the $r_{g, n, c}$ sequence is eventually periodic.
\end{Lemma}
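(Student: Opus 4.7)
The plan is to use the coprimality hypothesis to collapse $G_{n,c}(z)$ to a simpler gcd, and then analyse the dynamics of cyclotomic multiplicities in $q_{u_n,c}(z)$ by a finite-state argument on a contracting self-map of the cyclotomic indices.

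First I would apply the same Euclidean-algorithm reduction as in the non-cyclotomic case: any common divisor of $B_c(z)q_{u_n,c}(z^d)$ and $A(z)p_{u_n}(z^d)+C(z)q_{u_n}(z^d)$ must be coprime to $B_c(z)$ by hypothesis and, using $\gcd(p_{u_n},q_{u_n})=1$ from Corollary \ref{corollay}, also coprime to $p_{u_n}(z^d)$. This collapses $G_{n,c}(z)$ to $\gcd(A(z),q_{u_n,c}(z^d))$ for $n\ge n_0$. Since $A(z)$ has only finitely many cyclotomic factors, I would index them by a finite set $\mathcal{M}_A$ with multiplicities $\alpha_m$, so that $r_{g,n,c} = \sum_{m\in\mathcal{M}_A}\phi(m)\,g_{m,n}$, where $g_{m,n}$ denotes the $\Phi_m$-multiplicity in $G_{n,c}$.

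Next I would set up the combinatorial dynamics. Let $e_{m,n}$ be the $\Phi_m$-multiplicity in $q_{u_n,c}(z)$ and $b_m$ the corresponding multiplicity in $B_c(z)$. The elementary cyclotomic identity that $\Phi_m \mid \Phi_{m'}(z^d)$ precisely when $m' = \pi(m):=m/\gcd(m,d)$ expresses the $\Phi_m$-multiplicity in $q_{u_n,c}(z^d)$ as $a_m e_{\pi(m),n}$ for a fixed positive integer $a_m$. The quotient identity $q_{u_{n+1},c} = B_c \cdot q_{u_n,c}(z^d)/G_{n,c}$ then yields the recursion
\[ e_{m,n+1} = b_m + a_m e_{\pi(m),n} - g_{m,n}, \qquad g_{m,n} = \min\bigl(\alpha_m,\ a_m e_{\pi(m),n}\bigr), \]
with $g_{m,n}=0$ for $m\notin\mathcal{M}_A$. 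The crucial structural point is that $\pi$ is contracting ($\pi(m)\le m$ with equality iff $\gcd(m,d)=1$), so every forward $\pi$-orbit is finite and ends at a fixed point $m^*$ with $\gcd(m^*,d)=1$. Taking $\mathcal{T}$ to be the union of the forward $\pi$-orbits of all indices that appear with positive multiplicity in $A$, $B_c$, or $q_{u_{n_0},c}$ yields a finite set on which the recursion is closed.

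Finally I would induct outward from the fixed points. A short degree count in $\Phi_{m^*}(z^d)$ gives $a_{m^*}=1$, after which a case split on whether $b_{m^*}\ge\alpha_{m^*}$ shows that $e_{m^*,n}$ either stabilises at $b_{m^*}$ or grows linearly in $n$. Walking up the $\pi$-tree, once $e_{\pi(m),n}$ has either stabilised or tends to $\infty$, the same dichotomy applies to $e_{m,n}$: if the input grows unboundedly then $g_{m,n}$ saturates at $\alpha_m$, and if the input stabilises then so does $g_{m,n}$. Either way each $g_{m,n}$ is eventually constant, hence $r_{g,n,c}$ is eventually constant and a fortiori eventually periodic. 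The main obstacle will be the bookkeeping on the finite directed graph $(\mathcal{T},\pi)$, in particular distinguishing the saturated and unsaturated regimes of the $\min$ at each node and verifying that the stabilisation propagates in the correct order along each orbit; the underlying dynamics on each $\pi$-orbit is essentially one-dimensional and monotone, so once the combinatorial framework is in place the remaining verification is routine.
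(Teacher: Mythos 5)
Your proposal is correct, and after the shared opening reduction (collapsing $G_{n,c}$ to $\gcd(A(z),q_{u_n,c}(z^d))$ via the coprimality hypothesis and Corollary \ref{corollay}, exactly as the paper does) it proceeds by a genuinely different engine. The paper groups cyclotomic indices by the invariant $r(k,d)$, and then runs three separate case analyses: a pigeonhole argument on the finite state vectors $\Sigma_n$ (made legitimate by the boundedness result of Lemma \ref{lemma5.8}) when $\Phi_r$ is absent from $B_{c,r}$, and a qualitative saturation argument via Lemma \ref{lemma5.7} when $\Phi_r$ divides $B_{c,r}$, concluding eventual periodicity with a possibly nontrivial period. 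You instead make the dynamics fully explicit: the identity $\Phi_m(z)\mid\Phi_{m'}(z^d)\iff m'=\pi(m)=m/\gcd(m,d)$ (a correct repackaging of Lemma \ref{lemma5.5}; note $a_m=1$ for \emph{every} $m$, not just fixed points, since $\Phi_{m'}(z^d)$ is squarefree) turns the quotient identity into the autonomous recursion $e_{m,n+1}=b_m+\max(0,e_{\pi(m),n}-\alpha_m)$, and the triangular structure along the finite $\pi$-orbits, together with the monotonicity of the one-dimensional map at each node, yields the stabilise-or-diverge dichotomy and hence that each $g_{m,n}=\min(\alpha_m,e_{\pi(m),n})$ is eventually \emph{constant}. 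This buys you a stronger conclusion (period $1$), dispenses with Lemmas \ref{lemma5.7} and \ref{lemma5.8} entirely, and handles uniformly the configuration ($\Phi_r$ dividing both $A$ and $B_c$) that the paper must dispose of separately as contradicting the hypothesis.

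One point needs to be stated more carefully when you write this up: the support of the full multiplicity vector $e_{\cdot,n}$ is \emph{not} contained in $\mathcal{T}$ and is in fact unbounded (e.g.\ a factor $\Phi_1$ of $q_{u_{n_0},c}$ spawns $\Phi_1\Phi_2\Phi_4\cdots$ under repeated substitution $z\mapsto z^d$ with $d=2$), so "the recursion is closed on $\mathcal{T}$" must be justified by the observation that $e_{m,n+1}$ depends only on $e_{\pi(m),n}$ and that $\pi(\mathcal{T})\subseteq\mathcal{T}$; the indices outside $\mathcal{T}$ never feed back into the finitely many $m$ with $\alpha_m>0$, which is all that $r_{g,n,c}$ sees. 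With that clarification the remaining bookkeeping is, as you say, routine.
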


Sadly, the conditions that induce the assumption of this Lemma are not clear other than trivial ones such as $B_c(z) = 1$ as it requires us to map the roots of $A(z)p_{u_n}(z^d) + C(z)q_{u_n}(z^d)$ with precision about the unit circle, however it is necessary if we wish to solve this problem in a tractable manner while maintaining some level of generality. This assumption is certainly not expected to hold in general, however restricting to this case allows us to apply the Euclidean algorithm as we have done before, reducing to an expression similar to that of the non-cyclotomic case.

\begin{align*}
G_{n, c}(z) &= \gcd(A(z)p_{u_n}(z^d) + C(z)q_{u_n}(z^d), B_c(z)q_{u_n, c}(z^d))\\
&= \gcd(A(z)p_{u_n}(z^d), q_{u_n, c}(z^d))\\
&= \gcd(A_c(z), q_{u_n, c}(z^d)),
\end{align*}
where $A_c(z)$ denotes the cyclotomic factors of $A(z)$. As there is no addition term, we can apply an argument similar to that of Badziahin.\\
\\
One definition of a cyclotomic polynomial is an irreducible monic factor of $z^k - 1$ for some $k \geq 1$. It's possible to show that the cyclotomic polynomials can be indexed as $\Phi_n(z), n \geq 1$ where

\begin{align*}
\Phi_n(z) &= \prod_{\substack{1 \leq k \leq n\\ \gcd(i, n) = 1}} (z - e^{2i\pi \frac{k}{n}}).
\end{align*}
Before we prove Lemma \ref{lemma5.4}, we must introduce some additional Lemmas and notation, given by Badziahin \cite{badziahin}. For any two positive integers $m, n$, let $r(m, n)$ be the largest divisor of $m$ which is coprime with $n$. Also let $s(m, n) = \frac{m}{r(m, n)}$. In terms of the prime factorisation of $m$, $r(m, n)$ contains the prime factors of $m$ that are not prime factors of $n$, and $s(m, n)$ contains the prime factors of $m$ that are factors of $n$.
\begin{Lemma}
\label{lemma5.5}
For any positive integers $n, d$,
\begin{align*}
\Phi_n(z^d) &= \prod_{r | r(d, n)} \Phi_{rns(d, n)}(z).
\end{align*}
\end{Lemma}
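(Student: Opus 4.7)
My plan is to identify the roots of both sides as elements of $\C$ and verify that they coincide with equal multiplicities; since both $\Phi_n(z^d)$ and the product on the right are monic, this suffices to conclude equality.

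First, I would characterise the roots of $\Phi_n(z^d)$: an element $\eta \in \C$ is a root if and only if $\eta^d$ is a primitive $n$-th root of unity. Letting $m$ denote the multiplicative order of $\eta$, the order of $\eta^d$ equals $m/\gcd(m, d)$, so the condition becomes $m = n \gcd(m, d)$ with $\gcd(m, d) \mid d$. Setting $g = \gcd(m, d)$, so that $m = ng$, the identity $\gcd(ng, d) = g\, \gcd(n, d/g)$ reduces the condition to the pair $g \mid d$ and $\gcd(n, d/g) = 1$.

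Next, I would parametrise the admissible $g$. Using the coprime factorisation $d = r(d, n)\, s(d, n)$, every divisor $g$ of $d$ factors uniquely as $g = g_1 g_2$ with $g_1 \mid r(d, n)$ and $g_2 \mid s(d, n)$. Because every prime factor of $s(d, n)/g_2$ divides $n$, the coprimality condition $\gcd(n, d/g) = 1$ forces $s(d, n)/g_2 = 1$, so $g_2 = s(d, n)$, while $g_1$ ranges freely over divisors of $r(d, n)$. Writing $r = g_1$, the admissible orders $m$ are exactly $m = r n s(d, n)$ with $r \mid r(d, n)$. This shows that the set of roots on the left equals the union of primitive $rns(d,n)$-th roots of unity on the right.

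Finally, I would verify simplicity of roots and matching degrees. Simplicity on the left follows from $\tfrac{d}{dz}\Phi_n(z^d) = d z^{d-1} \Phi_n'(z^d)$ being nonzero at $\eta$ (as $\eta \neq 0$ and $\Phi_n$ has only simple roots), and on the right it is automatic since distinct cyclotomic polynomials are coprime irreducibles. For the degree check: as $r \mid r(d,n)$ is coprime to $n s(d,n)$ and every prime factor of $s(d,n)$ divides $n$, one has $\phi(rn s(d, n)) = \phi(r)\, s(d, n)\, \phi(n)$, and summing yields $\sum_{r \mid r(d, n)} \phi(rn s(d, n)) = r(d, n)\, s(d, n)\, \phi(n) = d\, \phi(n) = \deg \Phi_n(z^d)$. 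Two monic polynomials with identical root sets and multiplicities must coincide.

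The main obstacle is the bookkeeping in the second paragraph: carefully using the coprime decomposition $d = r(d, n)\, s(d, n)$ to argue that $g_2$ is forced to equal $s(d, n)$ in full, while $g_1$ is unconstrained. Once this parametrisation is pinned down, the remainder is a routine degree and simplicity check.
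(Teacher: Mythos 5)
Your proof is correct and follows essentially the same route as the paper: both arguments identify the roots of $\Phi_n(z^d)$ as primitive roots of unity whose orders are classified via the coprime factorisation $d = r(d,n)\,s(d,n)$, the only difference being that you solve the order equation $m = n\gcd(m,d)$ directly while the paper writes the roots explicitly as powers of $\xi_{nd}$ and partitions the exponents by their gcd with $d$. If anything, your version is the more complete one, since the paper stops after grouping the roots of the left-hand side and does not explicitly verify the reverse inclusion, the simplicity of roots, or the degree count $\sum_{r \mid r(d,n)} \phi(rns(d,n)) = d\,\phi(n)$, all of which you supply.
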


\begin{proof}[Proof of Lemma \ref{lemma5.5}]
Any root of $\Phi_n(z)$ is of the form $\xi_n^k$ where $\xi_n = e^{2i \pi / n}$ and $\gcd(k, n) = 1$. It follows that any root $\xi$ of $\Phi_n(z^d)$ satisfies $\xi^d = \xi_n^k = \xi_{nd}^{kd + ndt}$ where $0 \leq t \leq d-1$. Thus the roots of $\Phi_n(z^d)$ are
\begin{align*}
\xi = \xi_{nd}^{k + nt} \quad 0 \leq t \leq d-1 \text{ and } 1 \leq k \leq d-1.
\end{align*}
where $\gcd(k, n) = 1$ and hence $\gcd(k+nt, n) = 1$. Note that $nt+k$ must run through all integers between 1 and $nd$ that are coprime to $n$. Denote this set $N$, and split $N$ into disjoint subsets based on each elements common factors with $d$.
\begin{align*}
N_t &= \{x \in N | \gcd(x, d) = t\}.
\end{align*}
The $N_t$ can only be non-empty when $t | d$ and $\gcd(t, n) = 1$ (as otherwise $\gcd(x, n) \neq 1$, a contradiction to the definition of $N$). This is the same as $t | r(d, n)$. Fix a value of $t$ such that $N_t$ is non-empty, and let $r = r(d, n)/t$. For any $x \in N_t$, $x$ is divisible by $t$ hence $\xi_{nd}^x = \xi_{nd/t}^{x/t}$. Also note that $\gcd(x/t, nd/t) = \gcd(x, nd)/t = \gcd(x, d)/t = t/t = 1$, so $\xi_{nd/t}^{x/t}$ is a root of $\Phi_{nd/t}(z) = \Phi_{rns(d, n)}(z)$. Putting this together for all $t$ and reversing the product index for clarity,
\begin{align*}
\Phi_{n}(z^d) = \prod_{t | r(d, n)} \Phi_{rns(d, n)}(z) = \prod_{r | r(d, n)} \Phi_{rns(d, n)}(z).
\end{align*}
\end{proof}

\begin{Lemma}
\label{lemma5.7}
Fix $n \in \N$ and let $r = r(n, d)$. Then there exists a $m \in \N$ such that $\Phi_n(z) | \Phi_r(z^{d^m})$.
\end{Lemma}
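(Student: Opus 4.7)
The plan is to apply Lemma \ref{lemma5.5} directly, with the right substitutions, exploiting the defining property that $r = r(n,d)$ is coprime to $d$. First I would observe that, because $\gcd(r,d) = 1$, we also have $\gcd(r, d^m) = 1$ for every $m \geq 1$. This means every divisor of $d^m$ is coprime to $r$, so $r(d^m, r) = d^m$, and consequently $s(d^m, r) = d^m / r(d^m, r) = 1$.

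Next, I would apply Lemma \ref{lemma5.5} with the role of ``$n$'' played by $r$ and the role of ``$d$'' played by $d^m$. Relabelling the dummy index in the product, this yields
\begin{align*}
\Phi_r(z^{d^m}) = \prod_{t \mid r(d^m, r)} \Phi_{t \cdot r \cdot s(d^m, r)}(z) = \prod_{t \mid d^m} \Phi_{tr}(z).
\end{align*}
Since distinct cyclotomic polynomials are irreducible and pairwise coprime, $\Phi_n(z)$ divides $\Phi_r(z^{d^m})$ if and only if $n = tr$ for some divisor $t$ of $d^m$.

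Finally, I would choose $m$ so that this divisibility is realised. Writing $n = r \cdot s(n,d)$, the problem reduces to showing that $s(n,d) \mid d^m$ for some $m$. By definition, $s(n,d)$ is composed entirely of prime factors of $d$; hence taking $m$ larger than the maximum exponent of any prime appearing in $s(n,d)$ guarantees $s(n,d) \mid d^m$. With this choice of $m$, setting $t = s(n,d)$ gives $n = tr$ with $t \mid d^m$, placing $\Phi_n(z)$ among the factors on the right-hand side and completing the proof.

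There is no serious obstacle: the argument is essentially a bookkeeping exercise on the $r(\cdot,\cdot)$ and $s(\cdot,\cdot)$ operations. The only point requiring care is the interchange of roles in Lemma \ref{lemma5.5}, and in particular the verification that $r(d^m, r) = d^m$ and $s(d^m, r) = 1$, which rests squarely on the coprimality of $r$ with $d$ that is built into the definition of $r(n,d)$.
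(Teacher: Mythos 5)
Your proof is correct, but it takes a genuinely different route from the paper's. The paper argues by induction on $s(n,d)$: after writing out the prime factorisations of $s(n,d)$ and $d$, it produces a smaller $s' < s(n,d)$ such that $\Phi_n(z)$ divides $\Phi_{rs'}(z^{d})$ via a single application of Lemma \ref{lemma5.5} with exponent $d$, and then descends to the base case $s = 1$ by the inductive hypothesis. You instead apply Lemma \ref{lemma5.5} exactly once, with $d^m$ in place of $d$ and $r$ in place of $n$, and exploit the coprimality $\gcd(r,d)=1$ built into the definition of $r = r(n,d)$ to get $r(d^m,r)=d^m$ and $s(d^m,r)=1$, collapsing the product to $\prod_{t\mid d^m}\Phi_{tr}(z)$; choosing $m$ so that $s(n,d)\mid d^m$ (possible because every prime of $s(n,d)$ divides $d$) then exhibits $\Phi_n(z)$ directly as the factor with $t = s(n,d)$. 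Your version buys a shorter argument with no induction and no explicit prime-exponent bookkeeping, at the mild cost of invoking Lemma \ref{lemma5.5} for the composite exponent $d^m$ rather than just $d$ --- harmless, since that lemma is stated for arbitrary positive integers. One small remark: for the conclusion you only need the ``if'' direction of your equivalence (that $\Phi_{tr}(z)$ with $tr=n$ occurs among the factors), so the appeal to pairwise coprimality of distinct cyclotomic polynomials is not actually required.
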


\begin{proof}
Write $n = rs$ where $s = s(n, d)$. We will show the result holds true for any value of $s \geq 1$ for a fixed value of $n$ by induction on $s$. If $s = 1$, the statement is obvious as $n = r$ and we may choose $m = 0$.\\
\\
Let the \textit{radical} of an integer $s$, denoted $\rad(s)$, be the product of each of the prime factors of $s$. As $r(n, d)$ contains all the factors of $n$ that are not factors of $d$, it follows $s(n, d)$ contains all the factors of $n$ that are factors of $d$, and hence $\rad(s) | \rad(d)$. Fix some $S \in \N$ such that $\rad(S) | \rad(d)$ and suppose the statement holds true for all $s < S$. For the inductive step, we must show the statement holds for $s(n, d) = S$. Write the prime factorisations of $S$ and $d$ as
\begin{align*}
S = p_1^{\beta_1}p_2^{\beta_2}...p_k^{\beta_k}p_{k+1}^{\beta_{k+1}}...p_{k+l}^{\beta_{k+l}} \quad d = p_1^{\alpha_1}...p_{k+l}^{\alpha_{k+l}}
\end{align*}
where the $\alpha_i, \beta_i$ are integers such that $0 \leq \beta_i < \alpha_i$ for $i \leq k$ and $0 < \alpha_i \leq \beta_i$ for $k+1 \leq i \leq k+l$. Now fix $s = p_{k+1}^{\beta_{k+1}-\alpha_{k+1}}p_{k+1}^{\beta_{k+2}-\alpha_{k+2}}...p_{k+l}^{\beta_{k+l}-\alpha_{k+l}}$, which is certainly less than $S$. We also find $r(d, s)$ is a multiple of $p_1^{\beta_1}...p_k^{\beta_k}$, thus
\begin{align*}
p_1^{\beta_1}...p_k^{\beta_k} s s(d, s) &= p_1^{\beta_1}...p_k^{\beta_k} \times p_{k+1}^{\beta_{k+1}-\alpha_{k+1}}...p_{k+l}^{\beta_{k+l}-\alpha_{k+l}} \times p_{k+1}^{\alpha_{k+1}}...p_{k+l}^{\alpha_{k+l}}\\
&= p_1^{\beta_1}...p_{k+l}^{\beta_{k+1}} = S.
\end{align*}
It immediately follows by Lemma \ref{lemma5.5} that $\Phi_S(z)$ is a factor of $\Phi_s(z^d)$. By the inductive hypothesis, $\Phi_s(z) | \Phi_r(z^{d^m})$ for some $m$, hence $\Phi_S(z) | \Phi_r(z^{d^{m+1}})$. This completes the inductive step for $S$, and hence the result holds true regardless of what $s(n, d)$ is, and thus for all $n$.

\end{proof}

For any $P(z), Q(z) \in \Z[z]$ with $P(z)$ non-constant, let $\sigma(P(z), Q(z))$ be the largest integer $n$ such that $P(z)^n | Q(z)$. With this, we have the last required Lemma before the main proof.

\begin{Lemma}
\label{lemma5.8}
Fix $P(z) \in \Z[z]$ and $k \in \N$, $k \geq 0$. Then $\sigma(\Phi_k(z), P(z^{d^m}))$ is bounded from above for all $m \in \N$.
\end{Lemma}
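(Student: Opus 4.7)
The plan is to reduce the question about divisibility by $\Phi_k(z)^n$ in $\Q[z]$ to a question about multiplicities of a single root in $\mathbb{C}[z]$, and then exploit the fact that the substitution $z \mapsto z^{d^m}$ is unramified away from $z = 0$. Throughout, $\zeta$ denotes a fixed primitive $k$-th root of unity, so that $\Phi_k(z)$ is the minimal polynomial of $\zeta$ over $\Q$ and all of its roots are simple.

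First I would observe that because $\Phi_k(z)$ is irreducible over $\Q$ with simple roots, and its roots form a single Galois orbit, the exponent $\sigma(\Phi_k(z), P(z^{d^m}))$ equals the multiplicity of $\zeta$ as a root of $P(z^{d^m})$ in $\mathbb{C}[z]$. (Any Galois automorphism permutes the primitive $k$-th roots of unity and fixes $P(z^{d^m}) \in \Q[z]$, so every primitive $k$-th root of unity has the same multiplicity in $P(z^{d^m})$; since $\Phi_k$ has simple roots, that common multiplicity is exactly $\sigma(\Phi_k(z), P(z^{d^m}))$.)

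Next I would factor $P$ over $\mathbb{C}$ as $P(z) = c\prod_{i} (z - \alpha_i)^{s_i}$, so that
\begin{align*}
P(z^{d^m}) = c\prod_i (z^{d^m} - \alpha_i)^{s_i}.
\end{align*}
The multiplicity of $\zeta$ as a root of the factor $z^{d^m} - \alpha_i$ is at most $1$: its derivative $d^m z^{d^m - 1}$ does not vanish at $\zeta$ (which is a nonzero root of unity), so $z^{d^m} - \alpha_i$ has only simple roots, and contributes multiplicity $1$ if $\alpha_i = \zeta^{d^m}$ and multiplicity $0$ otherwise. Summing over $i$, the multiplicity of $\zeta$ in $P(z^{d^m})$ is therefore at most $\max_i s_i$, and hence
\begin{align*}
\sigma(\Phi_k(z), P(z^{d^m})) \le \max_i s_i \le \deg P,
\end{align*}
a bound that depends only on $P$ and is independent of $m$.

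The main conceptual step is the reduction from divisibility of the polynomial $\Phi_k(z)^n$ (in $\Q[z]$) to multiplicity of a single complex root, and the mild technical obstacle is verifying that the substitution $z \mapsto z^{d^m}$ introduces no ramification at $\zeta$; both are handled cleanly once one passes to $\mathbb{C}[z]$ and uses that $\zeta \neq 0$. No tracking of $k$, $d$, or $m$ is needed beyond this, since the multiplicity of any root of the fixed polynomial $P$ is bounded by $\deg P$.
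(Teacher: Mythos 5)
Your proof is correct, and it takes a genuinely different route from the one in the paper. The paper stays inside $\Q[z]$ and the combinatorics of cyclotomic indices: it splits $P(z)=P_r(z)Q(z)$, isolating the cyclotomic factors $\Phi_{rp}(z)^{\alpha(p)}$ whose $r$-value matches $r(k,d)$, shows via Lemma \ref{lemma5.5} that $Q(z^{d^m})$ never meets $\Phi_k(z)$, and then runs an induction on $s(k,d)$ using the factorisation of $\Phi_n(z^d)$ to bound the multiplicity recursively. You instead pass to $\C[z]$, identify $\sigma(\Phi_k(z),P(z^{d^m}))$ with the multiplicity of a fixed primitive $k$-th root of unity $\zeta$ (the Galois-orbit observation gives equality, though for the upper bound the easy inequality $\sigma\le\mathrm{mult}_\zeta$ already suffices), and note that each factor $z^{d^m}-\alpha_i$ contributes multiplicity at most $1$ at $\zeta$ because the derivative $d^m z^{d^m-1}$ does not vanish there; since the $\alpha_i$ are distinct, at most one factor contributes, giving the uniform bound $\max_i s_i\le\deg P$. (One cosmetic point: when $\alpha_i=0$ the factor $z^{d^m}$ does have a multiple root at the origin, so "has only simple roots" is a slight overstatement, but the local statement at $\zeta\ne 0$ is all you use and is correct.) Your argument is shorter, avoids Lemma \ref{lemma5.5} and the $r(\cdot,\cdot)$, $s(\cdot,\cdot)$ machinery entirely, and yields an explicit bound depending only on $\deg P$ and not on $k$ or $d$; the paper's approach buys consistency with the index-tracking framework that the surrounding lemmas (\ref{lemma5.5}, \ref{lemma5.7}) and the proof of Lemma \ref{lemma5.4} are built on, but for this particular statement it is considerably heavier than necessary.
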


\begin{proof}
Write $k = rs$ where $r = r(k, d)$ and $s = s(k, d)$. By definition, $r$ is coprime with $d$ and $\rad(s) | \rad(d)$. Now write $P(z) = P_r(z)Q(z)$ where $P_r(z)$ contains all the roots of $P(z)$ that are roots of unity with some degree $k'$ where $r(k', d) = r$. As the roots of the cyclotomic polynomial $\Phi_n(z)$ are the roots of unity of degree $n$, we can write $P_r(z)$ in the form
\begin{align*}
P_r(z) &= \prod_{\rad(p) | \rad(d)} \Phi_{rp}(z)^{\alpha(p)}.
\end{align*}
Note that for any cyclotomic factor $\Phi_t(z)$ of $\Phi_n(z^d)$, Lemma \ref{lemma5.5} suggests $r(t, d) = r(ns(d, n), d) = r(n, d)$ i.e. the $r$-value is preserved. As the cyclotomic factors $\Phi_t(z)$ of $Q(z)$ have $r(t, d) \neq r$, it follows that this also holds for the factors of $Q(z^{d^m})$, and hence $Q(z^{d^m})$ is always coprime with $\Phi_k(z)$. This means $\sigma(\Phi_k(z), P(z^{d^m})) = \sigma(\Phi_k(z), P_r(z^{d^m}))$.\\
\\
We will now induct on the values of $s$ in a similar manner to the proof of the previous Lemma. If $s = 1$, applying Lemma \ref{lemma5.5} gives
\begin{align*}
P_r(z^{d^m}) &= \prod_{\rad(p)|\rad(d)} \Phi_{rp}(z^{d^m})^{\alpha(p)}\\
&= \prod_{\rad(p)|\rad(d)}\prod_{t|r(d, rp)^m}\Phi_{trps(d, rp)^m}(z)^{\alpha(p)}\\
&= \prod_{\rad(p)|\rad(d)}\prod_{t|r(d, p)^m}\Phi_{trps(d, p)^m}(z)^{\alpha(p)}.
\end{align*}
As $k = r$, the only factor to possibly have an effect is when $t = p = 1$ so $s(d, p) = 1$, hence
\begin{align*}
\sigma(\Phi_k(z), P_r(z^{d^m})) = \sigma(\Phi_r(z), \Phi_r(z)^{\alpha(1)}) = \alpha(1),
\end{align*}
which is clearly bounded independently of $m$. Now fix $S \in \N$ such that $\rad(S) | \rad(d)$, and assume the lemma holds for all $s < S$ for our value of $k$. Using Lemma \ref{lemma5.5} we note the degree of any factor of $\Phi_n(z^d)$ is larger than $n$, thus by simply considering the $\Phi_{rs}(z)$ with $s < S$, we have the upper bound
\begin{align*}
\sigma(\Phi_{rS}(z), P_r(z^{d^m})) \leq \sigma(\Phi_{rS}(z), \Phi_{rS}(z^{d^m})^{\alpha(S)}) + \sum_{s | S, s < S}\sigma(\Phi_{rs}(z), P_r(z^{d^{m-1}})).
\end{align*}
The first term is simply $\alpha(S)$, however the summation term sums over $s < S$, hence by the inductive hypothesis, each term is bounded by some constant independent of $m$, and so must the sum. This forms an upper bound independent of $m$, and finishes the inductive step and the proof.
\end{proof}
With these tools, we are now ready to prove Lemma \ref{lemma5.4} which will conclude this final chapter.

\begin{proof}[Proof of Lemma \ref{lemma5.4}]
Similar to the justification of the cyclotomic case, it is safe for us to assume $A(z)p_{u_n}(z^d) + C(z)q_{u_n}(z^d)$ is always periodic to $B_c(z)$ and

\begin{align*}
r_{g, n, c} = \deg(\gcd(A_c(z), q_{u_n, c}(z^d)).
\end{align*}
We note from Lemma \ref{lemma5.5}, any divisor $\Phi_k(z)$ of $\Phi_n(z^{d^m})$ satisfies $r(k, d) = r(ns(d, n), d) = r(n, d)$. By definition, $q_{u_n, c}(z) |q_{u_0, c}(z^{d^m}) \prod_{k = 0}^{n-1} B_c(z^{d^k})$, hence we can split $q_{u_n, c}(z^d)$ into a product of cyclotomic factors $\Phi_k(z)$ grouped by the value of $r(k, d)$. These factors will not interact later as the r-value is preserved, hence we can split $r_{g, n, c}$ into a sum and $G_{n, c}(z)$ into a product,

\begin{align*}
r_{g, n, c} &= \sum_{r \in \N, \gcd(d, r) = 1} r_{g, n, c, r}\\
G_{n, c}(z) &= \prod_{r \in \N, \gcd(d, r) = 1} G_{n, c, r}(z).
\end{align*}
Furthermore, as $A_c(z)$ has finitely many cyclotomic factors, we have a finite number of these sequences having non-zero terms, hence it is enough for us to show $r_{g, n, c, r}$ is eventually periodic for each $r \in \N, \gcd(d, r) = 1$. We will do this across three cases.\\
\\
\underline{Case 1:} Both $B_{c, r}(z)$, and  $q_{u_0, c, r}(z)$ do not contain the factor $\Phi_r(z)$. Let $\Phi_{rp}(z)$ be some factor of $B_{c, r}(z)$ or $q_{u_0, c, r}(z)$ where $\rad(p) | \rad(d)$ and $p > 1$. Now note by Lemma \ref{lemma5.5}, any factors $\Phi_k(z)$ of $\Phi_{rp}(z^{d^n})$ have $rps(d, rp)^n | k$. Note that $s(d, rp) > 1$ as $\rad(p) | \rad(d)$ and $p > 1$. This means that as $n \to \infty$, we also need $k \to \infty$ for $\Phi_k(z)$ to divide $\Phi_{rp}(z^{d^n})$. It follows that there exists some $n_0 \in \N$ such that for $n \geq n_0$,
\begin{align*}
\gcd(A_{c, r}(z), B_{c, r}(z^{d^n})) = \gcd(A_{c, r}(z), q_{u_0, c, r}(z^{d^n})) = 1.
\end{align*}
This is analogous to the case of non-cyclotomic polynomials, as once we write $q_{u_n, c, r}(z)$ in the form
\begin{align*}
q_{u_n, c, r}(z^d) = q'_{n}(z) \prod_{k=1}^{n}B'_{k, n}(z),
\end{align*}
where $q'_n(z) | q_{u_0, c, r}(z^{d^{n+1}})$ and $B'_{k, n}(z) | B_{c, r}(z^{d^k})$, we find there exists a $n_0 \in \N$ such that for all $n \geq n_0$,
\begin{align*}
r_{g, n, c, r} &= \gcd(A_{c, r}(z), \prod_{k=1}^{n_0}B'_{k, n}(z)).
\end{align*}
Applying a very similar combinatorial argument to that of the non-cyclotomic case will result in this sequence being eventually periodic.\\
\\
\underline{Case 2}: $\Phi_r(z)$ has a factor of $q_{u_0, c, r}(z)$, but not $B_{c, r}(z)$. We may then write
\begin{align*}
A_{r, c}(z) &= \prod_{i = 1}^m \Phi_{rp_i}(z).
\end{align*}
where the $p_i$ are some collection of integers. Let $S$ be the set of divisors of the $p_i$. That is, $S = \{s \in \N :\, s | p_i$ for some $1 \leq i \leq m\}$. Finally, let the elements of $S$ be enumerated as $s_1, s_2, ..., s_N$ and

\begin{align*}
\sigma_{s_i, n} := \sigma(\Phi_{rs_i}(z), q_{u_n, c, r}(z^d)), \quad \Sigma_n := (\sigma_{s_1, n}, ..., \sigma_{s_N, n}).
\end{align*}
As each $p_i \in S$ by definition, we see by definition that $\gcd(A_{r, c}(z), q_{u_n, c, r}(z^d))$ can be computed exactly given $\Sigma_n$, and thus so can $q_{u_{n+1}, c, r}(z^d)$. Additionally, noting that $\Phi_k(z)$ can only be a factor of $\Phi_n(z^d)$ if $k$ is a multiple of $n$, we have $\Sigma_{ n+1}$ entirely dependent on $\Sigma_n$, $A_{c, r}(z)$ and $B_{c, r}(z)$. If we assume that each element of $\Sigma_n$ is bounded independently of $n$, then clearly we must have $\Sigma_{n_0} = \Sigma_{n_1}$ for some $n_0 \neq n_1$ as there are only finitely many values $\Sigma_n$ could take. Due to the dependence on the previous element only, we have $\Sigma_{n_0+1} = \Sigma_{n_1+1}$, and so on, implying $\Sigma_n$ is eventually periodic, and hence so must $\gcd(A_{r, c}(z), q_{u_n, c, r}(z^d))$. It remains to show each element of $\sigma(\Phi_{rs_i}(z), q_{u_n, c, r}(z^d))$ is bounded independently of $n$.\\
\\
We recall
\begin{align*}
q_{u_n, c, r}(z^d) | q_{u_0, c, r}(z^{d^{n+1}})\prod_{t = 1}^n B_{c, r}(z^{d^t}).
\end{align*}
By applying Lemma \ref{lemma5.8}, we have $\sigma(\Phi_{rs_i}(z), q_{u_0, c, r}(z^{d^{n+1}}))$ bounded from above by some constant independent of $n$. Furthermore, by applying the ideas of Case 1 here, as $\Phi_r(z)$ does not divide $B_{r, c}(z)$, there exists a $n_0$ such that $\Phi_{rs_i}(z)$ is coprime with $B_{r, c}(z^{d^n})$ for $n \geq n_0$. Thus,
\begin{align*}
\sigma(\Phi_{rs_i}(z), q_{u_n, c, r}(z^d)) &\leq \sigma(\Phi_{rs_i}(z), q_{u_0, c, r}(z^{d^{n+1}})) + \sum_{t=1}^{n_0} \sigma(\Phi_{rs_i}(z), B_{c, r}(z^{d^t})),
\end{align*}
which is bounded from above independently of $n$ for each $s_i$. This finishes this case.\\
\\
\underline{Case 3:} $\Phi_r(z)$ divides $B_{c, r}(z)$ but not $A_{c, r}(z)$. We perform a similar set up to Case 2, with the same $p_i$, set of divisors $S$ and $\Sigma_n$ vector where $\Sigma_{n+1}$ can be determined using $\Sigma_n$ and $\gcd(A_{c, r}(z), q_{u_n, c, r}(z^d))$ can be computed from the $\Sigma_n$ sequence. Due to the recursive relationship
\begin{align*}
q_{u_{n+1}, c, r}(z) &= \frac{B_{c, r}(z)q_{u_n, c, r}(z^d)}{\gcd(A_{c, r}(z), q_{u_n, c, r}(z^d))},
\end{align*}
the $\gcd$ will not eliminate the $\Phi_r(z)$ factors from $q_{u_n, c, r}(z^d)$ as $A_{c, r}(z)$ does not have any of these factors. As $\Phi_r(z) | \Phi_r(z^d)$, we must have $\sigma(\Phi_r(z), q_{u_n, c, r}(z^d)) \geq n$. When computing $\Sigma_{n+1}$ given $\Sigma_n$, we imagine this as removing the cyclotomic factors of $A_{c, r}(z)$ from each of the corresponding elements in $\Sigma_n$ where possible, adding the cyclotomic factors of $B_{c, r}(z)$ then performing a "redistribution" when computing $q_{u_{n+1}, c, r}(z^d)$ by applying Lemma \ref{lemma5.5} to each factor of $q_{u_{n+1}, c, r}(z)$. As $\sigma(\Phi_r(z), q_{u_n, c, r}(z^d)) \geq n$, the redistribution will cause many elements of $\Sigma_n$ to increase larger than the maximum removal of the cyclotomic factors of $A_{c, r}(z)$, thus we will expect a variety of elements of $\Sigma_n$ to eventually monotonically increase over $n$. Lemma \ref{lemma5.7} suggests that this occurs to all elements in $\Sigma_n$, hence it naturally follows $\gcd(A_{c, r}(z), q_{u_n, c, r}(z^d)) = A_{c, r}(z)$ eventually and $r_{g, n, c, r}$ is eventually constant.\\
\\
One final case we have left out is $B_{c, r}(z)$ and $A_{c, r}(z)$ both contain a factor of $\Phi_r(z)$, however it turns out that this will form a contradiction to our original assumption. Indeed, we have assumed $A(z)p_{u_{{n_0+1}}}(z^d) + C(z)q_{u_{n_0+1}}(z^d)$ is coprime to $B_c(z)$. This implies $q_{u_{n_0+1}}(z^d)$ cannot contain a factor of $\Phi_r(z)$, however we have
\begin{align*}
q_{u_{n_0+1}, c, r}(z^d) &= B_{c, r}(z^d)\frac{q_{u_{n_0}, c, r}(z^{d^2})}{\gcd(A_{c, r}(z^d), q_{u_{n_0}, c, r}(z^{d^2}))}.
\end{align*}
As $B_{c, r}(z)$ contains $\Phi_r(z)$, so does $B_{c, r}(z^d)$ by Lemma \ref{lemma5.5}, 
thus $q_{u_{n_0+1}}(z^d)$ contains a factor of $\Phi_r(z)$. This is a clear contradiction and finishes the proof.



\end{proof}

Although this proof seemingly implies the assumptions of the Lemma will never hold if there exists a $n$ coprime with $d$ such that $\Phi_n(z)$ divides $A(z)$ and $B(z)$, this can be avoided via a transformation similar to that of the powers of $z$ case. Indeed, without loss of generality we may assume
\begin{align*}
f(z) &= \frac{A(z)\Phi_n(z)f(z^d) + C(z)}{B(z)\Phi_n(z)}\\
\implies f(z)\Phi_n(z) &= \frac{A(z)\Phi_n(z)f(z^d) + C(z)}{B(z)}.
\end{align*}
Note that by Lemma \ref{lemma5.5} we have $\Phi_n(z^d) = \prod_{r | d} \Phi_{rn}(z)$, hence

\begin{align*}
f(z)\Phi_n(z) &= \frac{A(z)f(z^d)\Phi_n(z^d) + C(z)\prod_{r | d, r \neq 1}\Phi_{rn}(z)}{B(z)\prod_{r | d, r \neq 1}\Phi_{rn}(z)}.
\end{align*}
It follows $g(z) := f(z)\Phi_n(z)$ is a Mahler function with its associated $A(z)$ and $B(z)$ polynomials containing one fewer factor of $\Phi_n(z)$ each. This may be repeated until either $A(z)$ or $B(z)$ does not contain $\Phi_n(z)$ for any $n$ coprime to $d$. Furthermore, as $\Phi_n(z)$ has integer coefficients, it's easy to see any Mahler number associated with $f(z)$ has the same irrationality exponent as a Mahler number from $g(z)$, thus it's enough to examine $g(z)$. Of course it needs to be checked if the conditions of Lemma \ref{lemma5.4} apply to $g(z)$, however this does not mean the case is lost entirely as our proof of the Lemma suggests. Collating the work in this chapter, we form the following Theorem.

\begin{Theorem}
Let $f(z) \in \Q((z^{-1}))$ be a Mahler function with an associated Mahler number $f(b)$ that satisfies Theorem \ref{thm1.10} and is irrational. Without loss of generality, we may assume $A(z)$ contains no powers of $z$ and there does not exist a $n \in \N$ coprime to $d$ such that $\Phi_n(z)$ divides $A(z)$ and $B(z)$.\\
\\
If $B(z)$ contains non-cyclotomic factors, assume the assumptions of Lemma \ref{lemma4.2} are met. If $B(z)$ contains cyclotomic factors, also assume $A(z)p_{u_n}(z^d) + C(z)q_{u_n}(z^d)$ is eventually coprime to any cyclotomic factor of $B(z)$. Then $\mu(f(b))$ is rational.
\end{Theorem}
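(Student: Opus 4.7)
The plan is to stitch together the case analyses carried out in this chapter. As noted at the start of the chapter, to establish rationality of $\mu(f(b))$ it suffices to show that for every primitive gap sequence $([u_n,v_n])_{n\geq 0}$ contributing to the supremum in the formula from Theorem \ref{thm1.10}, the sequence $r_{g,n} = \deg(G_n(z))$ is eventually periodic. The Bolzano--Weierstrass style computation at the beginning of the chapter then expresses $\limsup_{n\to\infty} v_n/u_n$ as the maximum of finitely many rational limits, and since Lemma \ref{lemma2.3} together with the bounding argument at the end of Chapter 3 ensures that only finitely many primitive gap sequences actually contribute to the supremum, $\mu(f(b))$ itself comes out rational.

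To obtain eventual periodicity of $r_{g,n}$, the first step is the factorisation
\[
B(z)q_{u_n}(z^d) = B_0(z)B_m(z)B_c(z)\,q_{u_n,0}(z^d)q_{u_n,m}(z^d)q_{u_n,c}(z^d),
\]
which decomposes $r_{g,n}$ into $r_{g,n,0} + r_{g,n,m} + r_{g,n,c}$ corresponding to powers of $z$, non-cyclotomic factors, and cyclotomic factors. Since a finite sum of eventually periodic sequences is eventually periodic, I would handle each summand separately. For the powers of $z$, the hypothesis that $A(z)$ has no powers of $z$, which may be assumed without loss of generality via the transformation (\ref{transform}), places us in Case 1 or Case 2 of the earlier discussion, both of which were shown to make $r_{g,n,0}$ eventually constant. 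For the non-cyclotomic part the standing assumption is that the hypotheses of Lemma \ref{lemma4.2} hold, so $A(z)p_{u_n}(z^d) + C(z)q_{u_n}(z^d)$ is eventually coprime to $B_m(z)$; the Euclidean algorithm then reduces $r_{g,n,m}$ to $\deg(\gcd(A(z), q_{u_n,m}(z^d)))$ for large $n$, and Lemma \ref{lemma4.1} combined with the pigeonhole argument on factor choices inside $B_m(z^{d^k})$ gives eventual periodicity. For the cyclotomic part, the hypothesis that $A(z)p_{u_n}(z^d) + C(z)q_{u_n}(z^d)$ is eventually coprime to every cyclotomic factor of $B(z)$ is precisely what is required by Lemma \ref{lemma5.4}, which delivers eventual periodicity of $r_{g,n,c}$.

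Assembling these three conclusions, $r_{g,n}$ is eventually periodic with some common period $P$. Iterating the recursive formulae (\ref{lem2.4u}) and (\ref{lem2.4v}) over a full period, each of the $P$ subsequences $(v_{n_0+i+kP}/u_{n_0+i+kP})_{k\geq 0}$ with $0 \leq i < P$ tends to an explicit rational number determined by $d$, $r_a$, $r_b$, the starting values $u_{n_0+i}, v_{n_0+i}$, and the periodic block of $r_{g,\cdot}$; the $\limsup$ along the primitive gap sequence is then the maximum of these $P$ rationals. Since only finitely many primitive gap sequences contribute to the global supremum, $\mu(f(b))$ arises as the maximum of a finite collection of rational numbers and is therefore rational. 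The genuine obstacle in this argument is the cyclotomic case: unlike the non-cyclotomic case there is no limiting technique at $\infty$ available, since cyclotomic roots lie on the unit circle, which is why the eventual-coprimality hypothesis with $B_c(z)$ has to be imposed rather than derived. Removing this hypothesis appears to require fundamentally new techniques and remains the principal open issue left by this theorem.
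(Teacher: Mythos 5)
Your proposal is correct and follows essentially the same route as the paper: the theorem is stated there as a collation of the chapter's work, namely the decomposition $r_{g,n}=r_{g,n,0}+r_{g,n,m}+r_{g,n,c}$, the powers-of-$z$ cases after the reduction via (\ref{transform}), Lemma \ref{lemma4.2} together with Lemma \ref{lemma4.1} and the pigeonhole argument for the non-cyclotomic part, Lemma \ref{lemma5.4} for the cyclotomic part, and finally the periodicity-to-rationality computation from the start of the chapter combined with the finiteness of contributing primitive gap sequences. Your assembly matches that argument step for step.
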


\chapter{Conclusion and Future Work}
In this essay, we have managed the problems of computing the irrationality exponent of Mahler numbers that originate from Mahler functions satisfying (\ref{Mahler}) and investigating the rationality of these irrationality exponents, extending the work of Badziahin to a slightly more general form.\\
\\
On the topic of computation, the work of Badziahin extends very well once the new $C(z)$ polynomial is accounted for, and due to this, there only a minimal number of new issues introduced with computing the irrationality exponent of these Mahler numbers. Perhaps the greatest barrier in being able to compute these irrationality exponents is that the method we have currently developed relies on the fact that it is possible to find a big gap in $\Phi(f)$, the set of degrees of denominators of $f(z)$. This task is not always achievable, as there may be no big gaps in $\Phi$ resulting in the irrationality exponent of $f(b)$ being 2, however we have not developed the tools to prove if a certain $\Phi$ has no big gaps. This means in practice, if we cannot find any big gaps in $\Phi$, we cannot conclude what the irrationality exponent of any associated Mahler number may be with any rigor.\\
\\
One way to manage this is to apply Lemma \ref{lemma2.4} and Lemma \ref{lemma2.3} to form an upper bound on the irrationality exponent which approaches 2 as more elements of $\Phi(f)$ are computed. This is done by assuming that a primitive gap of maximal size occurs immediately after the last computed elements of $\Phi(f)$. While this can quickly produce an approximation of the irrationality exponent of a Mahler number as accurate as one desires, which may be sufficient for some purposes, an exact value still eludes us. Investigations into analogs problem have been attempted \cite{bugeaud}, however none have succeeded yet as this turns out to be an extremely challenging problem. Regardless, further investigation would certainly be worthwhile.\\
\\
Another path of further investigation is generalising what Mahler functions we are studying beyond that of (\ref{Mahler}). Recall the original definition of a Mahler function from (\ref{oldMahler}). Finding a method to compute the irrationality exponent of all Mahler numbers rather than the subclass we have focused on would be highly valuable, however it is not likely that the work of Badziahin will extend nearly as well when compared to our work here as we have at least quadratic-style relationships for $f(z)$ instead of the much more simple linear style equations we have restricted to in (\ref{Mahler}).\\
\\
In studying if the irrationality exponent is rational, we succeed in finding several conditions for which the irrationality exponent of an irrational Mahler number is rational, however the $C(z) \neq 0$ assumption does not allow us to solve the problem in full generality, and our assumptions are necessary to make any significant progress. While an obvious area of future work is to work to discard these assumptions or loosen them as much as possible, it should be noted that this may not be possible.\\
\\
There is very little evidence behind the motivation of Adamczewski and Rivoal's claim that the irrationality exponent of all automatic numbers being rational, other than the fact that no irrational irrationality exponents have been observed across all examples investigated. For our purposes, given the work by Badziahin and our work, we do expect the irrationality exponent of an irrational Mahler number satisfying (\ref{Mahler}) to be rational, but it would not be too surprising should this not be the case for general Mahler numbers from (\ref{oldMahler}).


\bibliographystyle{plain}
\bibliography{bibliography.bib}

%
%
%
%

\end{document}